\newtheorem{theo}{Theorem}
\newtheorem{prop}[theo]{Proposition}
\newtheorem{coro}[theo]{Corollary}
\newtheorem{defi}[theo]{Definition}
\newtheorem{lemm}[theo]{Lemma}
\newtheorem*{claim*}{Claim}
\newcommand{\Z}{\mathbb{Z}}
\newcommand{\Q}{\mathbb{Q}}
\newcommand{\R}{\mathbb{R}}
\newcommand{\C}{\mathbb{C}}
\newcommand{\N}{\mathbb{N}}
\newcommand{\e}{\mathrm{e}}
\newcommand{\re}{\mathrm{Re}}
\newcommand{\im}{\mathrm{Im}}
\newcommand{\ie}{{\it{i.$\,$e.\ }}}
\newcommand{\Ok}{E_+}
\newcommand{\Sp}{\mathrm{SpVt}}
\newcommand{\varpsilon}{\varepsilon}
\newcommand{\Log}{\mathrm{L}}
\title[Signed  fundamental domains  for  number  fields]{Twisters and signed  fundamental domains   for  number  fields}
\author{Milton Espinoza}
\email{milton.espinoza@uv.cl}
\address{Instituto de Matem\'aticas,
Facultad de Ciencias,
   Universidad de Valpara\'iso,
   Gran Breta\~na 1091, 3er piso, Valpara\'iso,
   Chile}
\author{Eduardo Friedman}
\email{friedman@uchile.cl}
\address{Departamento de Matem\'atica, Facultad de Ciencias,
 Universidad de Chile,\newline
Casilla 653, Santiago, Chile.}
\keywords{Shintani domains, units, fundamental domain,
number fields}
\subjclass[2010]{Primary  11R27, 11Y40, 11R42, Secondary 11R80}
\thanks{Espinoza is grateful for the generous support of CONICYT BECAS CHILE 74150071 and the Max-Planck-Institut f\"ur Mathematik. Friedman thanks
   Chilean FONDECYT grant 1140537  for support.}
\begin{document}
\maketitle

\begin{abstract}
We give  a signed  fundamental  domain for the action  on
$\R^{r_1}_+\times{\C^*}^{r_2}$ of the totally positive units $\Ok$ of
a number field $k$ of degree $n=r_1+2r_2$ which we assume   is not totally complex.  Here  $r_1$ and $r_2$ denote the number of real  and complex places of $k$ and  $\R_+$ denotes the positive real numbers.
 The signed fundamental domain consists of
   $n$-dimensional $k$-rational   cones $C_\alpha$,  each equipped  with a
    sign   $\mu_\alpha=\pm1$, with the property that the
   net number of  intersections of the cones with any $\Ok$-orbit is
1.

The   cones  $C_\alpha$ and the signs  $\mu_\alpha$  are explicitly constructed from any set of fundamental totally  positive units and a  set of $3^{r_2}$ ``twisters," \ie elements of $k$ whose arguments at the $r_2$ complex places of $k$ are sufficiently varied. Introducing  twisters gives us the right number of generators   for the cones $C_\alpha $ and allows us to make the   $C_\alpha$ turn in a controlled  way around the origin at each complex embedding.

\end{abstract}

\vspace*{6pt}\tableofcontents

\section{Introduction}\label{Introduction}
The usual embedding of a number field $k$  into a Euclidean space  $V$
gives rise to an action    of  the
  units  of $k$ on $V$.  Good  fundamental domains for this action are important in the study of abelian $L$-functions. For totally real fields,  Shintani \cite{Sh1} showed in 1976    the existence of  fundamental domains consisting of a finite number of $k$-rational polyhedral cones, now known as Shintani cones. A few years later,
 in a posthumous and rarely cited  work \cite{Sh2}, Shintani extended this to all number fields.

 Shintani's papers  gave no practical procedure to construct his cones, or  even to estimate how many cones were needed.
In the late 1980's Colmez \cite{Co} showed for totally real fields the existence of certain special subgroups   of the  units for which he could explicitly construct  Shintani-type fundamental domains for the action of this subgroup on $V$. This was a significant theoretical advance, but it was not effective since no practical procedure
is known for producing Colmez's special  units, except in the quadratic or cubic case \cite{DF1}.

A few years ago Charollois, Dasgupta and Greenberg \cite{CDG}, and independently Diaz y Diaz and Friedman \cite{DF2},  found a way around this non-effectiveness for totally real fields by introducing signed fundamental domains.  Espinoza \cite{Es} then found effective signed fundamental domains for number fields having exactly one   complex place.

Signed fundamental domains can be naturally defined if one defines   fundamental domains using characteristic functions. Indeed, a set of subsets $\{C_\alpha\}_{\alpha\in J} $ of a space $Y$, on which a countable group $G$ acts, is a fundamental domain if
$$
\sum_{\alpha\in J}\sum_{\varepsilon\in G}\chi_{C_\alpha}^{\phantom{-1}}(\varepsilon\cdot y)=1\qquad\qquad(\forall y\in Y),
$$
where $\chi_{C_\alpha}^{\phantom{-1}}$ is the characteristic function of $C_\alpha$. A signed fundamental domain  is a finite list   $\{C_\alpha\}_{\alpha\in J}$ of subsets of $ Y $
and a corresponding list of signs $\mu_\alpha=\pm1$ such that
$$
\sum_{\alpha\in J}\mu_\alpha\sum_{\varepsilon\in G}\chi_{C_\alpha}(\varepsilon\cdot y)=1\qquad\qquad(\forall y\in Y).
$$
For technical reasons \cite[Lemma 5]{DF2}, we also require that the cardinality of $C_\alpha\cap(G\cdot y) $ be bounded independently of $y\in Y$.

 Before going into details, we outline the difficulties that appear with Shintani cones when  $k$ is not  totally real. Let $k$ be a number field of degree $n=r_1+2r_2$ having $r_1$ real places and $r_2$ pairs of complex conjugate embeddings. The most obvious problem is that we no longer have $n$ natural generators for the $n$-dimensional Shintani cones. In the totally real case, following Colmez,  we can use $1\in k$ and $n-1$ independent  units of $k$ to generate the cones, but each complex case leaves us a unit short of the $n$ generators that we need. Thus,  we must introduce one new generator for each complex place. An additional difficulty is that $ \R_+^{r_1}\times{\C^*}^{r_2}$, while it is a cone,  is not convex if $r_2>0$. This means that $n$ independent vectors are liable to generate a cone  including  (nonzero) points outside  $ \R_+^{r_1}\times{\C^*}^{r_2}$, for example where one complex coordinate  vanishes.
We choose  the new   generators   for the cones    so as to force all of them inside convex subsets of $ \R_+^{r_1}\times{\C^*}^{r_2}$. These   generators we call twisters, as their arguments at the complex places are chosen so as to twist the generators into a convex conical  sector.

The approach to signed fundamental domains in \cite{DF2} \cite{Es}  and here can be summarized in the following commutative diagram of topological spaces
\begin{equation}\label{doblediagram} 
\begin{CD}
\mathfrak{X}\times\R @>g  >f  > \R_+^{r_1}\times{\C^*}^{r_2}\\
@VV\widehat{\pi}V @VV\pi V\\
\widehat{T} @>G >F > T
\end{CD}  .
\end{equation}
We begin by explaining the    vertical arrow on the right. Suppose we are given totally positive independent units $\varepsilon_1,....,\varepsilon_r$ generating a subgroup $\mathcal{E}$ of finite index in  the units of $k$. Thus $\mathcal{E}$ acts on
$ \R_+^{r_1}\times{\C^*}^{r_2}$, where $\R_+$ denotes the multiplicative group of positive real numbers. Let $T= (\R_+^{r_1}\times{\C^*}^{r_2})/\mathcal{E}$ be the quotient manifold and $\pi:\R_+^{r_1}\times{\C^*}^{r_2}\to T$ the natural  map.

It is well-known that $T$ is homeomorphic to the product  of an $(n-1)$-torus with $\R$. This is made explicit by the
left vertical arrow in \eqref{doblediagram} and the maps $g$ and $G$, in the following sense. The standard (additive) model for $T$ is $\widehat{T}:=(\R^{n-1}/\Lambda)\times\R$, where  $ \Lambda\subset\R^{n-1}$ is a lattice of dimension $n-1$. We let  $\widehat{\pi}:\R^{n-1}\times\R\to \widehat{T}$ be the natural quotient map and $g:\R^{n-1}\times\R\to   \R_+^{r_1}\times{\C^*}^{r_2}$ the group homomorphism (``exponential") which induces a homeomorphism $G:\widehat{T}\to T$   of the quotient manifolds, \ie $ G\circ\widehat{\pi}=\pi\circ g$. Now let $\mathfrak{X}$ be a fundamental domain for the action of $\Lambda$ on $\R^{n-1}$ and restrict $g$ and $\widehat{\pi}$ to $\mathfrak{X}\times\R$. This defines everything in  \eqref{doblediagram}, except the interesting part, namely $f$ and $F$.

We obtain a classical fundamental domain $\mathcal{F}$ (used by Hecke and Landau, for example)  for the action of
$\mathcal{E}$ on $\R_+^{r_1}\times{\C^*}^{r_2}$ by letting $\mathcal{F}:=g(\mathfrak{X}\times\R)$. Unfortunately,  $\mathcal{F}$  is of limited  use  because of its complicated geometry. In particular, it is difficult to describe the   intersection of $\mathcal{F}$ with  fractional ideals of $k$, a vital step  in Shintani's treatment of $L$-functions.

 To remedy this, for totally real fields Colmez  deformed $g$ to a new and simpler function $f$ so that its image  $f(\mathfrak{X}\times\R)$ is a union of $k$-rational polyhedral cones. From our present standpoint, the work of Colmez \cite{Co} can be described as follows.
Take    the fundamental domain $\mathfrak{X}=[0,1]^{n-1}$ for the lattice $\Lambda:=\Z^{n-1}\subset\R^{n-1}$. There is a well-known decomposition,  parametrized by the symmetric group $S_{n-1}$,
$$
\mathfrak{X}=\bigcup_{\alpha\in S_{n-1}}\mathfrak{X}_\alpha
$$
of the hypercube into simplices $\mathfrak{X}_\alpha$ \cite[eq. (19)]{DF2}.
For  each $\alpha\in S_{n-1}$,   let $A_\alpha:\mathfrak{X}_\alpha\to \R_+^{n}$  be the unique affine function which on each vertex $\kappa$ of $\mathfrak{X}_\alpha$ takes the value $g(\kappa\times 0)\in\R_+^{n}$ (recall that Colmez only dealt with the totally real case). Then we can unambiguously define $f:\mathfrak{X}\times\R\to \R_+^{n}$  by
$ 
f(x\times t):=\e^t A_\alpha(x)$ where $x\in\mathfrak{X}_\alpha,\ t\in\R$. 
As  $\R_+^{n}$  is convex,    $f$ takes values in $\R_+^{n}$ and  $f(\mathfrak{X}_\alpha\times\R)$ is the cone  $\overline{C}_\alpha$ generated by the    $g(\kappa\times 0)$ as $\kappa$ ranges over the vertices of $\mathfrak{X}_\alpha$. Colmez  \cite{Co} proved that if the cones  $\overline{C}_\alpha$ meet only along common faces, then   $\bigcup_\alpha   C_\alpha$ is a fundamental domain for the action of $\mathcal{E}$ on $\R_+^{n}$. Here  $C_\alpha$ is $\overline{C}_\alpha$ minus  some boundary faces.

Colmez's proof is rather complicated, but it can be greatly simplified \cite{DF2} by observing that the decomposition  $\mathfrak{X}=\bigcup_\alpha\mathfrak{X}_\alpha$ is good enough that  $f$   induces a map $F:\widehat{T}\to T$ of the quotient manifolds  in \eqref{doblediagram}.  One can then use topological degree theory  to show that $\bigcup_\alpha C_\alpha$ is  a signed fundamental domain with weights $\mu_\alpha$ given by the degree of $f$ restricted to $\mathfrak{X}_\alpha\times\R$. If we add Colmez's hypothesis that the  $\overline{C}_\alpha$ meet only along common faces, then $\mu_\alpha=1$ for all $\alpha\in S_{n-1}$ and the signed fundamental domain is a true one.

When $k$ is not totally real we would like to have a similar construction. The topology is unchanged, as the manifolds $\widehat{T}$ and $T$ in \eqref{doblediagram} are still homeomorphic to the product of an $(n-1)$-torus with $\R$. It is also easy to write down the homomorphism $g$ inducing the homeomorphism $G$ in \eqref{doblediagram}. However, as we remarked above,  the cone $\R_+^{r_1}\times{\C^*}^{r_2}$ is not convex, so to define $f$ we must ensure that the generators of the cones lie in convex subsets of $\R_+^{r_1}\times{\C^*}^{r_2}$.    Moreover, the decomposition $\mathfrak{X}=\bigcup_\alpha\mathfrak{X}_\alpha$ must again allow  $f$ to   descend  to a map $F$ of the quotient manifolds.

An additional difficulty is that the natural choice of $g$ does not give cone generators in $k$, as in general  $g(\kappa\times 0)\notin k$ for $\kappa $ a  vertex of $\mathfrak{X}_\alpha$. We fix this by modifying $g$ slightly. Fortunately, the homotopy involved in formalizing this approximation  does not alter topological degrees, and so this turns out to be a minor difficulty.

Espinoza \cite{Es} obtained a signed fundamental domain   for fields $k$ with exactly one pair of complex embeddings.
In this paper we extend the ideas there  to all non-totally complex number fields. We exclude totally complex fields solely because in this case we have not been able to give a satisfactory description of the boundary faces  that should be included in the signed fundamental domain.    The reader will find in the next section a detailed description of the signed fundamental domain obtained.

As mentioned at the beginning of this paper, signed fundamental domains are useful for working with abelian $L$-functions. More precisely, as in \cite[Cor.\ 6]{DF2} and \cite[Cor. 3]{Es}, using a signed fundamental domain one can explicitly write any  abelian $L$-function as a finite linear combination of Shintani zeta functions \cite{Sh1}. The Shintani functions do not in general satisfy a functional equation of the usual kind, but they do satisfy a ladder of difference equations and are normalized by vanishing integrals \cite[eqs.\ 1.4 and 1.7]{FR}. In a more geometric vein, in a forthcoming doctoral thesis Alex Capu\~nay gives a practical algorithm  producing a true $k$-rational fundamental cone domain starting from a signed one.

\section{Signed fundamental domain}\label{SFD}
We fix a number field $k$ of degree $[k:\Q]=n$, having $r_1$ real embeddings $\tau_1,...,\tau_{r_1}$ and $r_2$ pairs of conjugate complex embeddings 
\begin{align*}
\tau_{r_1+1},\overline{\tau}_{r_1+1},...,\tau_{r_1+r_2},\overline{\tau}_{r_1+r_2},
\end{align*}
which we use to embed $k$ in $\R^{r_1}\times \C^{r_2}$ by $\gamma\to(\gamma^{(i)})_i$ where $\gamma^{(i)}:=\tau_i(\gamma)\ \,(1\le i\le r_1+r_2)$. To save notation, we identify $\gamma\in k$ with its image  $(\gamma^{(i)})_i\in \R^{r_1}\times \C^{r_2}$.
We also fix  independent totally positive units $\varepsilon_1,...,\varepsilon_r$ of $k$, where $r=r_1+r_2-1$,
and assume we have chosen an integer $N_j\ge3\ \, (1\le j\le r_2)$
for each complex embedding. To have the smallest number of cones we should take $N_j=3$ for all $j$.

\subsection{Raising the dimension of a  complex}\label{RaisingDim}
   An ordered $p$-complex $X$ for us will be a decomposition
\begin{equation}\label{DefComplex}
 X:=\bigcup_{\alpha\in  \mathfrak{I}}X_{\alpha,\prec_\alpha},
\end{equation}
where the $  X_{\alpha,\prec_\alpha}\subset V$ are ordered $p$-dimensional simplices in a $p$-dimensional real vector space $V$.  Thus, for each $\alpha$ in the (possibly infinite) index set $\mathfrak{I} $  we are given  the set of vertices  Vt$(X_\alpha)$ of the $p$-simplex $X_\alpha $ and a total ordering $\prec_\alpha$ on Vt$(X_\alpha)$.
We will later want our complexes to satisfy quite a few properties, but the above definition will do  to identify a  complex. We shall be loose with the notation and denote by $X$ both the underlying point set and  the complex, \ie its decomposition \eqref{DefComplex}. Similarly, we shall  often write $X_\alpha$ for both the simplex and the ordered simplex $X_{\alpha,\prec_\alpha}$.

Our purpose in this subsection is to construct a new ordered $(p+1)$-complex
\begin{equation}\label{InductiveY}
Y=Y(X,\omega, [M_1,M_2]):=\bigcup_{\gamma\in J} Y_{\gamma,\prec_\gamma}\subset V\times\R
\end{equation}
from a given  $p$-complex  $X=\bigcup_{\alpha\in  \mathfrak{I}} X_{\alpha,\prec_\alpha}\subset V$, a linear function $\omega:V\to\R$ and an interval $[M_1,M_2]$. Here the $M_i$ are integers and  $M_1<M_2$. In our application $\omega$ will either vanish identically or be determined by the arguments of the units $\varepsilon_i$ at some complex embedding.   The new ambient vector space is $V\times\R$, and the new index set is
\begin{equation}\label{InductiveJ}
J:=\big\{(\alpha,v,\ell)\big|\,\alpha\in  \mathfrak{I},\ v\in \mathrm{Vt}(X_\alpha),\ \ell\in\Z,\ M_1\le\ell< M_2  \big\}.
\end{equation}
If $\mathfrak{I}$ is a finite set, then so is $J$ and its cardinality   is
\begin{equation}\label{InductiveCard}
|J|=(M_2-M_1)(p+1)| \mathfrak{I}|.
\end{equation}

To define the simplex $Y_\gamma$   for $\gamma=(\alpha,v,\ell)\in J$ requires  some preliminaries. Let $A:V\to(0,1]$ be the  upper fractional part of $\omega$, \ie $A(u)\in\R$ is uniquely determined by
\begin{equation}\label{Avchar0}
 A(u)-\omega(u) \in\Z,\qquad0<A(u) \le 1 \qquad\qquad(u\in V).
  \end{equation}
 We use $A$ to define a new  total order $\prec_\alpha^A$ on  $\mathrm{Vt}(X_\alpha)$ by
  \begin{equation}\label{SalphaA0}
u\prec_\alpha^A u^\prime\ \Longleftrightarrow\
\Big[
\big[A(u)<A(u^\prime)\big]\ \ \mathrm{or}\ \ \big[A(u)=A(u^\prime)\ \mathrm{and}\  u \prec_\alpha u^\prime\big] \Big] 
  \end{equation}
for $u,u^\prime\in  \mathrm{Vt}(X_\alpha)$. 
Note that if $\omega=0$ identically, then   $ \prec_\alpha^A\ =\ \prec_\alpha$.

We  now define $ Y_\gamma $ in \eqref{InductiveY} for $\gamma=(\alpha,v,\ell)\in J$. Order the vertices $\{v_0,v_1,...,v_p\}$ of  $ X_\alpha$ so that
$  v_0\prec_\alpha^A v_1\prec_\alpha^A\cdots \prec_\alpha^A v_p.$ Since $v\in \mathrm{Vt}(X_\alpha)$ by definition \eqref{InductiveJ}, there is a unique $j=j_\gamma$ such that $v=v_j$. Note that $j$ depends on the order $\prec_\alpha^A$. Let $[s_1,...,s_t] $ denote the  convex hull of   $s_1,...,s_t$.
Then $Y_\gamma\subset V\times\R$ is defined as
\begin{align}
\label{Newvertices}Y_\gamma:=& \big[v_0\times\big(A(v_0)-\omega(v_0)+\ell \big),\ v_1\times\big(A(v_1)-\omega(v_1)+\ell \big), ..., \\ &v_j\times\big( A(v_j)-\omega(v_j)+\ell \big), v_j\times\big(A(v_j)-\omega(v_j) +\ell-1\big),\nonumber \\ & v_{j+1}\times\big(A(v_{j+1}\big)-\omega(v_{j+1}) +\ell-1\big),...,v_p\times\big(A(v_p)-\omega(v_p) +\ell-1\big)\big].\nonumber
\end{align}
Note that   vertices   of $Y_\gamma$ map  to
  vertices of $X_\alpha$  by the projection $\pi_V:V\times\R\to V$, and that the coordinate we have added to each vertex is an integer $\big($see \eqref{Avchar0}$\big)$.

Finally,  the new ordering $\prec_\gamma$ on the vertices of the new simplex  $ Y_\gamma$ is defined by
\begin{align}
&\label{gammaOrder}\rho \prec_\gamma \rho^\prime \\ &\Longleftrightarrow 
\Big[
\big[ \pi_V(\rho)\prec_\alpha   \pi_V(\rho^\prime)\big]\ \ \mathrm{or}\ \ \big[\pi_V(\rho)=   \pi_V(\rho^\prime)\ \mathrm{and}\ \pi_\R(\rho^\prime) <   \pi_\R(\rho) \big] \Big],\nonumber
  \end{align}
where $\pi_\R:V\times\R\to\R$ is the projection onto the second factor.
Note that   $\prec_\gamma$ is defined using the original   order $\prec_\alpha$ on the vertices of $X_\alpha$, even though we used  the modified order $\prec_\alpha^A$ to construct $Y_\gamma$.

\subsection{The $(n-1)$-complex $\mathfrak{X}$}\label{ComplexX}   We start from the  trivial $0$-complex
\begin{equation}\label{X0}
X_0:=X_{O,\prec_O}\subset\R^0,
  \end{equation}
where $\R^0:=\{0\}$ is the trivial vector space, the index set is $\{O\}$ (or any one-element set), and  the ordered 0-simplex $X_{O,\prec_O}=\{0\}$ is equipped with the trivial (empty)  ordering $\prec_O$ on the single vertex $0$.
 For $1\le j\le r:=r_1+r_2-1$, let $\omega_{j-1}:\R^{j-1}\to\R$ be the trivial linear function $\omega_{j-1}(x):=0$, and define
\begin{equation*}%\label{Xj}
X_j:=Y(X_{j-1},\omega_{j-1},[0,1])\subset\R^j\qquad\qquad(1\le j\le r),
  \end{equation*}
where $Y$ was defined in \eqref{InductiveY}.\footnote{\ Although we will not need this, $X_r$  is  the well-known decomposition of the $r$-cube $[0,1]^r$ into $r!$ simplices determined by the order of the coordinates \cite[eq.\ (19)]{DF2}.}
If $k$ is totally real, so $r=n-1$,  we are done constructing $\mathfrak{X}:= X_{n-1}$. Otherwise, we must carry out  further  steps, one for each complex place of $k$. For $1\le j\le r_2$, let $\omega_{r+j-1}:\R^{r+j-1}\to\R$   be given by
\begin{equation*}%\label{omegadefined}
\omega_{r+j-1}(x):=\frac{N_j}{2\pi}\sum_{\ell=1}^r x[\ell]\arg\!\big( \varpsilon_\ell^{(r_1+j)}\big) \qquad\big( x=(x[1],x[2],...,x[r+j-1]) \big),
  \end{equation*}
where we recall that for each $1\le j\le r_2$ we have fixed an integer $N_j\ge3$.
For the sake of definiteness, the arguments above are taken so that
%\begin{equation}\label{logvarepsilon}
$-\pi<  \arg(z) \le\pi$,
% \end{equation}
but   any branch  would do as well.
Define
\begin{equation}\label{rjcomplexdefined}
X_{r+j}:=Y(X_{r+j-1},\omega_{r+j-1},[0,N_j])\subset\R^{r+j} \qquad\qquad(1\le j\le r_2),
  \end{equation}
 with $Y$ as in \S\ref{RaisingDim}.

We have thus constructed an $(n-1)$ ordered complex
\begin{equation}\label{neededcomplex}
\mathfrak{X}=\bigcup_{\alpha\in\mathfrak{I}} \mathfrak{X}_\alpha:=X_{n-1}\subset \R^{n-1} \qquad\qquad  \Big( | \mathfrak{I}|=(n-1)!\prod_{j=1}^{r_2}N_j\Big),
  \end{equation}
 corresponding to $j=r_2$ above.
We have dropped the order $\prec_\alpha$ on $\mathrm{Vt}(\mathfrak{X}_\alpha)$ from our notation since, once $\mathfrak{X}$ is constructed  we will have no further interest in  ordering the set of  vertices
\begin{equation}\label{neededsimplices}
\mathrm{Vt}(\mathfrak{X}_\alpha):=\{v_{0,\alpha},v_{1,\alpha},...,v_{n-1,\alpha}\}\subset\Z^{n-1}.
  \end{equation}
However,  we will use the fact the vertices  $ v_{\ell,\alpha}$ have integral coordinates. This is is clear since we started from the single vertex $0$ of $X_0$. As we  noted after \eqref{Newvertices}, in passing from $X_{j-1}$ to $X_j$ the new coordinate appended to a vertex   is always integral.

\subsection{The twister function $\beta$}\label{Twistersdefined}
Recall that we have fixed  $r:=r_1+r_2-1$ independent totally positive units $\varepsilon_1,...,\varepsilon_r$ in the number field $k$ and integers $N_j\ge3$ for $1\le j\le r_2$.  We must now also fix ``twisters" $\beta(x)$, which are totally positive elements of $k^*$ whose arguments at the various complex places are sufficiently close to those of certain roots of unity determined by $x$. More precisely, choose a function $ \beta:\Z^{n-1}\to k^*$ with the following properties:

 \noindent$\bullet$  $\beta(x)$ is a  totally positive element of $k$.

 \noindent$\bullet$ For   $x=(x[1],...,x[n-1])\in\Z^{n-1}$ and      $1\le j\le r_2$, there is a $t_j(x)\in\R$ such that
\begin{equation}\label{beta0}
\frac{|t_j(x)|}{N_j}<\frac{1}{4} -\frac{1}{2N_j} ,\quad \frac{\beta(x)^{(j+r_1)}}{|\beta(x)^{(j+r_1)}|}=\exp\!\Big( 2\pi i\big(x[j+r]+t_j(x) \big) /N_j\Big).
\end{equation}
For example, if $N_j=3$ this requires that,  at the $(j+r_1)$-th embedding, the argument of $\beta(x)$ be no farther than $\pi/6$ from that of   $\exp(2\pi ix[j+r]/3)$.

\noindent$\bullet$ We have
 \begin{equation}\label{periodic}
  \beta(x+ \lambda )=\beta(x)\qquad(x\in\Z^{n-1},\ \lambda \in \Lambda :=\Z^r\times N_1\Z\times\cdots\times N_{r_2}\Z).
 \end{equation}
 Thus, the twister   $\beta$ amounts to a function on the  finite set  $\Z^{n-1}/ \Lambda$.       Note that  in \eqref{beta0} we can take $t_j(x+\lambda )=t_j(x) $.
Twister functions exist by our assumption  $N_j\ge 3 $ and the density of  $k$  in $\R^{r_1}\times\C^{r_2}$.\footnote{\ $\beta(x)$ can be computed by taking a $\Q$-basis $\gamma_1,...,\gamma_n$ of $k\subset \R^{r_1}\times\C^{r_2}$, finding for each $x\in\Z^{n-1}$ with $ 0\le x[j+r_1]<N_j  \ \,(1\le j\le r_2 )$, rational solutions $a_\ell=a_\ell(x)\ \,(1\le\ell\le n)$  to the approximate equality
 $$
\big(1,...,1,\exp(2\pi i x[r_1+1]/N_1),...,\exp(2\pi i x[n-1]/N_{r_2})\big)\ \approx\ \sum_{\ell=1}^n a_\ell \gamma_\ell,
$$
 and letting $\beta(x):=\sum_{\ell=1}^n a_\ell \gamma_\ell$. Then extend $\beta$  to all $x\in\Z^{n-1}$ by periodicity  \eqref{periodic}.}

\subsection{Signs} We  need to compute  three determinants to define the sign
\begin{equation}\label{mualphadefined}
\mu_\alpha:=(-1)^{r_2(r_2-1)/2}\,\mathrm{sign}\big(\!\det(R)\det(V_\alpha)\det(W_\alpha)\big).
\end{equation}
Although $R$ and $V_\alpha$ will prove to be invertible matrices,  $W_\alpha$ may not be. Thus,  $\mu_\alpha$ may take the values $\pm 1 $ or 0.  

Recall that we have listed the embeddings $\tau_i$ of $k$ so that they are real for $1\le i\le r_1$ and  are complex conjugate pairs $\tau_i,\bar\tau_i$ for $r_1<i\le r_1+r_2$. The $(r_1+r_2)\times (r_1+r_2)$ matrix $R=\big( r_{i j}\big)$ is
defined by
\begin{equation}\label{Reg1}
  r_{i j}  :=\begin{cases}
 1 &\ \mathrm{if}\ i=1,\\
 \log|\varepsilon_{i-1}^{(j)}| &\ \mathrm{if}\ 2\le i\le r_1+r_2.
 \end{cases}\quad\quad   (1\le i,j\le r_1+r_2).
\end{equation}
Actually, $|\!\det(R)|=  2^{-r_2}n\,\text{Reg}(\varepsilon_1,\ldots,\varepsilon_r)$, where Reg is the regulator of the units $\varepsilon_i$, but we   care here  only for the sign of $\det(R)$.

Define $V_\alpha$ as the  $(n-1)\times(n-1)$ matrix whose $i$-th column   is  $v_{i,\alpha}-v_{0,\alpha}\in\Z^{n-1}\ \,(1\le i\le n-1)$. Here $\mathrm{Vt}(\mathfrak{X}_\alpha)=\{ v_{0,\alpha},v_{1,\alpha},\ldots, v_{n-1,\alpha} \}\subset\Z^{n-1}$ was given in \eqref{neededcomplex} with the complex $\mathfrak{X}$.
To define the matrix $W_\alpha$ in \eqref{mualphadefined},   let
\begin{align}\label{generator}
w_\ell=w_{\ell,\alpha}:=\beta(v_{\ell,\alpha})&\prod_{j=1}^{r}\varepsilon_j^{v_{\ell,\alpha}[j]} \qquad
(0\le\ell\le n-1).
\end{align}
Thus $w_\ell \in k^*$.
 Define $W_\alpha$ as the real $n\times n$ matrix with $(\ell+1)$-th row 
\begin{align*}
&\tau_1(w_\ell),\tau_2(w_\ell),...,\tau_{r_1}(w_\ell), \\
&\qquad\qquad\qquad \re\big(\tau_{r_1+1}(w_\ell)\big),\im\big(\tau_{r_1+1}(w_\ell)\big),...,\re\big(\tau_{r_1+r_2}(w_\ell)\big),\im\!\big(\tau_{r_1+r_2}(w_\ell)\big)
\end{align*}
for $0\le \ell\le n-1$.

\subsection{Cones}
For $\alpha\in  \mathfrak{I}$ as in \eqref{neededcomplex}, let
\begin{equation}\label{closedcone}
 \overline{C}_\alpha:= \Big\{\sum_{\ell=0}^{n-1}x_\ell w_{\ell,\alpha}\big|\,\, x_\ell\ge0 \text{ for }0\le\ell\le n-1\Big\}-\{0\},
\end{equation}
where $ w_{\ell,\alpha}\in k\subset\R^{r_1}\times\C^{r_2}$ was defined in \eqref{generator}.\footnote{\ Note that we removed the origin in \eqref{closedcone}. A great part of our efforts will be directed to showing that $\overline{C}_\alpha$ is contained in $\R_+^{r_1}\times{\C^*}^{r_2}$. Here the difficulty is in ensuring that the complex components do not vanish.}  
If $w_{0,\alpha},...,w_{n-1,\alpha}$ is not an $\R$-basis of the real vector space $\R^{r_1}\times\C^{r_2}$, \ie if $\mu_\alpha=0$, we will  not   define  the subset $C_\alpha$ of $\overline{C}_\alpha$. If $\mu_\alpha\not=0$,  we can write $e_1=(1,0,...,0)\in \R^{r_1}\times\C^{r_2}$ uniquely as
$ e_1 =\sum_{\ell=0}^{n-1} y_{\ell,\alpha} w_{\ell,\alpha}, $ where $y_{\ell,\alpha}\in\R$.
We shall prove (see Lemma \ref{e1})  that
 $y_{\ell,\alpha} \not=0$ for $0\le\ell\le n-1$.
Define the cones
\begin{equation}\label{cone}
 C_\alpha:= \Big\{\sum_{\ell=0}^{n-1}x_\ell w_{\ell,\alpha}\big|\, x_\ell\in R_{\ell,\alpha}   \Big\},
\quad
R_{\ell,\alpha}:=\begin{cases}
[0,\infty)\  &\text{if } y_{\ell,\alpha}>0,\\
(0,\infty)\  &\text{if } y_{\ell,\alpha}<0
\end{cases}.
\end{equation}
Note that  $C_\alpha $ is the open $n$-dimensional cone generated by the $w_{\ell,\alpha}$, together with some of its boundary faces.

 We can now state our  main result.
\begin{theo}\label{Maintheorem} Let independent totally positive units $\varepsilon_1,...,\varepsilon_r$ and  a  twister funtion  $\beta$  be given as in \S$\mathrm{\ref{Twistersdefined}}$ for a number field $k$,  let $E:=\langle \varepsilon_1,...,\varepsilon_r\rangle$ be the subgroup of the units of $k$ generated by the $\varepsilon_\ell$, and assume that $k$ is not totally complex.   Then    $\{C_\alpha,\mu_\alpha\}_{\substack{\alpha\in  \mathfrak{I}\\ \mu_\alpha\not=0}}$, defined in  \eqref{cone},  \eqref{mualphadefined} and \eqref{neededcomplex}, is a signed fundamental domain for the action of $E$ on  $\R_+^{r_1}\times{\C^*}^{r_2}$ consisting of $k$-rational signed cones.

 Thus, the cones  $C_\alpha$ have generators $w_{\ell,\alpha}\in k$   defined in \eqref{generator}, and for any $x\in \R_+^{r_1}\times{\C^*}^{r_2}$ we have
\begin{equation}\label{BasicCount}
\sum_{\substack{\alpha\in  \mathfrak{I}\\ \mu_\alpha\not=0}}\mu_\alpha\sum_{\varepsilon\in E}\chi_{C_\alpha}^{\phantom{-1}}(\varepsilon x)=1,
\end{equation}
where  $\chi_{C_\alpha}^{\phantom{-1}}$ is the characteristic function of $C_\alpha$. Furthermore, $\chi_{C_\alpha}^{\phantom{-1}}(\varepsilon x)=0$ except for $\varepsilon$ in a finite set  of cardinality bounded independently of $x$.
\end{theo}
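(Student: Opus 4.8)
The plan is to establish the counting identity \eqref{BasicCount} via the topological-degree machinery encoded in the commutative diagram \eqref{doblediagram}. First I would make the right vertical arrow precise: the exponential map $g:\R^{n-1}\times\R\to\R_+^{r_1}\times{\C^*}^{r_2}$ must be chosen so that, composed with the quotient $\pi$ by $E$, it descends through the lattice $\Lambda=\Z^r\times N_1\Z\times\cdots\times N_{r_2}\Z$ to a homeomorphism $G$. The presence of the integers $N_j$ in $\Lambda$ is exactly what forces $g$ to wind $N_j$ times around the origin at the $j$-th complex place as one traverses a period, and this is why the complex $\mathfrak X$ was built with $[0,N_j]$ in \eqref{rjcomplexdefined} rather than $[0,1]$. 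I would then define $f:\mathfrak X\times\R\to\R_+^{r_1}\times{\C^*}^{r_2}$ to be the ``piecewise-conical'' map that is linear on each $\mathfrak X_\alpha\times\{t\}$-cone and sends the $n$ vertices of $\mathfrak X_\alpha\times\{0\}$ to the generators $w_{\ell,\alpha}$ of \eqref{generator}, scaled by $\e^t$ in the extra $\R$-direction. The sign $\mu_\alpha$ of \eqref{mualphadefined} is, up to the fixed normalizing factor $(-1)^{r_2(r_2-1)/2}$ and the constant sign of $\det(R)$, precisely the local degree of $f$ on $\mathfrak X_\alpha\times\R$: $\det(V_\alpha)$ accounts for the orientation of the simplex $\mathfrak X_\alpha$ inside $\R^{n-1}$ and $\det(W_\alpha)$ for the orientation of the image cone.

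Next I would verify the three hypotheses needed to run the degree argument. (i) The image of $f$ actually lies in $\R_+^{r_1}\times{\C^*}^{r_2}$, i.e.\ the closed cones $\overline C_\alpha$ of \eqref{closedcone} avoid the locus where some complex coordinate vanishes; this is where the twister estimate \eqref{beta0} is used, since the arguments of the generators $w_{\ell,\alpha}$ at the $j$-th complex place are pinned near $2\pi(v_{\ell,\alpha}[j+r]+t_j)/N_j$ with $|t_j|/N_j<1/4-1/(2N_j)$, so that within each simplex $\mathfrak X_\alpha$ the relevant arguments span an arc strictly shorter than $\pi$ and the cone stays in an open convex half-space-like sector. (ii) The map $f$ descends to a continuous $F:\widehat T\to T$ of the quotient manifolds; this requires that the decomposition $\mathfrak X=\bigcup_\alpha\mathfrak X_\alpha$ together with the vertex-ordering data be ``$\Lambda$-compatible'' in the sense that $f$ agrees on the identified boundary faces of the fundamental domain $\mathfrak X\times\R$ for $\Lambda\times(\text{translation})$. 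The bookkeeping built into \S\ref{RaisingDim} — in particular the modified order $\prec_\alpha^A$ governing the subdivision \eqref{Newvertices} and the $\ell$-shift by one in the added coordinate — is designed so that adjacent slabs glue, and I would check this face-by-face. (iii) The boundary-face choice in \eqref{cone}, dictated by the signs $y_{\ell,\alpha}$ of the coordinates of $e_1$, ensures that the half-open cones $C_\alpha$ tile without double-counting along shared faces; this is the content I would cite from (the forthcoming) Lemma \ref{e1} to know $y_{\ell,\alpha}\neq0$, so the case distinction in $R_{\ell,\alpha}$ is well defined.

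With these in place, the identity \eqref{BasicCount} follows from the standard fact that for a proper map between oriented manifolds of the same dimension, the signed count of preimages of a regular value equals the degree, which is $1$ here because $F$ is homotopic to the homeomorphism $G$ (the homotopy being the straight-line deformation from $f$ to $g$, which stays inside $\R_+^{r_1}\times{\C^*}^{r_2}$ by the same convexity-of-sectors argument, and does not change the degree since it is through proper maps). Concretely: given $x\in\R_+^{r_1}\times{\C^*}^{r_2}$, its $E$-orbit corresponds to a single point of $T$, hence to a single point of $\widehat T$ under $G^{-1}$, and $F^{-1}$ of that point, counted with the local signs $\mu_\alpha$, has net cardinality $\deg F=1$; unwinding through $\widehat\pi$ and $\pi$ turns this into $\sum_{\mu_\alpha\neq0}\mu_\alpha\sum_{\varepsilon\in E}\chi_{C_\alpha}(\varepsilon x)=1$. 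The finiteness-and-uniform-boundedness clause comes from the same picture: a fixed compact fundamental domain for $\Lambda\times[0,1]$-translation in $\mathfrak X\times\R$ meets only finitely many $\Lambda$-translates of any given cone, and this bound is uniform in $x$ because $f$ is a fixed proper map.

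I expect the main obstacle to be step (ii) together with (i): showing that $f$ genuinely descends to $F$ on the quotient torus requires a careful analysis of how the simplicial subdivision $\mathfrak X=\bigcup_\alpha\mathfrak X_\alpha$ behaves under the lattice $\Lambda$ — the ordered-complex formalism of \S\ref{RaisingDim} with its two competing orders $\prec_\alpha$ and $\prec_\alpha^A$ exists precisely to handle the interaction between the ``unit directions'' (subdivided via $[0,1]$) and the ``twister/winding directions'' (subdivided via $[0,N_j]$ with the nontrivial $\omega_{r+j-1}$), and matching faces across the $N_j$-periodic identification is delicate. Closely entangled with this is keeping the generators $w_{\ell,\alpha}$, and the whole homotopy from $f$ to $g$, strictly inside $\R_+^{r_1}\times{\C^*}^{r_2}$; the quantitative twister inequality \eqref{beta0} leaves only a small margin, so the argument that the argument-spread within each simplex stays below $\pi$ at every complex place will need the precise combinatorics of which vertices $v_{\ell,\alpha}$ can co-occur in a simplex of $\mathfrak X$.
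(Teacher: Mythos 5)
Your outline follows the paper's strategy closely: the same diagram, the same homotopy from $f$ to the exponential $g$ inside the convex sectors $\mathcal H_\alpha$ guaranteed by the twister bound \eqref{beta0}, the same identification of $\mu_\alpha$ with a ratio of local to global degrees, and the same reliance on the $\Lambda$-complex formalism of \S\ref{RaisingDim} to make $f$ descend to $F$. (One small imprecision: $\deg F=\deg G=\pm1$, not $1$; the identity \eqref{BasicCount} comes out right only because \eqref{mualphadefined} divides the local degree $d_\alpha=(-1)^{n-1}\mathrm{sign}(\det V_\alpha)\mathrm{sign}(\det W_\alpha)$ by $\deg F$, which you do acknowledge when you mention the normalizing factors.)

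The genuine gap is in the final step. The local--global degree argument you invoke computes $\deg F$ as a signed count of preimages only at points $\widehat x$ where $F$ is a local homeomorphism, so it proves \eqref{BasicCount} only for $x$ outside the measure-zero set $\mathcal B$ of $E$-translates of $f(\partial\mathfrak X_\alpha\times\R)$ and of the degenerate cones with $\mu_\alpha=0$. Your item (iii) asserts that the choice of half-open faces in \eqref{cone} ``ensures that the half-open cones tile without double-counting along shared faces,'' but that assertion is exactly the statement still to be proved for $x\in\mathcal B$, and it does not follow from the degree computation. The paper devotes all of \S4.3 to it: one reinterprets $C_\alpha$ as the set of $y\in\overline C_\alpha$ such that the segment $\overrightarrow{e_1,y}$ pierces $\overline C_\alpha$ (Lemma \ref{PierceCone}), shows piercing is invariant under replacing $e_1$ by $\varepsilon e_1$ (Lemma \ref{Piercee1}), and proves that the sets $I_\alpha\big(P_y(t)\big)$ stabilize to $I_\alpha(y)$ as $P_y(t)=(1-t)e_1+ty$ approaches $y$, with $P_y(t)\notin\mathcal B$ for $t$ near $1$ (Lemma \ref{Jalpha}); the generic count then transfers to $y$ itself. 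This is also the only place the hypothesis that $k$ is not totally complex enters, via Lemma \ref{e1} ($e_1$ lies in no hyperplane spanned by fewer than $n$ elements of $k$, which needs $r_1\ge1$). Your proposal neither supplies this deformation argument nor identifies where the non-totally-complex hypothesis is used, so as written it establishes \eqref{BasicCount} only almost everywhere rather than for every $x\in\R_+^{r_1}\times{\C^*}^{r_2}$.
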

We recall that in \eqref{neededcomplex} there is a  free choice of integers $N_j\ge3$ for $1\le j\le r_2$ and that the number of  cones $C_\alpha$ is at most $(n-1)!\prod_j N_j$, where $n=[k:\Q]$. If we pick all $N_j=3$,  then there are at most $3^{r_2}\cdot (n-1)!$  cones.

If $k$ is totally complex, we still prove \eqref{BasicCount}, but only for $x$ outside the $E$-orbit of the boundary of all $\overline{C}_\alpha$. The excluded  set has Lebesgue measure 0, but is still unfortunate for the calculation of abelian $L$-functions.

There are two very different   parts to the proof of Theorem \ref{Maintheorem}. The  first   (see \S\ref{LSC})    consists of    showing that if a complex $X$  has a number of  properties with respect to a lattice $\Lambda  $, then so does the  complex $ Y(X,\omega,[M_1,M_2])$   with respect to the lattice $ \Lambda\times (M_2-M_1)\Z $.  This allows us to construct inductively an $(n-1)$-complex $\mathfrak{X}$ and a function $f:\mathfrak{X}\times\R\to \R_+^{r_1}\times{\C^*}^{r_2} $ whose image gives the cones in the signed fundamental domain. The second part     of the proof (see  \S\ref{topology}) is mainly a calculation of certain global and local topological degrees associated to $f$. As in   \cite{DF2} \cite{Es}, degrees enter  because     Theorem \ref{Maintheorem} can be interpreted as an instance of the local-global principle on suitable manifolds.

\section{Lattice-adapted ordered simplicial complexes}\label{LSC}
\subsection{Affine preliminaries}\label{AffinePreliminaries}
Let $V$ be a real vector space and  $\{v_0,...,v_p\}\subset V$ a finite subset. It is called affinely independent if for a fixed $j$ the set  $\{v_i-v_j\}_{\substack{0\le i\le p\\ i\not=j}} $ is   linearly independent. This notion does not depend on the choice of $j$. If $p=0$, any $v_0\in V$ is affinely independent.

The convex hull $S$ of  $\{v_0,...,v_p\}$ is
\begin{equation}\label{Simplex}
S= [v_0,v_1,\ldots,v_p]:=\bigg\{\omega =\sum_{j=0}^p t_j v_j\Big|\,  t_j\ge0,\ \sum_{j=0}^p t_j =1 \bigg\}.
\end{equation}
We   call $S$ a  simplex  if the $ v_j $ are  affinely independent. Then Vt$(S):=\{  v_0,v_1,\ldots,v_p\}$, its set      of vertices, is uniquely determined by the point set $S$.   If we wish to note the dimension of   $S $, we call it a $p$-simplex. An $\ell$-face, or simply a face, of    $S$ is an $\ell$-simplex $K$ such that $ \mathrm{Vt}(K)\subset \mathrm{Vt}(S)$.

The $t_j$ in \eqref{Simplex} are called the barycentric coordinates of $\omega $
   and $ \sum_{j=0}^p t_j v_j$ is called its barycentric expansion  (with respect to $S$).
   If $S$ is a simplex we write $\Sp(\omega)=\Sp(\omega;S)$ for the set of spanning vertices of $\omega$, \ie those $v_j\in \mathrm{Vt}(S)$ with $t_j>0$ in \eqref{Simplex}. Note that if $\omega\in K\subset S$, where $K$ is a face of the simplex $S$, then
\begin{equation}\label{spaninface}
\Sp(\omega;S)=\Sp(\omega;K).
\end{equation}

   If   $U$  is a real vector space, a map $A:V\to U$ is  affine if $A(v )=L(v )+C$, where $L:V\to U$ is
   $\R$-linear and $C\in U$ is fixed. If $S= [v_0,v_1,\ldots,v_p] \subset V$ is a simplex, a map $H:S\to U$ is called affine if it is the restriction to $S$ of an affine map $A:\R S\to U$. Here $\R S$ is the vector subspace of $V$ spanned by the $v_i$. In terms of the barycentric expansion such a map satisfies
\begin{equation}\label{affinemap}
H(\omega )=\sum_{j=0}^p t_j H(v_j) \qquad \Big(\omega =\sum_{j=0}^p t_j v_j,\quad \sum_{j=0}^p t_j=1,\quad t_j\ge0\ \text{for}\ 0\le j\le p\Big),
\end{equation}
and so is determined by the $H(v_j)$. In fact, $H(\omega )$ is determined by the values of $H$ on the spanning vertices of $\omega$. Conversely, if for each vertex $v_j$ of $S$ we choose some $H(v_j)\in U$, then \eqref{affinemap} defines a
unique affine map $H:S\to U$.

 If
   $S^\prime=[u_0,u_1 ,\ldots,u_\ell]\subset U$ is a simplex in $U$,   a
      map $T:S\to S^\prime$ is called simplicial if it is an affine map from $S$ to $U$ that takes vertices of $S$ to vertices  of $S^\prime$. An injective simplicial map $T$ preserves barycentric coordinates, \ie if $t_j$ is the barycentric coordinate of $\omega\in S$ corresponding to a vertex $v_j$, then $t_j$ is  also the barycentric coordinate of $T(\omega)\in S^\prime$ corresponding to the vertex $T(v_j)$.

\subsection{$\Lambda$-complexes}\label{LambdaComplexsection}
Recall that in \S\ref{RaisingDim} we defined an ordered complex $$X=\bigcup_{\alpha\in\mathfrak{I}} X_{\alpha}.$$ A map of complexes $ X\to X^\prime$, where  $X^\prime=\bigcup_{\beta\in\mathfrak{I}^\prime} X^\prime_{\beta}$, is a map of index sets $\widetilde{T}:\mathfrak{I}\to\mathfrak{I}^\prime$ together with a map of point sets $T:X\to X^\prime$ such that $T$ restricted to each $X_\alpha$ is a simplicial map to $X'_{\widetilde{T}(\alpha)}$. If $\widetilde{T}$  and $T$  are bijections,  the set-theoretic inverse of $T$ is also a map of complexes, and so    the  complexes are isomorphic.

\begin{defi}\label{LambdaComplex}
Suppose $V$ is a  $p$-dimensional real vector space,  $\Lambda\subset V$ is a full lattice   (\ie  a discrete subgroup of $V$ whose $\R$-span is $V$), and
$X=\bigcup_{\alpha\in\mathfrak{I}}X_{\alpha,\prec_\alpha}\subset V$  is an ordered $p$-complex in $V$.  We shall say that $X$ is  a $\Lambda$-complex if it satisfies the following five properties.
\vskip.3cm
\noindent$(i)$ $X$ is a simplicial complex, \ie   for  $\alpha,\beta\in \mathfrak{I}$, $X_\alpha\cap X_\beta$ is   empty or
\begin{equation}\label{Intersects}
X_{\alpha}\cap X_{\beta}=[v_1,\ldots,v_\ell],\quad \text{where} \quad \{v_1,\ldots,v_\ell\}:=\mathrm{Vt}(X_\alpha)\cap\mathrm{Vt}(X_\beta).
\end{equation}
In other words, simplices intersect along common faces.
%\vskip.3cm

\noindent$(ii)$  The orders are compatible, \ie 
\begin{equation}\label{Ordered}
v\prec_\alpha w \quad\Longleftrightarrow \quad v\prec_\beta w\qquad\big(   \alpha,\beta\in \mathfrak{I},\ v,w\in \mathrm{Vt}(X_\alpha)\cap \mathrm{Vt}(X_\beta)\big).
\end{equation}
%\vskip.3cm
\noindent$(iii)$  The orders are    $ \Lambda$-invariant,   \ie  for   $\alpha,\,\alpha^\prime\in \mathfrak{I}$, if    $v,w\in \mathrm{Vt}(X_\alpha)$ and $v+\lambda,w +\lambda \in \mathrm{Vt}(X_{\alpha^\prime})$ for some $\lambda\in\Lambda$, then
\begin{equation}\label{translation}
v\prec_{\alpha}w\quad \Longleftrightarrow\quad v+\lambda\prec_{\alpha^\prime}w +\lambda .
  \end{equation}
%\vskip.3cm
\noindent$(iv)$   $X$ is nearly a fundamental domain for $\Lambda$, \ie  the restriction to $X$ of the  natural quotient map from $V$ to $V/\Lambda$ is surjective, and it is injective when res\-tricted to  the union $\bigcup_{\alpha\in \mathfrak{I}} \stackrel{\circ}{X}_\alpha$ of the interiors  of the  $X_\alpha$.
\vskip.3cm
\noindent$(v)$    The spanning vertices are   $ \Lambda$-equivariant,\footnote{\ As the complex $X=\bigcup_{\alpha\in\mathfrak{I}}X_\alpha$ is assumed simplicial, \eqref{spaninface} shows that the set $\Sp(x)=\Sp(x;X_\alpha)$ of spanning vertices of $x\in X$ is independent of the simplex  $X_\alpha$ containing $x$ used to calculate the barycentric expansion of $x$. We  will  write $ \Sp(x;X)$ when we wish to specify the simplicial  complex involved.} \ie
  \begin{align*}%\label{Escher}
 \Big[x,   x^\prime\in X,\  \lambda\in\Lambda,\ x^\prime=x+\lambda \Big]\quad\Longrightarrow\quad \Sp(x^\prime)=\Sp(x)+\lambda.
   \end{align*}
\end{defi}
\noindent Note that if  $X$ is simplicial (in the sense of (i) above) and $X^\prime$ is an isomorphic complex, then $X^\prime$  is also simplicial.

  Next we state the main result of this section.
\begin{prop}\label{MainInduction}
Let the $p$-complex  $X=\bigcup_{\alpha\in\mathfrak{I}}X_{\alpha,\prec_\alpha} $  be a $\Lambda$-complex in $V =\R\Lambda$, let $M_1<M_2$ be integers,  let $\widehat{\Lambda}:= \Lambda\times (M_2-M_1)\Z$,  and  let $\omega:V\to \R$ be a linear function. Then the ordered $(p+1)$-complex defined in \S$\mathrm{\ref{RaisingDim}}$,
$$
Y(X,\omega,[M_1,M_2])=\bigcup_{\gamma\in J} Y_{\gamma,\prec_\gamma}\subset V\times\R,
$$
 is a $\widehat{\Lambda}$-complex. Furthermore, if  $\gamma=(\alpha,v,\ell)\in J$ and  we define $\Omega:V\times\R\to\R$  by $\Omega(s\times t):=\omega(s)+t$, then for all   $\kappa\in  Y_\gamma $ we have
\begin{equation}\label{Tineq}
A(v)+\ell-1\le\Omega(\kappa)\le A(v)+\ell,
\end{equation}
where $A(v) $ is the upper fractional part of $\omega(v)$ defined in \eqref{Avchar0}.
\end{prop}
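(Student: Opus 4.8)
The plan is to prove Proposition~\ref{MainInduction} by verifying, one at a time, the five defining properties of a $\widehat\Lambda$-complex for $Y=Y(X,\omega,[M_1,M_2])$, using the explicit description of the simplices $Y_\gamma$ in \eqref{Newvertices} and of the order $\prec_\gamma$ in \eqref{gammaOrder}, together with the corresponding properties of $X$. The organizing principle throughout is that the projection $\pi_V:V\times\R\to V$ sends $\mathrm{Vt}(Y_\gamma)$ onto $\mathrm{Vt}(X_\alpha)$, and sends $Y_\gamma$ affinely onto $X_\alpha$; so every statement about $Y$ in the $V$-direction is controlled by the hypothesis on $X$, and what remains is the extra bookkeeping in the new $\R$-direction.

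First I would establish \eqref{Tineq}, since it is both the cleanest statement and the technical engine for the rest. By construction \eqref{Newvertices}, each vertex of $Y_\gamma$ is of the form $v_i\times\bigl(A(v_i)-\omega(v_i)+\ell\bigr)$ or $v_i\times\bigl(A(v_i)-\omega(v_i)+\ell-1\bigr)$; applying $\Omega(s\times t)=\omega(s)+t$ to such a vertex gives exactly $A(v_i)+\ell$ or $A(v_i)+\ell-1$. Since $\Omega$ is affine and $\kappa\in Y_\gamma$ is a convex combination of the vertices, $\Omega(\kappa)$ lies between the minimum and maximum of these values over the vertices. The point $v=v_j$ sits at the ``break'' in \eqref{Newvertices}: vertices $v_0,\dots,v_j$ carry the level $+\ell$ and $v_j,\dots,v_p$ carry $+\ell-1$, and because $v_0\prec_\alpha^A\cdots\prec_\alpha^A v_p$ we have $A(v_0)\le\cdots\le A(v_p)$, so among the upper group the largest $\Omega$-value is $A(v_j)+\ell$ and among the lower group the smallest is $A(v_j)+\ell-1$ (here using $0<A\le1$ to rule out the off-by-one collisions that would otherwise occur). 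Hence $A(v_j)+\ell-1\le\Omega(\kappa)\le A(v_j)+\ell$, which is \eqref{Tineq} with $v=v_j=v_{j_\gamma}$. This also shows $Y_\gamma$ is genuinely $(p+1)$-dimensional: $\pi_V(Y_\gamma)=X_\alpha$ is $p$-dimensional and $\Omega$ is non-constant on $Y_\gamma$ precisely because the two levels $\ell-1,\ell$ both occur, so affine independence of the $p+2$ vertices follows.

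Next, for properties $(i)$--$(iii)$ I would argue as follows. For $(i)$ (simplicial), the key observation is that $Y_\gamma$ is determined by the pair $(\alpha,j_\gamma,\ell)$, and two simplices $Y_\gamma,Y_{\gamma'}$ with $\gamma=(\alpha,v,\ell)$, $\gamma'=(\alpha',v',\ell')$ intersect only when the underlying $X_\alpha,X_{\alpha'}$ share a common face $[v_{i_1},\dots,v_{i_s}]$; one then checks that the added $\R$-coordinates are forced to match on common vertices — the coordinate appended to $v_i$ is $A(v_i)-\omega(v_i)+\ell$ or $+\ell-1$, and which of the two occurs is determined by whether $v_i$ precedes or follows the break vertex in the $\prec^A$ order, which by property $(ii)$ for $X$ is consistent across $\alpha$ and $\alpha'$ — so the intersection is the convex hull of exactly the common vertices, a face of each. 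Property $(ii)$ for $Y$ reduces to property $(ii)$ for $X$ plus the tie-breaking clause of \eqref{gammaOrder} using $\pi_\R$; property $(iii)$ ($\widehat\Lambda$-invariance) is where one uses that $\beta$... — rather, where one uses that $\omega$ is linear and integer-valued on lattice translates in the sense that $A(v+\lambda)=A(v)$ and $\omega(v+\lambda)-\omega(v)\in\Z$, so a translate $\gamma\mapsto\gamma+(\lambda,(M_2-M_1)k)$ sends $Y_\gamma$ to another simplex of the family with the same break-index, and $\prec_\gamma$ transports correctly by property $(iii)$ for $X$ and the fact that the $\pi_\R$-tiebreak is translation-invariant.

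Finally, properties $(iv)$ and $(v)$ are the substantive ones, and I expect $(iv)$ — that $Y$ is nearly a fundamental domain for $\widehat\Lambda$ — to be the main obstacle. Surjectivity onto $(V\times\R)/\widehat\Lambda$ follows by fixing the $V$-coordinate (covered by surjectivity of $X$) and then checking that for each $u\in X$ the fibres $\{Y_\gamma\cap\pi_V^{-1}(u)\}$ cover an interval of $\Omega$-length $M_2-M_1$ as $\gamma$ ranges over the relevant indices: \eqref{Tineq} localizes $Y_\gamma$ to the $\Omega$-band $[A(v_{j_\gamma})+\ell-1,\,A(v_{j_\gamma})+\ell]$, and as $j$ runs over $0,\dots,p$ and $\ell$ over $M_1,\dots,M_2-1$ these bands tile $\R$ modulo $(M_2-M_1)\Z$ — the telescoping here is exactly the classical hypercube-triangulation picture, the successive break-points being $A(v_0)\le\cdots\le A(v_p)$ together with the integer shifts. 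Injectivity on the union of open simplices is the delicate part: one shows that if $\kappa$ lies in the interior of $Y_\gamma$ then $\pi_V(\kappa)$ lies in the interior of $X_\alpha$ (using the affine structure), so by injectivity of $X$ on $\bigcup\stackrel\circ X_\alpha$ the simplex $\alpha$ is determined, and then $\Omega(\kappa)\in(A(v_{j_\gamma})+\ell-1,\,A(v_{j_\gamma})+\ell)$ — an open interval whose left endpoint's fractional part uniquely pins down both $j_\gamma$ and $\ell\bmod(M_2-M_1)$ once $\alpha$ is known, because distinct break vertices have the property that $A(v_j)$ determines $j$ among those with $A$-values not equal, and the remaining coincidences $A(v_j)=A(v_{j+1})$ are handled by noting that the corresponding interiors do not actually overlap. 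Property $(v)$, $\widehat\Lambda$-equivariance of spanning vertices, then follows from property $(v)$ for $X$ applied to $\pi_V(\kappa)$, together with the observation that the spanning vertices of $\kappa$ in $Y_\gamma$ project bijectively onto those of $\pi_V(\kappa)$ in $X_\alpha$ except possibly at the doubled vertex $v_{j_\gamma}$, where a short case check using the barycentric coordinates from \eqref{Newvertices} shows the equivariance is preserved.
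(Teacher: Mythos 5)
Your treatment of \eqref{Tineq} and the broad outline for properties $(i)$, $(ii)$, $(iv)$ are in the right spirit, but the proposal founders on a concrete false assumption at the heart of properties $(iii)$ and $(v)$. You assert that ``$A(v+\lambda)=A(v)$ and $\omega(v+\lambda)-\omega(v)\in\Z$'' for $\lambda\in\Lambda$. The proposition only assumes $\omega:V\to\R$ is linear; nothing forces $\omega(\Lambda)\subset\Z$, and in the application $\omega_{r+j-1}(x)=\frac{N_j}{2\pi}\sum_\ell x[\ell]\arg\!\big(\varepsilon_\ell^{(r_1+j)}\big)$ certainly does not take integer values on $\Lambda$. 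Consequently $A(v+\lambda)=A(v)+A(\lambda)-\Delta_{v,\lambda}$ with $\Delta_{v,\lambda}\in\{0,1\}$ depending on whether $A(v)+A(\lambda)>1$ (this is \eqref{deltavl}), and the order $\prec_\alpha^A$ is \emph{not} $\Lambda$-invariant: translation by $\lambda$ can \emph{reverse} the relative order of two vertices $v,w$ exactly when $\Delta_{v,\lambda}\not=\Delta_{w,\lambda}$ (Lemma \ref{Aordering}). So a lattice translate does not send $Y_\gamma$ to a simplex ``with the same break-index'' by a trivially controlled vertical shift; one must translate by $\widehat\lambda=\lambda\times\big(A(\lambda)-\omega(\lambda)-\Delta(v,\lambda)\big)$ and then verify, by the case analysis of Lemma \ref{Tranlate}, that the discrepancy $\Delta(v,\lambda)-\Delta(w,\lambda)$ and the induced change $t^\prime-t$ in the prism level cancel vertex by vertex. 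This is the technical core of the proposition and is entirely missing from your argument; without it both $(iii)$ and $(v)$ remain unproved.

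A second, related omission: even once the correct $\widehat\lambda$ is found, its $\R$-component is an integer but generally not a multiple of $M_2-M_1$, so $\widehat\lambda\notin\widehat{\Lambda}$. For property $(v)$ one must adjust by elements of $0_V\times(M_2-M_1)\Z$ and handle the boundary cases where the adjusted level lands at $M_1$ or $M_2$, which requires identifying the top face of $Y_{(\alpha^\prime,v_p,M_2-1)}$ with the bottom face of $Y_{(\alpha^\prime,v_0,M_1)}$ shifted by $0_V\times(M_2-M_1)$ (the cases ii)/iii) in \S\ref{proofofprop}). Finally, your argument for simpliciality --- the added coordinates match on common vertices, hence the intersection is the convex hull of the common vertices --- is not a proof: agreement of vertex sets does not by itself imply that two simplices meet only along the common face. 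The paper reduces this to the standard prism decomposition of $\Delta_p\times I$ via an affine isomorphism and then tracks spanning vertices (Lemmas \ref{SimplexPiling}--\ref{RPiling}), and introduces the shear $T$ of Lemma \ref{NiceT} so that $Y=T(X\times[M_1,M_2])$ inherits simpliciality from $X^A$; you would need some equivalent device.
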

\noindent In the next three subsections we prove a series of lemmas leading to a proof  in  \S\ref{proofofprop} of the above proposition.
\subsection{Simplicial decomposition of $X\times[M_1,M_2]$}\label{ConstSimp}
\begin{lemm}\label{SimplexPiling} Let $I=[0,1]\subset\R$ be the unit interval, let $  S= S_\prec$   be an ordered $p$-simplex in some real $p$-dimensional vector space $V$, and write $S=[v_0,v_1,\ldots,v_p]$ as the convex hull of its $p+1$ vertices, ordered so that  $v_0\prec v_1\prec\cdots\prec v_p$.  For any vertex  $v=v_j\in \mathrm{Vt}(S)$,  let
\begin{align}\label{StimesI}
 S^v&=S_\prec^v:= [v_0\times 1,v_1\times 1,\ldots,v_j\times 1,v_j\times 0,v_{j+1}\times 0,\ldots,v_p\times 0] \\ \subset
 &S\times I\subset V\times\R. \nonumber
\end{align}
Then $S^v$ is an ordered $(p+1)$-simplex if we define, for $w,w^\prime\in\mathrm{Vt}(S^v)$,
\begin{align}\label{neworder}
&w\prec^v w^\prime\Longleftrightarrow\\
&\Big[ [\pi_V(w)\prec \pi_V(w^\prime)]\ \text{or}\  [\pi_V(w)= \pi_V(w^\prime)\ \text{and}\ \pi_\R(w^\prime)<\pi_\R(w)] \Big] ,\nonumber
 \end{align}
where  $\pi_V: V\times\R\to V$ and $\pi_\R: V\times\R\to \R$ are the natural projections. Furthermore,
$ \displaystyle S\times I=\bigcup_{v\in \mathrm{Vt}(S)} S^v,
$ 
 and this  is an ordered simplicial $(p+1)$-complex, \ie it satisfies \eqref{Intersects} and \eqref{Ordered}.
\end{lemm}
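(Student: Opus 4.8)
The plan is to verify, one at a time, the assertions bundled into the statement: that each $S^v$ is a bona fide $(p+1)$-simplex, that $\prec^v$ is a strict total order on $\mathrm{Vt}(S^v)$, that the $S^v$ cover $S\times I$, and that the resulting decomposition satisfies the simplicial-complex and order-compatibility conditions \eqref{Intersects} and \eqref{Ordered}. Affine independence of the $p+2$ points in \eqref{StimesI} is checked directly: an affine relation with coefficients $a_i$ on $v_i\times 1\ (0\le i\le j)$ and $b_i$ on $v_i\times 0\ (j\le i\le p)$ and with $\sum a_i+\sum b_i=0$ has vanishing $\R$-coordinate $\sum_{i\le j}a_i$, hence $\sum_{i\ge j}b_i=0$ too; the $V$-coordinate then reads $\sum_{i<j}a_iv_i+(a_j+b_j)v_j+\sum_{i>j}b_iv_i=0$, an affine relation among the affinely independent $v_0,\dots,v_p$ with zero coefficient sum, forcing $a_i=0\ (i<j)$, $b_i=0\ (i>j)$, $a_j+b_j=0$, and then $a_j=\sum_{i\le j}a_i=0$, so $b_j=0$ as well. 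For the order I would point out that the right-hand side of \eqref{neworder} does not in fact depend on $v$: it is the lexicographic order that compares $\pi_V$-components by $\prec$ and breaks any tie in favour of the larger $\pi_\R$-value. Since $\prec$ is total on $\mathrm{Vt}(S)$ and two distinct vertices of $S^v$ share a $\pi_V$-image only when they are the two copies $v_j\times 1,\ v_j\times 0$ of $v_j$, this is a strict total order; its $v$-independence will also yield \eqref{Ordered} at the end for free.

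The core of the argument is the covering statement, which I would obtain together with an explicit description of the pairwise intersections. For $x=\sum_{i=0}^p t_iv_i\in S$ in barycentric coordinates ($t_i\ge 0$, $\sum t_i=1$), set $q_k:=t_0+\dots+t_k$ for $-1\le k\le p$, so $q_{-1}=0$ and $q_p=1$. I claim a point $(x,s)\in S\times I$ lies in $S^{v_j}$ exactly when $q_{j-1}\le s\le q_j$: in that range the level-$1$ coefficients $a_i:=t_i\ (i<j)$, $a_j:=s-q_{j-1}$ together with the level-$0$ coefficients $b_j:=q_j-s$, $b_i:=t_i\ (i>j)$ are nonnegative, sum to $1$, reproduce $x$ in the $V$-coordinate since $a_j+b_j=t_j$, and reproduce $s$ in the $\R$-coordinate; conversely this same bookkeeping squeezes the $\R$-coordinate of any point of $S^{v_j}$ between $q_{j-1}$ and $q_j$. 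As $0=q_{-1}\le s\le q_p=1$, some $j$ satisfies $q_{j-1}\le s\le q_j$, so every $(x,s)$ lies in some $S^{v_j}$; since each $S^{v_j}$ sits inside the convex set $S\times I$, this gives $S\times I=\bigcup_v S^v$.

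For \eqref{Intersects} I would take $a<b$ and $(x,s)\in S^{v_a}\cap S^{v_b}$; the two membership criteria give $q_{a-1}\le s\le q_a$ and $q_{b-1}\le s\le q_b$, and since $q$ is nondecreasing with $a\le b-1$ these overlap only at $q_a=q_{b-1}$, forcing $s=q_a$ and $t_{a+1}=\dots=t_{b-1}=0$. Feeding this back into the expansion relative to $S^{v_a}$ gives $b_a=q_a-s=0$ and shows the vertices with nonzero coefficient all lie in $\{v_i\times 1\mid i\le a\}\cup\{v_i\times 0\mid i\ge b\}=\mathrm{Vt}(S^{v_a})\cap\mathrm{Vt}(S^{v_b})$, so $(x,s)$ is in the convex hull of the common vertices; the reverse inclusion is immediate. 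Condition \eqref{Ordered} is then automatic from the $v$-independence of $\prec^v$ noted above, so distinct $\prec^{v_a}$, $\prec^{v_b}$ agree wherever both are defined. I expect the only real obstacle to be the index bookkeeping with the cumulative sums $q_k$ in these last two paragraphs; everything else is a straightforward unwinding of the definitions.
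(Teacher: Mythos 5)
Your proof is correct, and it is complete on points where the paper is terser. The paper's own proof declares the affine independence of the $p+2$ points and the compatibility \eqref{Ordered} to be immediate, and handles \eqref{Intersects} (together with the covering) by transporting the whole configuration, via the affine isomorphism $v_i\mapsto E_{p-i}$, onto the standard triangulation of the prism $\Delta_p\times I$ into the simplices $\{1\ge x_1\ge\cdots\ge x_{p-j}\ge x_{p+1}\ge x_{p-j+1}\ge\cdots\ge x_p\ge0\}$, which it cites as "easily seen to be simplicial." You instead work directly in barycentric coordinates: your membership criterion $q_{j-1}\le s\le q_j$ with $q_k=t_0+\cdots+t_k$ is exactly the paper's chain of inequalities read back through that coordinate change (the $m$-th standard coordinate of the image of $x$ is $q_{p-m}$), so the two arguments rest on the same combinatorial fact, but yours verifies it from scratch rather than by reduction to a known triangulation. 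What your route buys is self-containedness — the affine-independence computation, the explicit coefficients $a_j=s-q_{j-1}$, $b_j=q_j-s$, and the analysis of $S^{v_a}\cap S^{v_b}$ via $q_a=q_{b-1}$ are all spelled out — at the cost of the index bookkeeping you flag; the paper's route is shorter but leans on the reader's familiarity with the standard prism decomposition. Your observation that $\prec^v$ is independent of $v$, so that \eqref{Ordered} is automatic, is exactly the right way to dispatch that condition and is also implicitly how the paper treats it.
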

\begin{proof} It is immediate that $S^v$ is a $(p+1)$-simplex and that \eqref{neworder} makes $\bigcup_v S^v$ into an ordered complex satisfying \eqref{Ordered}. To see that it is simplicial, \ie satisfies \eqref{Intersects}, we will show that it is isomorphic to the standard simplicial decomposition of $\Delta_p\times I$, where $\Delta_p$ is the standard   $p$-simplex. Indeed, let $e_1,...,e_p$ be the standard basis of $\R^p$ and set $E_0:=0_{\R^p}\in\R^p,\ E_i:=e_i+E_{i-1}\ \,(1\le i\le p)$, and let $A:V \to\R^p $ be the unique affine isomorphism satisfying $A(v_i):=E_{p-i}\ \,(0\le i\le p)$, so that
\begin{align*}
A(S)=[E_p,E_{p-1},...,E_0]
=:\Delta_p=\big\{x\in\R^p\big|\,1\ge x_1 \ge x_2\ge \cdots \ge x_p\ge0\big\}.
\end{align*}
Let $\widehat{A}:V\times\R\to\R^p\times\R$ be the affine isomorphism given by  $\widehat{A}(v\times y):=A(v)\times y\in\R^p\times\R=\R^{p+1}$.
Then  $\widehat{A}(S\times I)=\Delta_p\times I$, and
\begin{align*}
&\widehat{A}(S^{v_j})=[E_p\times1,...,E_{p-j}\times1,E_{p-j}\times0,E_{p-j-1}\times0,...,E_0\times0]
\\ & =\big\{x\in\R^{p+1}\big|\,1\ge x_1 \ge \cdots \ge x_{p-j}\ge x_{p+1}\ge x_{p-j+1}\ge \cdots \ge x_p\ge0\big\}
\end{align*}
$(0\le j\le p)$. This is  the standard decomposition of the product $\Delta_p\times I$,  easily seen to be simplicial.
 \end{proof}

Next we record some simple properties of the above decomposition of $S\times I$.

\noindent$(a)$ The projection $\pi_V: V\times\R\to V$   maps $S^v$ onto $S$, and maps the vertices of $S^v$ bijectively onto the vertices of $S$, except for the vertices $v\times0$ and $v\times1$, both of which map to $v$.

\noindent$(b)$   Let  $ {S^v}^-$ and   ${S^v}^+$ be the $p$-faces of  $S^v$ defined by
\begin{align}\label{RoofFloor}
{S_\prec^v}^- ={S^v}^- :=  & [v_0\times 1,v_1\times 1,\ldots,v_{j-1}\times 1,v_j\times 0,v_{j+1}\times 0,\ldots,v_p\times 0],\\
{S_\prec^v}^+ ={S^v}^+ := & [v_0\times 1,v_1\times 1,\ldots,v_{j-1}\times 1,v_j\times 1,v_{j+1}\times 0,\ldots,v_p\times 0],\nonumber
\end{align}
If   $w\in S$, then $w\times0$ is contained in a proper face of $ S^{v_0}$,  namely in
$    {S^{v_0}}^-:=[v_0\times 0,v_1\times 0,\ldots, v_p\times 0]\subset S^{v_0}.$
Similarly,
  $w\times1$ is contained in a proper face of $S^{v_p}$,
\begin{equation}\label{TopBottom}
  {S^{v_p}}^+:=[v_0\times 1,v_1\times 1,\ldots, v_p\times 1]={S^{v_0}}^-\ +\ (0_V\times1)\subset S^{v_p},
 \end{equation}
  where $0_V\in V$ is the origin in $V$.

\begin{lemm}\label{ComplexPiling} Suppose $X =\bigcup_{\alpha\in \mathfrak{I}} X_{\alpha,\prec_\alpha}  $ is  an ordered simplicial   $p$-complex. Then   
\begin{equation}\label{SalphaI}
X\times I=\bigcup_{\alpha\in \mathfrak{I}}\bigcup_{v\in\mathrm{Vt}(X_\alpha)} X^v_{\alpha,\prec_\alpha^v}
\end{equation}
 is an ordered simplicial $(p+1)$-complex. Here  $I$ is the closed interval $[0,1]$ and $X^v_{\alpha, \prec_\alpha^v}$  was defined in Lemma \ref{SimplexPiling}, taking  $S_\prec:=X_{\alpha,  \prec_\alpha}$.
\end{lemm}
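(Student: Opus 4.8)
The statement to prove is Lemma \ref{ComplexPiling}: that the family $\{X^v_{\alpha,\prec_\alpha^v}\}_{\alpha,v}$ obtained by applying the single-simplex construction of Lemma \ref{SimplexPiling} to every simplex of an ordered simplicial $p$-complex $X$ is itself an ordered simplicial $(p+1)$-complex covering $X\times I$. Three things need checking: (1) the union of the pieces is all of $X\times I$; (2) the pieces intersect along common faces, i.e.\ property \eqref{Intersects}; and (3) the orders $\prec_\alpha^v$ are mutually compatible, i.e.\ property \eqref{Ordered}. Item (1) is immediate: $X\times I=\bigcup_\alpha(X_\alpha\times I)$ since $X=\bigcup_\alpha X_\alpha$, and Lemma \ref{SimplexPiling} gives $X_\alpha\times I=\bigcup_{v\in\mathrm{Vt}(X_\alpha)}X^v_\alpha$. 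Item (3) should also be quick: by Lemma \ref{SimplexPiling}, within a fixed $X_\alpha\times I$ the orders $\prec_\alpha^v$ already satisfy \eqref{Ordered}; for two pieces $X^v_\alpha$ and $X^w_\beta$ coming from different simplices $X_\alpha,X_\beta$ of $X$, a shared vertex has the form $u\times 0$ or $u\times 1$ with $u\in\mathrm{Vt}(X_\alpha)\cap\mathrm{Vt}(X_\beta)$, and the defining formula \eqref{neworder} for $\prec^v$ compares two such vertices purely through $\prec_\alpha$ (resp.\ $\prec_\beta$) on their $\pi_V$-images and through the reversed order on the $\R$-coordinate; since $X$ is ordered, $\prec_\alpha$ and $\prec_\beta$ agree on $\mathrm{Vt}(X_\alpha)\cap\mathrm{Vt}(X_\beta)$, so $\prec_\alpha^v$ and $\prec_\beta^w$ agree on the common vertex set.

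\textbf{The main obstacle.} The real work is item (2), the simplicial intersection property \eqref{Intersects} for two pieces $X^v_\alpha$ and $X^w_\beta$ with $\alpha\neq\beta$. First one must identify the candidate common face. Let $F:=X_\alpha\cap X_\beta=[u_1,\dots,u_\ell]$, which is a common face of $X_\alpha$ and $X_\beta$ since $X$ is simplicial. One expects $X^v_\alpha\cap X^w_\beta$ to equal an appropriate piece of $F\times I$, and the plan is to reduce to Lemma \ref{SimplexPiling} applied to the simplex $F$. Concretely: both $X^v_\alpha$ and $X^w_\beta$ project under $\pi_V$ into $X_\alpha$ and $X_\beta$ respectively, hence $\pi_V(X^v_\alpha\cap X^w_\beta)\subseteq F$, so $X^v_\alpha\cap X^w_\beta\subseteq F\times I$. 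Now I would argue that $X^v_\alpha\cap(F\times I)$ is a face of $X^v_\alpha$ and equals one of the pieces $F^{v'}$ (or a face thereof) in the decomposition of $F\times I$ furnished by Lemma \ref{SimplexPiling}, and likewise for $X^w_\beta\cap(F\times I)$; then the intersection of two faces of the simplicial complex $\bigcup_{v'}F^{v'}$ (which is simplicial by Lemma \ref{SimplexPiling}) is again a common face, giving \eqref{Intersects}.

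\textbf{Carrying out the reduction.} The delicate point is that the ordering of $\mathrm{Vt}(F)$ induced from $\prec_\alpha$ and from $\prec_\beta$ must coincide — this is exactly property \eqref{Ordered} of the ordered complex $X$ — so that the two decompositions of $F\times I$ obtained by restricting the $X_\alpha$-decomposition and the $X_\beta$-decomposition are the \emph{same} decomposition, namely the canonical one from Lemma \ref{SimplexPiling} applied to the ordered simplex $F$. To make the ``face'' claim precise I would use properties $(a)$ and $(b)$ recorded after Lemma \ref{SimplexPiling}: a vertex $u_i\times 0$ or $u_i\times 1$ of $X^v_\alpha$ lies over $u_i\in\mathrm{Vt}(X_\alpha)$, and the list of vertices of $X^v_\alpha$ lying over $\mathrm{Vt}(F)$ spans exactly one of the simplices $F^{v'}$ in the decomposition of $F\times I$, where $v'$ is determined by the position of $v$ relative to $F$ in the order $\prec_\alpha$ (if $v\in\mathrm{Vt}(F)$ then $v'=v$; if $v\notin\mathrm{Vt}(F)$ then $v'$ is the largest vertex of $F$ below $v$, or the roof/floor face as in $(b)$). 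Since $X^v_\alpha$ is a simplex and $F^{v'}$ is the convex hull of a subset of its vertices, $F^{v'}$ is genuinely a face of $X^v_\alpha$; intersecting with $X^w_\beta$ and repeating the argument on the other side, we land inside the single simplicial complex $F\times I=\bigcup_{v'}F^{v'}_{\prec^{v'}}$, where \eqref{Intersects} holds by Lemma \ref{SimplexPiling}. Matching up the orders via \eqref{spaninface} and the compatibility built into Lemmas \ref{SimplexPiling} finishes the proof. The bookkeeping with the floor/roof faces from $(b)$ — i.e.\ the cases $v'=v_0$ with the bottom face ${S^{v_0}}^-$ or $v'=v_p$ with the top face — is where one must be careful, but no new idea is needed.
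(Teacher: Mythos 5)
Your proposal is correct and follows essentially the same route as the paper: reduce the intersection $X^v_\alpha\cap X^w_\beta$ to the common face $F=X_\alpha\cap X_\beta$, apply Lemma \ref{SimplexPiling} to $F\times I$ (using the order compatibility \eqref{Ordered} so both sides induce the same decomposition), and observe that the relevant pieces of $F\times I$ are faces of $X^v_\alpha$ and $X^w_\beta$. The paper streamlines the bookkeeping you describe by choosing the vertex $\mu\in\mathrm{Vt}(F)$ from the point $\delta$ itself (so that $F^\mu$ is a face of both $X_\alpha^\mu$ and $X_\beta^\mu$) and then comparing spanning vertices via \eqref{spaninface}, but the underlying argument is the same.
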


\begin{proof}
The equality of sets in \eqref{SalphaI} is clear from Lemma \ref{SimplexPiling}. By the same lemma, it is easy to see that \eqref{SalphaI} is an ordered $(p+1)$-complex. We now show that the decomposition is simplicial. Suppose 
$  \delta=\omega\times y\in X_\alpha^v\cap X_\beta^w.$ 
We will show that the spanning verticies of $\delta$ satisfy
$\Sp(\delta;X_\alpha^v)=\Sp(\delta; X_\beta^w)$. This suffices as it shows that $\delta$ lies in the convex hull of $ \mathrm{Vt}(X_\alpha^v)\cap\mathrm{Vt}(X_\beta^w) $. Let $L:=X_\alpha\cap X_\beta$, a non-empty   simplex as $\omega\in L$, and a  common face of the simplices $X_\alpha$ and $X_\beta$. Then $\delta\in L\times I$, so by Lemma  \ref{SimplexPiling}, $\delta\in L^\mu $ for some vertex  $\mu\in  \mathrm{Vt}(L)=\mathrm{Vt}(X_\alpha)\cap\mathrm{Vt}(X_\beta) $. But $L^\mu$ is a face of $X_\alpha^\mu$ and of $X_\beta^\mu$, as follows immediately from the construction of Lemma \ref{SimplexPiling}. Therefore $\delta\in X_\alpha^\mu\cap X_\alpha^v$ and  $\delta\in X_\beta^\mu\cap X_\beta^w$. Since $X_\alpha\times I= \bigcup_{\rho\in  \mathrm{Vt}(X_\alpha)} X_\alpha^\rho$ is simplicial by Lemma \ref{SimplexPiling}, we have 
$$
\Sp(\delta;X_\alpha^v)=\Sp(\delta;X_\alpha^\mu)=\Sp(\delta;L^\mu)=\Sp(\delta;X_\beta^\mu)=\Sp(\delta;X_\beta^w). 
$$
\end{proof}

Next we  use translations in the last coordinate to extend the decomposition of Lemma \ref{ComplexPiling} from $X\times I$ to $X\times [M_1,M_2]$. Given  an ordered $p$-complex $X =\bigcup_{\alpha\in \mathfrak{I}} X_{\alpha,\prec_\alpha}\subset V$, define  the ordered $(p+1)$-simplex $X^\gamma=X_{\prec_\gamma}^\gamma$ by
\begin{equation}\label{YUpgamma}
X^\gamma:=X^v_\alpha+(0_V\times \ell) \qquad\big(\gamma=(\alpha,v,\ell), \ \alpha\in \mathfrak{I},\ v\in\mathrm{Vt}(X_\alpha),\ \ell\in\Z \big),
\end{equation}
where $\prec_\gamma$ is defined by \eqref{neworder}. Thus, for $w,w^\prime\in\mathrm{Vt}(X^\gamma)$,
\begin{align}\label{neworderX}
&w\prec_\gamma w^\prime\Longleftrightarrow\\
&\Big[ [\pi_V(w)\prec_\alpha \pi_V(w^\prime)]\ \text{or}\  [\pi_V(w)= \pi_V(w^\prime)\ \text{and}\ \pi_\R(w^\prime)<\pi_\R(w)] \Big].\nonumber
 \end{align}
Note that when    $\gamma\in J$ in   \eqref{InductiveJ}, then $M_1\le\ell<M_2$. 

\begin{lemm}\label{RPiling} Let $M_1< M_2$ be integers, suppose   $X =\bigcup_{\alpha\in \mathfrak{I}} X_{\alpha,\prec_\alpha}  $ is   an ordered simplicial $p$-complex,  and let   $X^\gamma=X_{\prec_\gamma}^\gamma $ be    as in \eqref{YUpgamma} and \eqref{neworderX}.   Then
\begin{equation}\label{SalphaR}
X\times  [M_1,M_2]=\bigcup_{\gamma\in J} X^\gamma=\bigcup_{\gamma\in J} X_{\prec_\gamma}^\gamma 
\end{equation}
 is  an ordered simplicial $(p+1)$-complex.
\end{lemm}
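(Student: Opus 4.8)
The plan is to reduce everything to Lemma~\ref{ComplexPiling} by cutting $[M_1,M_2]$ into the unit sub-intervals $[\ell,\ell+1]$, $M_1\le\ell<M_2$. Since $X^v_\alpha\subset X_\alpha\times I$, each $X^\gamma=X^v_\alpha+(0_V\times\ell)$ lies in $X\times[\ell,\ell+1]$, and translating the identity $X\times I=\bigcup_{\alpha}\bigcup_{v\in\mathrm{Vt}(X_\alpha)}X^v_\alpha$ of Lemma~\ref{ComplexPiling} by $0_V\times\ell$ gives $X\times[\ell,\ell+1]=\bigcup_{\alpha}\bigcup_{v}X^{(\alpha,v,\ell)}$; taking the union over the finitely many relevant $\ell$ yields the set equality in~\eqref{SalphaR}. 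That each $X^\gamma$ is an ordered $(p+1)$-simplex for the order $\prec_\gamma$ of~\eqref{neworderX} is immediate from Lemma~\ref{SimplexPiling}, because translation by $0_V\times\ell$ preserves affine independence, dimension, and the comparisons defining~\eqref{neworder}. So the content of the lemma is that the decomposition $\bigcup_{\gamma\in J}X^\gamma$ satisfies the two defining conditions of an ordered simplicial complex, namely~\eqref{Intersects} and~\eqref{Ordered}.

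For~\eqref{Ordered}, let $\rho,\rho'$ be common vertices of $X^\gamma$ and $X^{\gamma'}$, where $\gamma=(\alpha,v,\ell)$ and $\gamma'=(\beta,w,m)$. By property $(a)$ following Lemma~\ref{SimplexPiling} the projection $\pi_V$ sends vertices of $X^\gamma$ to vertices of $X_\alpha$, so $\pi_V(\rho),\pi_V(\rho')\in\mathrm{Vt}(X_\alpha)\cap\mathrm{Vt}(X_\beta)$. Comparing the definition~\eqref{neworderX} of $\prec_\gamma$ with that of $\prec_{\gamma'}$, the only clause sensitive to the ambient simplex is $\pi_V(\rho)\prec_\alpha\pi_V(\rho')$ versus $\pi_V(\rho)\prec_\beta\pi_V(\rho')$, and these are equivalent because $X$ itself satisfies~\eqref{Ordered}; the alternative clause refers only to the real numbers $\pi_\R(\rho),\pi_\R(\rho')$. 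Hence $\rho\prec_\gamma\rho'\Leftrightarrow\rho\prec_{\gamma'}\rho'$.

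The remaining condition~\eqref{Intersects} contains the one genuinely new point. Fix $\gamma=(\alpha,v,\ell)$ and $\gamma'=(\beta,w,m)$ with $\ell\le m$. If $m\ge\ell+2$, then $X^\gamma$ and $X^{\gamma'}$ lie in the disjoint strips $V\times[\ell,\ell+1]$ and $V\times[m,m+1]$, so $X^\gamma\cap X^{\gamma'}=\varnothing$ and there is nothing to check. If $m=\ell$, translating down by $0_V\times\ell$ reduces the assertion to the simpliciality of $X\times I$ already proved in Lemma~\ref{ComplexPiling}. The decisive case is $m=\ell+1$, where $X^\gamma\cap X^{\gamma'}\subset V\times\{\ell+1\}$: from the vertex list of $X^\gamma$ (that of~\eqref{StimesI}, with the last coordinate shifted by $\ell$) one reads off that $X^\gamma\cap(V\times\{\ell+1\})$ is the face of $X^\gamma$ spanned by its vertices at height $\ell+1$, and its image under the affine bijection $\pi_V\colon V\times\{\ell+1\}\to V$ is a face $G$ of $X_\alpha$ (an initial segment of $\mathrm{Vt}(X_\alpha)$ for $\prec_\alpha$); likewise $X^{\gamma'}\cap(V\times\{\ell+1\})$ corresponds to a face $H$ of $X_\beta$. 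It therefore suffices to show that $G\cap H$ is the convex hull of $\mathrm{Vt}(G)\cap\mathrm{Vt}(H)$. Since $X$ is simplicial, $G\cap H\subset X_\alpha\cap X_\beta=:L$, which by~\eqref{Intersects} for $X$ is a common face of $X_\alpha$ and $X_\beta$; hence $G\cap H=(G\cap L)\cap(H\cap L)$ is the intersection of two faces of the single simplex $X_\alpha$, and — since a point of a simplex has a unique barycentric expansion, so that its set of spanning vertices determines which faces contain it — this intersection is the convex hull of $\mathrm{Vt}(G)\cap\mathrm{Vt}(H)$. Transporting this back through $\pi_V$ shows that $X^\gamma\cap X^{\gamma'}$ is the convex hull of $\mathrm{Vt}(X^\gamma)\cap\mathrm{Vt}(X^{\gamma'})$, which is~\eqref{Intersects}.

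I expect the main obstacle to be precisely this adjacent-strip case: one must verify that the decompositions induced on the shared hyperplane $V\times\{\ell+1\}$ by the slab below and by the slab above coincide, and that every simplex abutting this hyperplane meets it in a face. Once that bookkeeping is settled, the rest follows mechanically from Lemmas~\ref{SimplexPiling} and~\ref{ComplexPiling}.
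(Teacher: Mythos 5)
Your proof is correct, but it organizes the verification differently from the paper. The paper's (very terse) proof works column by column: for a fixed $\alpha$ it transports $X_\alpha\times[M_1,M_2]$ by an affine isomorphism onto the standard staircase decomposition of $\Delta_p\times[M_1,M_2]$ to get simpliciality of $\bigcup_{v,\ell}\big(X^v_\alpha+(0_V\times\ell)\big)$ in one stroke, and then glues the columns over different $\alpha$ exactly as in Lemma \ref{ComplexPiling}, i.e.\ by locating a point of $X^\gamma\cap X^{\gamma'}$ inside $(X_\alpha\cap X_\beta)\times[M_1,M_2]$ and comparing spanning vertices through a common face there. You instead slice horizontally into unit slabs: intersections within one slab reduce to Lemma \ref{ComplexPiling} by a translation, slabs at distance $\ge2$ are disjoint, and the adjacent-slab case is handled by observing that the intersection lives in the hyperplane $V\times\{\ell+1\}$, where each simplex meets the hyperplane in a face (the vertices at that height) projecting to a face of $X_\alpha$ resp.\ $X_\beta$, so that \eqref{Intersects} follows from the simpliciality of $X$ and uniqueness of barycentric coordinates. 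Your treatment of \eqref{Ordered} matches the paper's implicit one. What your route buys is that the only genuinely new interaction --- between consecutive values of $\ell$ --- is isolated and verified explicitly rather than absorbed into ``the rest follows the proof of Lemma \ref{ComplexPiling}''; what the paper's route buys is brevity, since the affine-isomorphism trick of Lemma \ref{SimplexPiling} disposes of all values of $\ell$ at once within a single column. Both arguments are sound and rest on the same two prior lemmas.
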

\begin{proof} As in the proof of  Lemma  \ref{SimplexPiling}, applying an affine isomorphism it is easy to see that
$$
 X_\alpha\times [M_1,M_2]=\bigcup_{\substack{M_1\le\ell<M_1\\ v\in \mathrm{Vt}(X_\alpha)}}\big(X^v_\alpha+(0_V\times \ell)\big)
$$ 
 is   an ordered simplicial $(p+1)$-complex. The rest follows the proof of Lemma \ref{ComplexPiling}.
\end{proof}
\noindent We can take $M_1\to-\infty$ and $M_2\to+\infty$ in \eqref{SalphaR}, as  we record next.
\begin{coro}\label{Minfinite} Let $X$ be as in Lemma \ref{RPiling} and let
\begin{equation*}%\label{Jinfinity}
J_\infty:=\big\{(\alpha,v,\ell)\big|\,\alpha\in  \mathfrak{I},\ v\in \mathrm{Vt}(X_\alpha),\ \ell\in\Z  \big\}.
 \end{equation*}
Then
$X\times  \R=\bigcup_{\gamma\in J_\infty} X^\gamma $
 is  an ordered simplicial $(p+1)$-complex.
\end{coro}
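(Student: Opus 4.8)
The plan is to deduce the corollary from Lemma \ref{RPiling} by an exhaustion argument, exploiting the fact that both defining properties of an ordered simplicial complex---the face-intersection condition \eqref{Intersects} and the order-compatibility condition \eqref{Ordered}---only constrain pairs of simplices at a time, so they are ``finitary'' in the index set and pass automatically from finite sub-unions to the whole.

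First I would verify the point-set equality. Since $\R=\bigcup_{m\in\N}[-m,m]$, and by \eqref{YUpgamma} the simplices $X^\gamma$ with $\gamma=(\alpha,v,\ell)$ and $-m\le\ell<m$ are precisely the ones occurring in the decomposition $X\times[-m,m]=\bigcup_\gamma X^\gamma$ furnished by Lemma \ref{RPiling} (taking $M_1=-m$, $M_2=m$ in \eqref{InductiveJ}), we get
$$
X\times\R=\bigcup_{m\in\N}\big(X\times[-m,m]\big)=\bigcup_{\gamma\in J_\infty}X^\gamma.
$$
Moreover each $X^\gamma$ is an ordered $(p+1)$-simplex: by \eqref{YUpgamma} it is $X^v_\alpha+(0_V\times\ell)$, and $X^v_\alpha$ is an ordered $(p+1)$-simplex by Lemma \ref{SimplexPiling} applied to $S_\prec:=X_{\alpha,\prec_\alpha}$, while translation by the vector $0_V\times\ell$ preserves affine independence and, by \eqref{neworderX}, the ordering $\prec_\gamma$.

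For the simplicial and order-compatibility conditions I would take arbitrary indices $\gamma=(\alpha,v,\ell)$ and $\gamma'=(\beta,w,\ell')$ in $J_\infty$ and set $m:=\max(|\ell|,|\ell'|)+1$. Then both $X^\gamma$ and $X^{\gamma'}$ appear among the simplices of the decomposition $X\times[-m,m]$, which is an ordered simplicial $(p+1)$-complex by Lemma \ref{RPiling}. Consequently $X^\gamma\cap X^{\gamma'}$ is either empty or equals the convex hull of $\mathrm{Vt}(X^\gamma)\cap\mathrm{Vt}(X^{\gamma'})$, which is \eqref{Intersects}, and the orders $\prec_\gamma$ and $\prec_{\gamma'}$ agree on $\mathrm{Vt}(X^\gamma)\cap\mathrm{Vt}(X^{\gamma'})$, which is \eqref{Ordered}. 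Since $\gamma,\gamma'$ were arbitrary, $\bigcup_{\gamma\in J_\infty}X^\gamma$ is an ordered simplicial $(p+1)$-complex, as claimed.

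There is essentially no obstacle here; the entire content is the remark that the complex axioms involve only pairs of cells, so the passage from the bounded intervals $[M_1,M_2]$ of Lemma \ref{RPiling} to all of $\R$ is immediate. The only points requiring a moment's care are that the half-open indexing condition $M_1\le\ell<M_2$ in \eqref{InductiveJ} makes the finite decompositions genuinely nest as $m\to\infty$ and exhaust $J_\infty$, and that translation by $0_V\times\ell$ respects the ordered-simplex structure---both of which are built into the definitions in \S\ref{RaisingDim} and \eqref{YUpgamma}.
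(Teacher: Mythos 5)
Your proof is correct and is essentially the paper's own argument: the paper justifies Corollary \ref{Minfinite} simply by remarking that one may let $M_1\to-\infty$ and $M_2\to+\infty$ in \eqref{SalphaR}, and your exhaustion argument (the point-set union plus the observation that \eqref{Intersects} and \eqref{Ordered} are pairwise conditions checkable inside some $X\times[-m,m]$) is exactly the precise form of that remark.
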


  Lemma \ref{RPiling} shows that any $\rho\in X\times [M_1,M_2]$ belongs to some simplex $ X^\gamma$ with $\gamma=(\alpha,v,\ell)$. Our next result shows that $v$ can be chosen to be a spanning vertex of the projection $\pi_V(\rho)\in X$.
\begin{lemm}\label{Spanv} Let $X,
  M_1$ and $ M_2$ be as in Lemma \ref{RPiling}. Suppose $\rho\in X\times [M_1,M_2]$ and $\pi_V(\rho)\in X_\alpha$. Then there is some $v\in\Sp\big(\pi_V(\rho)\big)\subset\mathrm{Vt}(X_\alpha)$ and an integer $\ell$ satisfying $M_1\le \ell< M_2$ such that $\rho\in X^\gamma$,  where $\gamma=(\alpha,v,\ell)$.
\end{lemm}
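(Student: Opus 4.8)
The plan is to reduce Lemma~\ref{Spanv} to the explicit combinatorial description of the simplices $X^\gamma$ given in Lemma~\ref{RPiling} and the underlying piling construction of Lemma~\ref{SimplexPiling}. First I would fix the simplex $X_\alpha$ with $\pi_V(\rho)\in X_\alpha$ and work inside $X_\alpha\times[M_1,M_2]$, which by (the proof of) Lemma~\ref{RPiling} decomposes as $\bigcup_{v\in\mathrm{Vt}(X_\alpha),\,M_1\le\ell<M_2}\big(X^v_\alpha+(0_V\times\ell)\big)$. So there exist \emph{some} $v\in\mathrm{Vt}(X_\alpha)$ and $\ell$ with $M_1\le\ell<M_2$ and $\rho\in X^\gamma$, $\gamma=(\alpha,v,\ell)$; the content of the lemma is that $v$ can be taken in $\Sp(\pi_V(\rho))$. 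After translating by $-(0_V\times\ell)$ we may assume $\ell=0$, i.e.\ $\rho\in X^v_\alpha\subset X_\alpha\times I$.

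Next I would exploit property~$(a)$ recorded after Lemma~\ref{SimplexPiling}: the projection $\pi_V$ maps $\mathrm{Vt}(X^v_\alpha)$ onto $\mathrm{Vt}(X_\alpha)$, bijectively except that both $v\times 0$ and $v\times 1$ map to $v$. Writing $\rho=\sum_{w\in\mathrm{Vt}(X^v_\alpha)}t_w\,w$ in barycentric coordinates and applying the affine map $\pi_V$ gives $\pi_V(\rho)=\sum_w t_w\,\pi_V(w)$, and collecting the two preimages of $v$ shows that the barycentric coordinate of $\pi_V(\rho)$ at a vertex $u\ne v$ of $X_\alpha$ equals the single $t_w$ with $\pi_V(w)=u$, while the coordinate at $v$ equals $t_{v\times 0}+t_{v\times 1}$. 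Hence $\Sp(\pi_V(\rho))$ consists of those $u\ne v$ with $t_w>0$ for the corresponding $w$, together with $v$ itself precisely when $t_{v\times 0}+t_{v\times 1}>0$.

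The key step is then to argue that one may choose a spanning vertex $v'$ of $\rho$ in $X^v_\alpha$ lying over a point of $\Sp(\pi_V(\rho))$, and that $\rho$ in fact lies in $X^{v'}_\alpha$ for that choice. Order $\mathrm{Vt}(X_\alpha)=\{v_0\prec_\alpha\cdots\prec_\alpha v_p\}$ and suppose $v=v_j$. The faces ${X^{v_j}_\alpha}^{\pm}$ from \eqref{RoofFloor} show that $X^{v_j}_\alpha$ and $X^{v_{j'}}_\alpha$ for adjacent indices share the common facet obtained by either lifting $v_j$ to level $0$ or to level $1$; concretely, ${X^{v_j}_\alpha}^{-}$ is a face of $X^{v_{j-1}}_\alpha$ and ${X^{v_j}_\alpha}^{+}$ is a face of $X^{v_{j+1}}_\alpha$. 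So if $v=v_j\notin\Sp(\pi_V(\rho))$, i.e.\ $t_{v\times 0}+t_{v\times 1}=0$, then $\rho$ lies on both facets ${X^{v_j}_\alpha}^{-}$ and ${X^{v_j}_\alpha}^{+}$, hence in $X^{v_{j-1}}_\alpha\cap X^{v_{j+1}}_\alpha$ (when these exist); by \eqref{spaninface} the spanning set is unchanged on passing to a face, so we may replace $v_j$ by a neighbour. Iterating, and using the boundary cases at $j=0$ and $j=p$ (where $t_{v_0\times 0}=0$ forces $\rho$ into a face of $X^{v_1}_\alpha$, and $t_{v_p\times 1}=0$ into a face of $X^{v_{p-1}}_\alpha$), we reach some $v_{j^*}\in\Sp(\pi_V(\rho))$ with $\rho\in X^{v_{j^*}}_\alpha$. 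Translating back by $(0_V\times\ell)$ gives the claim with $\gamma=(\alpha,v_{j^*},\ell)$.

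The main obstacle I expect is the bookkeeping in the induction of the last paragraph: making precise exactly which facet of $X^{v_j}_\alpha$ one lands on when $t_{v_j\times 0}+t_{v_j\times 1}=0$ versus when only one of $t_{v_j\times 0}$, $t_{v_j\times 1}$ vanishes, and checking the endpoint behaviour at $j=0,p$ so that the induction does not run off the ends of the list $v_0\prec_\alpha\cdots\prec_\alpha v_p$. The cleaner alternative, which I would use if the direct induction gets unwieldy, is to transport everything through the affine isomorphism $\widehat A$ of Lemma~\ref{SimplexPiling} to the standard decomposition of $\Delta_p\times I$, where the simplices are the explicit coordinate simplices $\{1\ge x_1\ge\cdots\ge x_{p-j}\ge x_{p+1}\ge x_{p-j+1}\ge\cdots\ge x_p\ge 0\}$ and the statement becomes a transparent inequality chase on which coordinate is allowed to move to level $0$ or $1$; since $\widehat A$ and $A$ are affine isomorphisms they preserve barycentric coordinates, spanning vertices, and the projection $\pi_V$, so the reduction is lossless.
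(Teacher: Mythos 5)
Your proposal is correct and rests on the same ingredients as the paper's proof: the explicit vertex description of the piled simplices $X^v_\alpha$ and the identifications $\big(X^{v_j}_\alpha\big)^{-}=\big(X^{v_{j-1}}_\alpha\big)^{+}$, $\big(X^{v_j}_\alpha\big)^{+}=\big(X^{v_{j+1}}_\alpha\big)^{-}$. The only difference is presentational: where you iterate through adjacent simplices one step at a time (with the attendant termination bookkeeping), the paper writes out $\Sp(\rho)$ explicitly inside $X^{(\alpha,w,\ell)}$ and jumps in one step to $v:=v_t$ or $v:=w_1$, the spanning vertex of $\pi_V(\rho)$ nearest to $w$ in the order $\prec_\alpha$, checking directly that $\Sp(\rho)\subset\mathrm{Vt}(X^{(\alpha,v,\ell)})$.
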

\begin{proof} Lemma \ref{RPiling} shows  that $\rho\in X^\Delta$ for some $\Delta=(\alpha,w,\ell)\in  J$.   If $w\times \ell$ or $w\times(\ell+1)$ is a spanning vertex of $\rho$ (with respect to $X^\Delta$), then $w=\pi_V(w\times \ell)=\pi_V\big(w\times(\ell+1)\big)$ is a spanning vertex of $\pi_V(\rho)$, and we may pick $v:=w$. Otherwise,
\begin{equation}\label{spanrho}
\Sp(\rho)=\big\{v_1\times (\ell+1),\ldots,v_t\times(\ell+1),w_1\times \ell,\ldots,w_r\times \ell  \big\},
\end{equation}
where, by      definition \eqref{SalphaR} of $ X^\gamma$ and  definition \eqref{StimesI} of $X_\alpha^v$, the  $v_i $ and $w_j$ are vertices of $X_\alpha$ satisfying
$$
v_1\prec_\alpha v_2\prec_\alpha \cdots\prec_\alpha v_t\prec_\alpha w\prec_\alpha w_1\prec_\alpha w_2\prec_\alpha \cdots\prec_\alpha w_r.
$$
Applying $\pi_V$ to \eqref{spanrho} we obtain
$$
\Sp\big(\pi_V(\rho)\big)=\big\{v_1,\ldots,v_t,w_1,\ldots,w_r\big\}.
$$
If $t>0$, let $v:=v_t$.
Otherwise,   let $v:=w_1$. Then,  from \eqref{spanrho}, \eqref{SalphaR}  and   \eqref{StimesI},
$\Sp(\rho) \subset \mathrm{Vt}( X^\gamma)$. Hence $\rho\in X^\gamma$ and $v\in \Sp\big(\pi_V(\rho)\big)$.
\end{proof}

\subsection{The complex $Y$} Throughout this subsection we assume,   as in Proposition  \ref{MainInduction}, that  $M_1<M_2$ are integers and  that
$X =\bigcup_{\alpha\in \mathfrak{I}} X_{\alpha,\prec_\alpha}  \subset V$ is a simplicial $p$-complex contained in some vector space $V$, with compatible orders $\prec_\alpha$ in the sense of \eqref{Ordered}. So far in this section we have made no use of the linear function $\omega:V\to\R$  in Proposition \ref{MainInduction}. Now we will  use  $\omega$ to make two changes in the  construction of the simplices $X^\gamma$ in Lemma \ref{RPiling}. First we will replace the given  order  $\prec_\alpha$ on the vertices of $X_\alpha$ by a new order $\prec_\alpha^A$ which   depends on  $\omega$. Then, using the piecewise affine map $T$ defined below, we will  make a change in the last coordinate of elements of $X\times\R$   to obtain the simplices $Y_\gamma$ in Proposition \ref{MainInduction}.

Recall that  for $u,u^\prime\in  \mathrm{Vt}(X_\alpha)$ we defined in \eqref{SalphaA0}
  \begin{equation}\label{SalphaA}
u\prec_\alpha^A u^\prime\ \Longleftrightarrow\
\Big[
\big[A(u)<A(u^\prime)\big]\ \ \mathrm{or}\ \ \big[A(u)=A(u^\prime)\ \mathrm{and}\  u \prec_\alpha u^\prime\big] \Big],
  \end{equation}
where $A$ is the upper fractional part   of $\omega$. A trivial verification shows that
if the orders $\prec_\alpha$ are compatible, then so are the orders $\prec_\alpha^A $.

 Applying Lemma \ref{RPiling}  to the ordered  simplicial complex $X:=\bigcup_{\alpha\in\mathfrak{I}} X_{\alpha,\prec_\alpha^A}$,   we obtain the simplicial complex
\begin{equation}\label{Halfnewcomplex}
X\times[M_1,M_2]=  \bigcup_{\gamma\in J} X^{\gamma,A} =:X^A,
  \end{equation}
 where we have  called the   new simplices $X^{\gamma,A}$ (instead of $X^\gamma$) to clarify that  the original order $\prec_\alpha$ on the vertices of $X_\alpha$  has been replaced by $\prec_\alpha^A$.\footnote{\ We do not consider $X^A$ as an ordered complex since this is not  the complex $Y$ appearing in Proposition \ref{MainInduction}. To get $Y$ we will still need to apply the map $T$ studied in Lemma \ref{NiceT}  below.} We have also denoted by $X^A$ the corresponding simplicial decomposition of $X\times [M_1,M_2]$. By Corollary \ref{Minfinite}, we also  get a simplicial complex
\begin{equation*}%\label{Halfnewinfinitecomplex}
X\times\R=  \bigcup_{\gamma\in J_\infty} X^{\gamma,A}=:X^A_\infty.
  \end{equation*}

For $\gamma=(\alpha,v,\ell)\in J_\infty$, \ie $\alpha\in \mathfrak{I}$,  $v\in\mathrm{Vt}(X_\alpha)$ and $\ell\in\Z$, let
$$
T_\gamma:=V\times\R\to V\times\R
$$
 be the unique affine function
which on any  vertex $\sigma \in \mathrm{Vt}\big(X^{\gamma,A} \big)$ satisfies 
\begin{equation}\label{Talphav}
T_\gamma(\sigma) := \sigma\ +\  \big(0_V\times (A(\pi_V(\sigma) )-\omega (\pi_V(\sigma))-1)\big)
 \quad\big(\sigma \in \mathrm{Vt} (X^{\gamma,A} ) \big),
\end{equation}
where $\pi_V:V\times\R\to V$ is the projection to the first component and $0_V$ is the origin in $V$.
 Actually,
 %\begin{equation}\label{TalphavZ}
$T_\gamma(\sigma)  \in X\times\Z$
 %\qquad \qquad\qquad\qquad\big(\sigma\in \mathrm{Vt}(X^{\gamma,A}) \big) \end{equation}
since $\sigma\in X\times \Z$ by  \eqref{StimesI} and \eqref{YUpgamma}, while  $A(v)-\omega(v)\in\Z$ by \eqref{Avchar0}.
Since $T_\gamma(\sigma) $ depends only on $\sigma$ (and not on the simplex
$X^{\gamma,A}$   to which it belongs) and $X^A_\infty$ is a simplicial complex,
there is a unique piecewise affine function %$ T:X\times \R\to X\times \R$ such that for
%  $\rho\in X^{\gamma,A}$ we have $\big($see \eqref{affinemap}$\big)$
\begin{equation}\label{Talpharho}
 T:X\times \R\to X\times \R,\quad T(\rho)=T_\gamma(\rho)  :=  \sum_{\sigma\in \mathrm{Vt}(X^{\gamma,A})} c_\sigma T_\gamma(\sigma) \quad(\rho\in X^{\gamma,A}),
\end{equation}
where
\begin{equation}\label{Rho}
\rho=\sum_{\sigma\in \mathrm{Vt}(X^{\gamma,A})} c_\sigma \sigma\qquad \qquad \qquad
 \Big(\sum_{\sigma\in \mathrm{Vt}(X^{\gamma,A})} c_\sigma=1,\ \,c_\sigma\ge0\Big).
\end{equation}
Note that $T(\rho)=\rho$ if $\omega$ vanishes identically.
\begin{lemm}\label{NiceT} Suppose $X =\bigcup_{\alpha\in \mathfrak{I}} X_{\alpha,\prec_\alpha}  \subset V$ is a simplicial complex   in some vector space $V$, with compatible orders $\prec_\alpha$.  Then the piecewise affine function $T:X\times \R\to X\times \R$ defined in \eqref{Talpharho} is a bijection. It satisfies the identities
\begin{equation}\label{TVt}
\pi_V\circ T=\pi_V,\quad T\big((s\times t)+(0_V\times t^\prime)\big) =T(s\times t)\,+\,(0_V\times t^\prime)\quad(s\in X,\ t,t^\prime\in\R),
\end{equation}
 and
\begin{equation}\label{Tclear}
T(s\times  t)= s\times (r_s+t) \qquad\qquad \big(s\in X,\ t\in\R,\  s\times r_s :=T(s\times 0)\big).
\end{equation}
\end{lemm}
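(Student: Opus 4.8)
The strategy is to reduce everything to the behavior of $T$ on a single simplex $X^{\gamma,A}$ and then glue. First I would establish \eqref{TVt}: the identity $\pi_V\circ T=\pi_V$ holds on vertices of each $X^{\gamma,A}$ by inspection of \eqref{Talphav}, since the correction term $0_V\times(A(\pi_V(\sigma))-\omega(\pi_V(\sigma))-1)$ has vanishing $V$-component; because $\pi_V$ and $T_\gamma$ are both affine on $X^{\gamma,A}$ and agree composed on vertices, they agree on the whole simplex, and hence $\pi_V\circ T=\pi_V$ on all of $X\times\R$. The vertical-translation equivariance in \eqref{TVt} follows similarly: adding $0_V\times t'$ does not change which simplex $X^{\gamma,A}$ a point lies in modulo the $\R$-direction—more precisely, by Corollary \ref{Minfinite} the decomposition $X^A_\infty$ is invariant under integer translations in the last coordinate, and the affine map $T_\gamma$ visibly commutes with translation by $0_V\times t'$ on vertices, hence everywhere.

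The key technical point is \eqref{Tclear}, which says $T$ acts on each fiber $\{s\}\times\R$ as a pure translation by an amount $r_s$ depending only on $s$. I would argue: fix $s\in X$ and vary $t$. By \eqref{TVt} we already know $T(s\times t)=s\times h_s(t)$ for some function $h_s:\R\to\R$. The function $T$ is piecewise affine in $t$ along this fiber (the fiber meets finitely many or countably many simplices $X^{\gamma,A}$, and on each piece $T_\gamma$ is affine). On a single simplex $X^{\gamma,A}$, writing $\rho=s\times t$ in barycentric coordinates $c_\sigma$, the $\R$-coordinate of $T_\gamma(\rho)$ is $\sum_\sigma c_\sigma(\pi_\R(\sigma)+A(\pi_V(\sigma))-\omega(\pi_V(\sigma))-1) = \pi_\R(\rho) + \sum_\sigma c_\sigma(A(\pi_V(\sigma))-\omega(\pi_V(\sigma))) - 1$. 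Now the crucial observation is that $A(\pi_V(\sigma))-\omega(\pi_V(\sigma))$ is an \emph{integer} for every vertex $\sigma$ (by \eqref{Avchar0}), while $\sum_\sigma c_\sigma\pi_V(\sigma)=\pi_V(\rho)=s$ is fixed; one must check that the integer $\sum_\sigma c_\sigma(A(\pi_V(\sigma))-\omega(\pi_V(\sigma)))$ is in fact independent of the barycentric weights $c_\sigma$, i.e.\ that it depends only on $s$, not on which simplex or which point of the fiber. This is where I expect the main obstacle: one has to see that the vertices $\sigma$ with $c_\sigma>0$ all share the same value of $A(\pi_V(\sigma))-\omega(\pi_V(\sigma))$ relative to $\omega(s)$, or more carefully, that $\omega(\pi_V(\sigma))$ decomposes as $\omega(s)$ plus a piece that the $A$-term compensates. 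The construction in \S\ref{RaisingDim} was rigged precisely so that within one $Y_\gamma$ (equivalently after applying $T$ to $X^{\gamma,A}$) the added coordinate is pinned between consecutive integers $A(v)+\ell-1$ and $A(v)+\ell$; running that rigidity backward through $T_\gamma^{-1}$ shows the quantity $r_s$ is well-defined.

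Once $h_s(t)=t+r_s$ is established on each affine piece with the \emph{same} constant $r_s$—using continuity of $T$ to match the constant across adjacent pieces of the fiber, together with the integrality argument above to kill any dependence on $c_\sigma$—formula \eqref{Tclear} follows, and evaluating at $t=0$ identifies $r_s$ via $s\times r_s=T(s\times 0)$. Finally, bijectivity of $T$ is then immediate: on each fiber it is the translation $t\mapsto t+r_s$, which is a bijection of $\R$ onto itself, and since $\pi_V\circ T=\pi_V$ these fiberwise bijections assemble to a bijection of $X\times\R$; the inverse is $T^{-1}(s\times t)=s\times(t-r_s)$, which is again piecewise affine (it equals $T'$ built from $-\omega$ in place of $\omega$, up to the bookkeeping of $A$). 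I would remark that the last sentence of the lemma, $T(\rho)=\rho$ when $\omega\equiv0$, drops out since then $A\equiv1$ and $A(\pi_V(\sigma))-\omega(\pi_V(\sigma))-1=0$.
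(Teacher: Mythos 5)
Your skeleton coincides with the paper's: expand $T_\gamma(\rho)$ in barycentric coordinates, observe that the $\R$-coordinate of $T_\gamma(s\times t)$ equals $t-1+\sum_\sigma c_\sigma\big(A(\pi_V(\sigma))-\omega(\pi_V(\sigma))\big)$, show the correction term depends only on $s$, read off \eqref{Tclear}, and deduce \eqref{TVt} and bijectivity (the paper does this in the reverse logical order, noting that \eqref{Tclear} implies everything else, but that is immaterial). However, at precisely the step you flag as ``the main obstacle'' the proposal does not close the gap, and the mechanisms you suggest would not work. First, $\sum_\sigma c_\sigma\big(A(\pi_V(\sigma))-\omega(\pi_V(\sigma))\big)$ is a convex combination of integers, not an integer. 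Second, it is false in general that the vertices $\sigma$ with $c_\sigma>0$ share a common value of $A(\pi_V(\sigma))-\omega(\pi_V(\sigma))$: distinct vertices $\delta$ of $X_\alpha$ have distinct $\omega(\delta)$, hence typically distinct integers $A(\delta)-\omega(\delta)$. Third, ``running the rigidity of \eqref{Tineq} backward through $T_\gamma^{-1}$'' is not an argument; \eqref{Tineq} is proved \emph{after} this lemma (Lemma \ref{ProofPartProp}(c)) and depends on it.

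The missing step is an exact bookkeeping of barycentric coordinates. By linearity of $\omega$ one has $\sum_\sigma c_\sigma\,\omega(\pi_V(\sigma))=\omega(s)$, so the only question is whether $\sum_\sigma c_\sigma A(\pi_V(\sigma))$ depends on $(s,t)$ only through $s$. The point is that $\pi_V$ maps $\mathrm{Vt}(X^{\gamma,A})$ bijectively onto $\mathrm{Vt}(X_\alpha)$ except that the two vertices $v\times\ell$ and $v\times(\ell+1)$ both map to $v$. Comparing $s=\sum_\sigma c_\sigma\pi_V(\sigma)$ with the unique barycentric expansion $s=\sum_\delta d_\delta\,\delta$ in $X_\alpha$ gives $c_{v\times\ell}+c_{v\times(\ell+1)}=d_v$ and $c_\sigma=d_{\pi_V(\sigma)}$ otherwise; since the two doubled vertices carry the \emph{same} value $A(v)$, it follows that $\sum_\sigma c_\sigma A(\pi_V(\sigma))=\sum_\delta d_\delta A(\delta)$, which is determined by $s$ alone (and is independent of the simplex containing $s\times t$, since $X$ is simplicial so the spanning vertices of $s$ do not depend on the choice of $X_\alpha$). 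Note that without this observation your fallback of ``matching the constants by continuity across adjacent pieces of the fiber'' cannot start: as $t$ varies over the fiber within a \emph{single} simplex $X^{\gamma,A}$, the weights $c_{v\times\ell}$ and $c_{v\times(\ell+1)}$ themselves vary, so one needs the cancellation above even to see that $T$ is a translation on one piece.
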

\begin{proof}
The last identity  implies the first two, and hence that $T$ is a bijection. To prove \eqref{Tclear}, let $\rho=s\times t\in  X^{\gamma,A} $, where $\gamma=(\alpha,v,\ell)$. From \eqref{Talphav},  \eqref{Talpharho} and \eqref{Rho}, we get
\begin{align}\label{Toget}
T(\rho) & = \sum_{\sigma \in \mathrm{Vt}(X^{\gamma,A})}
 c_\sigma \Big( \sigma+\big(0_V\times\big(A(\pi_V(\sigma))-\omega(\pi_V(\sigma))-1)\big)\Big)\nonumber\\ & =
 \rho -\big(0_V\times\omega(\pi_V(\rho))\big)+\big(0_V\times q\big),
\end{align}
where $ q=q(s\times t):=-1+\sum_{\sigma}  c_\sigma  A\big(\pi_V(\sigma)\big) .$

Now,   $\pi_V$ restricted to $\mathrm{Vt}(X^{\gamma,A}) $ is a bijection onto  $\mathrm{Vt}(X_\alpha)$,
   except for the two vertices $v\times \ell$ and $v\times ( \ell+1)$, both of which map to $v$ (see \eqref{YUpgamma} and $(a)$ after the proof of  Lemma \ref{SimplexPiling}). From \eqref{Rho} we have
   $$
   s= \pi_V(s\times t) =
   \sum_{\sigma\in \mathrm{Vt}(X^{\gamma,A})}c_\sigma\pi_V(\sigma)= \sum_{\delta\in\mathrm{Vt}(X_\alpha)} d_\delta\delta ,
   $$
 where in the last equation we have simply written the barycentric expansion of $s$  with respect to $X_\alpha$ $\big($see the remarks after \eqref{Simplex}$\big)$. Since barycentric
coordinates with respect to a simplex are unique, we have $c_{v\times \ell}+c_{v\times ( \ell+1)}=d_v$ and $c_\sigma=d_{\pi_V(\sigma)}$ for $\pi_V(\sigma)\not=v$.
Hence
$$
q(s\times t):=-1+\sum_{\sigma } c_\sigma  A\big(\pi_V(\sigma)\big) =-1+\sum_{\delta\in\mathrm{Vt}(X_\alpha)} d_\delta A(\delta).
$$
But the $d_\delta$ are uniquely determined by $s$, so  $q(s\times t)=q(s\times 0)$. Now \eqref{Tclear}   follows from \eqref{Toget}.\end{proof}

By the lemma just proved, the affine function $T_\gamma:V\times\R\to V\times\R$ is injective when restricted to a non-empty open subset of $V\times\R$, namely on the interior
of any  $X^{\gamma,A}$. Hence $T_\gamma$ is an affine  bijection. Thus
$$
T(X^{\gamma,A})=T_\gamma(X^{\gamma,A})\subset   X \times\R
$$
is a  $(p+1)$-simplex and

$$
T(X\times[M_1,M_2])=\bigcup_{\gamma\in J} T( X^{\gamma,A})
$$
is a   $(p+1)$-complex, the  isomorphic image by $T$ of the $(p+1)$-complex   in  \eqref{Halfnewcomplex}
\begin{equation}\label{XA}
X^A=X\times[M_1,M_2]=\bigcup_{\gamma\in J}  X^{\gamma,A}.
\end{equation}
Since, as  remarked  before  \eqref{Halfnewcomplex}, $X^A$ is a simplicial complex,  so is $T(X^A)$.

We can now prove part of Proposition  \ref{MainInduction}.

\begin{lemm}\label{ProofPartProp}With  the hypotheses and notation of Proposition \ref{MainInduction}, the following hold.

\vskip.1cm
\noindent$\mathrm{(a)}\ $ $Y_\gamma =T(X^{\gamma,A}), \ \,\mathrm{Vt}( Y_\gamma)= T\big( \mathrm{Vt}(X^{\gamma,A} ) \big)\ \,(\gamma\in J)$, and so   by \eqref{XA}, $Y=T(X\times [M_1,M_2])$.

\vskip.1cm
\noindent$\mathrm{(b)}\   Y= Y(X,\omega,[M_1,M_2])=\bigcup_{\gamma\in J} Y_{\gamma,\prec_\gamma} $ is an ordered  simplicial $(p+1)$-complex and the  orders $\prec_\gamma$ are compatible.

\vskip.1cm
\noindent$\mathrm{(c)}\  $Define  $\Omega:V\times\R\to\R$ by $ \Omega(s\times t):=\omega(s)+t$. Then for all  $\kappa\in Y_\gamma$,
\begin{equation*}%\label{Tineq1}
A(v)+\ell-1\le\Omega(\kappa)\le A(v)+\ell\qquad\qquad\qquad (\gamma=(\alpha,v,\ell)\in J ).
\end{equation*}
\end{lemm}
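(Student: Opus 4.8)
The plan is to derive all three parts from one observation: the piecewise affine bijection $T$ of Lemma \ref{NiceT} carries the simplicial complex $X^A=X\times[M_1,M_2]$ of \eqref{XA} isomorphically onto $Y$. For (a) I would compute $T$ on the vertices of each $X^{\gamma,A}$: fix $\gamma=(\alpha,v,\ell)\in J$ and list $\mathrm{Vt}(X_\alpha)=\{v_0,\dots,v_p\}$ with $v_0\prec_\alpha^A v_1\prec_\alpha^A\cdots\prec_\alpha^A v_p$ and $v=v_j$. By \eqref{StimesI} and \eqref{YUpgamma}, applied with the order $\prec_\alpha^A$, the vertices of $X^{\gamma,A}$ are $v_i\times(\ell+1)$ for $0\le i\le j$ and $v_i\times\ell$ for $j\le i\le p$; feeding these into \eqref{Talphav} gives $T\bigl(v_i\times(\ell+1)\bigr)=v_i\times\bigl(A(v_i)-\omega(v_i)+\ell\bigr)$ for $i\le j$ and $T(v_i\times\ell)=v_i\times\bigl(A(v_i)-\omega(v_i)+\ell-1\bigr)$ for $i\ge j$, which is exactly the list of vertices of $Y_\gamma$ in \eqref{Newvertices}. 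Since $T$ agrees on $X^{\gamma,A}$ with the affine bijection $T_\gamma$ (as noted after the proof of Lemma \ref{NiceT}), $T(X^{\gamma,A})$ is a $(p+1)$-simplex with vertex set $T(\mathrm{Vt}(X^{\gamma,A}))$, so the vertex computation forces $Y_\gamma=T(X^{\gamma,A})$ and $\mathrm{Vt}(Y_\gamma)=T(\mathrm{Vt}(X^{\gamma,A}))$. Taking the union over $\gamma\in J$ and using \eqref{XA} then gives $Y=T(X^A)=T(X\times[M_1,M_2])$, which is (a).

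For (b), simpliciality of $Y$ is immediate: $X^A$ is simplicial (as remarked before \eqref{Halfnewcomplex}) and, being affine and vertex-to-vertex on each simplex, $T$ is an isomorphism of complexes, so $Y=T(X^A)$ satisfies \eqref{Intersects}. That each $\prec_\gamma$ of \eqref{gammaOrder} is a total order on $\mathrm{Vt}(Y_\gamma)$ is the same verification as for \eqref{neworder}, using that $\pi_V$ is injective on $\mathrm{Vt}(Y_\gamma)$ except on the two preimages of $v=v_j$, whose last coordinates differ by $1$. For compatibility, take $\rho,\rho'\in\mathrm{Vt}(Y_\gamma)\cap\mathrm{Vt}(Y_{\gamma'})$ with $\gamma=(\alpha,v,\ell)$ and $\gamma'=(\alpha',v',\ell')$; by the remark after \eqref{Newvertices}, $\pi_V(\rho),\pi_V(\rho')\in\mathrm{Vt}(X_\alpha)\cap\mathrm{Vt}(X_{\alpha'})$. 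If $\pi_V(\rho)\ne\pi_V(\rho')$, then \eqref{gammaOrder} makes both $\rho\prec_\gamma\rho'$ and $\rho\prec_{\gamma'}\rho'$ equivalent to the comparison of $\pi_V(\rho)$ and $\pi_V(\rho')$ under $\prec_\alpha$, resp.\ $\prec_{\alpha'}$, and these agree by the compatibility \eqref{Ordered} of the orders on $X$; if $\pi_V(\rho)=\pi_V(\rho')$, both reduce to $\pi_\R(\rho')<\pi_\R(\rho)$, which is independent of the simplex. Hence the $\prec_\gamma$ are compatible.

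For (c), since $\Omega(s\times t)=\omega(s)+t$ is linear, on the simplex $Y_\gamma$ it takes values between its minimum and maximum over $\mathrm{Vt}(Y_\gamma)$, so it is enough to bound $\Omega$ at the vertices. By the vertex list from (a), $\Omega$ equals $A(v_i)+\ell$ at $v_i\times(A(v_i)-\omega(v_i)+\ell)$ for $0\le i\le j$, and $A(v_i)+\ell-1$ at $v_i\times(A(v_i)-\omega(v_i)+\ell-1)$ for $j\le i\le p$. Since $v_0\prec_\alpha^A\cdots\prec_\alpha^A v_p$ sorts the $v_i$ by nondecreasing $A$-value (see \eqref{SalphaA}), we have $A(v_i)\le A(v)$ for $i\le j$ and $A(v_i)\ge A(v)$ for $i\ge j$; combining this with $0<A(\cdot)\le1$ from \eqref{Avchar0} gives, for $i\le j$, $A(v)+\ell-1\le\ell<A(v_i)+\ell\le A(v)+\ell$, and for $j\le i\le p$, $A(v)+\ell-1\le A(v_i)+\ell-1\le\ell<A(v)+\ell$. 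In either case the value lies in $[A(v)+\ell-1,\,A(v)+\ell]$, which is \eqref{Tineq}.

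I do not expect a genuine obstacle; the proof is essentially bookkeeping. The one point requiring care is that two orderings of $\mathrm{Vt}(X_\alpha)$ are in play: the order $\prec_\alpha^A$ controls the order in which the vertices of $Y_\gamma$ are written in \eqref{Newvertices} (and so enters part (c) through the $A$-sorting), whereas the original $\prec_\alpha$ is what appears in the definition \eqref{gammaOrder} of $\prec_\gamma$ (and so in the compatibility argument of part (b)); one must also keep track of the fact that the vertex $v=v_j$ occurs twice in $Y_\gamma$, with consecutive last coordinates.
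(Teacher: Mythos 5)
Your proposal is correct and follows essentially the same route as the paper: (a) by computing $T_\gamma$ on the vertices of $X^{\gamma,A}$ and matching the list \eqref{Newvertices}, (b) by transporting simpliciality through the isomorphism $T$ and reducing compatibility of $\prec_\gamma$ to that of $\prec_\alpha$, and (c) by evaluating the linear function $\Omega$ at the vertices. The only cosmetic differences are that the paper first reduces to $\ell=0$ by translation, and in (c) it organizes the vertex estimate as a case analysis on $t$ and on whether $A(\delta)=A(v)$, whereas you package the same facts ($0<A\le 1$ and the $A$-monotonicity of the order $\prec_\alpha^A$) into a single inequality chain.
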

 \begin{proof}
The second and third equations in (a)  follow directly from the first one and Lemma \ref{NiceT}. To prove the first equation in (a), we start with the case  $\gamma=(\alpha,v,0)$ with $\alpha\in \mathfrak{I}$ and $v\in\mathrm{Vt}(X_\alpha)$. Unraveling    definition \eqref{Halfnewcomplex},  $X^{\gamma,A}=S^v$, where we have applied Lemma \ref{SimplexPiling} to $S_\prec:= X_{\alpha, \prec_\alpha^A}$. If we order   the vertices of $X_\alpha$ as $v_0\prec_\alpha^A v_1\prec_\alpha^A \cdots\prec_\alpha^A v_p$,  then  by \eqref{StimesI}  the vertices of $X^{\gamma,A}$ are
$$
\big\{v_0\times 1,v_1\times 1,\ldots,v_j\times 1,v_j\times 0,v_{j+1}\times 0,\ldots,v_p\times 0\}\qquad\qquad\qquad(v_j=v).
$$
Applying definition \eqref{Talphav} of $T_\gamma$, the vertices of  $T(X^{\gamma,A})=T_\gamma(X^{\gamma,A})$  are
\begin{align*}
 \{v_0&\times \big(A( v_0)-\omega (v_0)\big),\ldots,v_j\times \big(A( v_j)-\omega (v_j)\big),v_j\times  \big(A( v_j)-\omega (v_j)-1\big),\\ &v_{j+1}\times \big(A( v_{j+1})-\omega (v_{j+1})-1\big),\ldots,
v_p\times \big(A( v_p)-\omega (v_p)-1\big)\big\} .
\end{align*}
Comparing this with the definition \eqref{Newvertices} of the vertices of $Y_\gamma$, we find that we  have proved (a) for
$\gamma=(\alpha,v,0)$. This implies (a) for any $\gamma=(\alpha,v, \ell)$  since  $Y_{(\alpha,v,\ell)}=Y_{(\alpha,v,0)}+(0_V\times\ell)$  by \eqref{Newvertices}, while  by \eqref{YUpgamma} and \eqref{TVt}
$$
X^{(\alpha,v,\ell),A}=X^{(\alpha,v,0),A}+(0_V\times\ell),\quad T\big((s\times t)+(0_V\times\ell)\big)=
T\big((s\times t)\big)+(0_V\times\ell).
$$

As we remarked just before stating Lemma  \ref{ProofPartProp}, the $(p+1)$-complex $\bigcup_{\gamma\in J} T( X^{\gamma,A})$ is simplicial. By (a),  $T([X\times[M_1,M_2])=\bigcup_{\gamma\in J}Y_\gamma=:Y$ is simplicial.
This proves (b) since the assumed compatibility of the $\prec_\alpha$ immediately implies that of  the   $\prec_\gamma$ defined in \eqref{gammaOrder}.

We now prove (c). Just as in the proof of (a), it suffices to take $\gamma=(\alpha,v,0)$.
 Since $\Omega$ is linear, it suffices to prove (c) for all vertices $\kappa\in \mathrm{Vt}(Y_\gamma)$.
 Since  $\ell=0$,  by (a) we have  $\kappa=T(\delta\times t)$, where $\delta\in  \mathrm{Vt}(X_\alpha)$, $t=1$ if $\delta\prec_{\alpha}^A v$ and $t=0$ if $v\prec_{\alpha}^A \delta$. By \eqref{Talphav},
\begin{equation}\label{OmegaVt}
\Omega(\kappa)=\Omega( \delta\times t)+\Omega\big(0_V\times (A(\delta)-\omega(\delta)-1)\big)=t+A(\delta)-1.
\end{equation} There are two vertices with $\delta=v$, namely $T(v\times 0) $ and $ T(v\times1).$
From \eqref{OmegaVt} we have  $\Omega\big(T(v\times0)\big)= A(v)-1$ and  $\Omega\big(T(v\times1)\big)= A(v)$, proving  (c)  for the   vertices $ T(v\times0)$ and $ T(v\times1)$.

We may therefore restrict to $\kappa=T(\delta\times t)\in \mathrm{Vt}(Y_\gamma)$ with $\delta\not=v$.
We consider first the case $A(\delta)=A(v)$. Then  \eqref{OmegaVt} gives
 $\Omega(\kappa)=A(v)-1$ if $t=0$, while if $t=1$ we have  $\Omega(\kappa)=A(v)$. Hence
  (c)  holds if  $A(\delta)=A(v)$. If  $A(\delta)\not=A(v)$, then by definition \eqref{SalphaA} of the order
  $\prec_{\alpha}^A $, we have $t=1$ if and only if $A(\delta)<A(v)$.
If $t=1$, then  (c)  follows from
 $$
 A(v)-1\le0< A(\delta)= \Omega(\kappa)<A(v),
 $$
 where we again used  \eqref{OmegaVt} for the equality.
If $t=0$, so $A(\delta)>A(v)$, then
 $$  A(v)>0\ge A(\delta)-1=\Omega(\kappa)>A(v)-1,$$
 which proves  (c)  in the last remaining case.
\end{proof}
\subsection{$\Lambda$-invariance}\label{Lambdainvariance}
Although we have assumed in Proposition \ref{MainInduction} that the orders $\prec_\alpha$ on Vt$(X_\alpha)$ are invariant with respect to a lattice $\Lambda $, the new orders $\prec_{\alpha}^A$ on  Vt$(X_\alpha)$ in general are  not  $\Lambda$-invariant. We shall prove now that this deviation  is determined by
\begin{equation}\label{deltavl}
\Delta_{v,\lambda}:=  A(v)+A(\lambda)-A(v+\lambda)=
\begin{cases}
0\ &\mathrm{if}\ A(v)+A(\lambda)\le1,\\
1\ &\mathrm{if}\ A(v)+A(\lambda)>1,
\end{cases}
  \end{equation}
where the last equality  used \eqref{Avchar0} and the  linearity of    $\omega$.

\begin{lemm}\label{Aordering}  Let $\Lambda\subset V$ be a  lattice for which   $X =\bigcup_{\alpha\in J} X_{\alpha,\prec_\alpha} $ has  $\Lambda$-invariant orders $\prec_\alpha$ $\big($see \eqref{translation}$\big)$,  let  $\omega:V\to\R$ be a linear function, define  $\prec_{\alpha}^A$  and $\Delta$ as  in \eqref{SalphaA}  and  \eqref{deltavl}. Suppose  $v$ and $w$ are vertices of  $X_\alpha $ and, for some $\lambda\in\Lambda$, $v+\lambda$ and $w +\lambda$ are vertices of   $X_{\alpha^\prime}$  ($\alpha,\,\alpha^\prime\in J$).

Then,  translation  preserves order if $\Delta_{v,\lambda}= \Delta_{w,\lambda}$, \ie
\begin{equation}\label{BAT1}
 v\prec_{\alpha}^A w \ \Longleftrightarrow \   v+\lambda\prec_{\alpha^\prime}^A w+\lambda \qquad \qquad (\Delta_{v,\lambda}= \Delta_{w,\lambda}),
\end{equation}
while it reverses order if $\Delta_{v,\lambda}\not= \Delta_{w,\lambda}$, \ie
\begin{equation}\label{BAT2}
 v\prec_{\alpha}^A w   \ \Longleftrightarrow \   w+\lambda\prec_{\alpha^\prime}^A v +\lambda\ \qquad \qquad ( \Delta_{v,\lambda}\not= \Delta_{w,\lambda}).
\end{equation}
Moreover,
\begin{equation}\label{AveqAw}
 A(v)=A(w)\  \Longleftrightarrow \   A(v+\lambda)=A(w+\lambda) ,
\end{equation}
and
\begin{equation}\label{AveqAw2}
 A(v)=A(w) \ \Longrightarrow \ \Delta_{v,\lambda}= \Delta_{w,\lambda} .
\end{equation}
\end{lemm}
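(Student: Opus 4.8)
The plan is to prove the four displayed equivalences essentially by unwinding the definition \eqref{deltavl} of $\Delta_{v,\lambda}$ together with the characterization \eqref{Avchar0} of the upper fractional part $A$, using repeatedly the identity $A(v)+A(\lambda)=A(v+\lambda)+\Delta_{v,\lambda}$ (which is just \eqref{deltavl} rewritten). First I would record the key observation that, since $A(v+\lambda)\in(0,1]$ and $A(w+\lambda)\in(0,1]$, the difference $A(v+\lambda)-A(w+\lambda)$ lies in the open interval $(-1,1)$; the same holds for $A(v)-A(w)$. From the rewritten identity one gets
\begin{equation*}
A(v+\lambda)-A(w+\lambda)=\big(A(v)-A(w)\big)-\big(\Delta_{v,\lambda}-\Delta_{w,\lambda}\big),
\end{equation*}
and since $\Delta_{v,\lambda}-\Delta_{w,\lambda}\in\{-1,0,1\}$, comparing the two quantities in $(-1,1)$ forces a clean dichotomy: when $\Delta_{v,\lambda}=\Delta_{w,\lambda}$ the two differences are equal, while when they differ the sign of $A(v)-A(w)$ and of $A(v+\lambda)-A(w+\lambda)$ are opposite.

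Next I would use this to deduce \eqref{AveqAw} immediately (if one side is zero, the displayed identity together with the $(-1,1)$-bound forces the other side to be zero, and $\Delta_{v,\lambda}=\Delta_{w,\lambda}$ in that case, which also gives \eqref{AveqAw2}). For the order statements \eqref{BAT1} and \eqref{BAT2} I would unpack the definition \eqref{SalphaA} of $\prec_\alpha^A$: it says $u\prec_\alpha^A u'$ iff $A(u)<A(u')$, or $A(u)=A(u')$ and $u\prec_\alpha u'$. In the case $\Delta_{v,\lambda}=\Delta_{w,\lambda}$, either $A(v)=A(w)$ — in which case by \eqref{AveqAw} also $A(v+\lambda)=A(w+\lambda)$, and the equivalence \eqref{BAT1} reduces to the $\Lambda$-invariance \eqref{translation} of the original order $\prec_\alpha$ — or $A(v)\neq A(w)$, in which case both $\prec_\alpha^A$ comparisons are decided purely by the $A$-values, which have the same sign of difference on the two sides by the boxed identity above, giving \eqref{BAT1}. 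For \eqref{BAT2} we have $\Delta_{v,\lambda}\neq\Delta_{w,\lambda}$, so by \eqref{AveqAw2} necessarily $A(v)\neq A(w)$; then both $\prec_\alpha^A$ comparisons are decided by $A$-values alone, and the sign reversal from the boxed identity yields exactly \eqref{BAT2}.

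The only mildly delicate point — and the step I would treat most carefully — is the bookkeeping of which endpoint conventions ($0<A\le1$, strict versus non-strict inequalities in \eqref{deltavl}) make the dichotomy exact rather than merely an inequality; one must check that $\Delta_{v,\lambda}-\Delta_{w,\lambda}$ cannot be forced to an incompatible value, i.e. that the case $\Delta_{v,\lambda}-\Delta_{w,\lambda}=1$ with $A(v)-A(w)$ also in $[0,1)$ really does land $A(v+\lambda)-A(w+\lambda)$ in $(-1,0]$ with the right strictness, and symmetrically for $-1$. This is a short finite case analysis on the two Boolean conditions $A(v)+A(\lambda)\le1$ and $A(w)+A(\lambda)\le1$, but it is where an off-by-one in the half-open interval could bite, so I would write it out explicitly rather than wave at it. Everything else is a direct substitution into the definitions already established in \eqref{Avchar0}, \eqref{SalphaA}, \eqref{translation} and \eqref{deltavl}.
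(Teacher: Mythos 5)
Your proposal is correct and follows essentially the same route as the paper: the same identity $\big(A(v+\lambda)-A(w+\lambda)\big)-\big(A(v)-A(w)\big)=\Delta_{w,\lambda}-\Delta_{v,\lambda}\in\{-1,0,1\}$, the observation that both differences lie in $(-1,1)$ (forcing equality when the $\Delta$'s agree and strictly opposite signs when they differ), and the case split $A(v)=A(w)$ (reduce to $\Lambda$-invariance of $\prec_\alpha$) versus $A(v)\neq A(w)$ (comparison decided by $A$-values). The extra case analysis you flag at the end is not needed, since the two-numbers-in-$(-1,1)$-differing-by-$1$ argument already settles all strictness issues.
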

\begin{proof}
From \eqref{deltavl} we obtain
\begin{equation}\label{BAT}
\big( A(v+\lambda)-A(w+\lambda) \big) - \big( A(v)-A(w)\big) = \Delta_{w,\lambda}- \Delta_{v,\lambda}=-1,0,\ \mathrm{or}\ 1.
\end{equation}
As $A$ takes values in $(0,1]$,   \eqref{AveqAw} and \eqref{AveqAw2} follow.

We now turn to the first two claims in the lemma. If $A(v)=A(w)$, then  $\Delta_{w,\lambda}= \Delta_{v,\lambda}$  and   the orders $\prec_{\alpha}^A$ and $\prec_{\alpha^\prime}^A$  in   \eqref{BAT1} reduce  to $\prec_\alpha$  and $\prec_{\alpha^\prime}$, respectively. In this case  \eqref{BAT1} follows from the assumed $\Lambda$-invariance of the order  \eqref{translation}.
We may therefore suppose $A(v)\not=A(w)$. Then $A(v+\lambda)\not=A(w+\lambda)$ and the orders $\prec_{\alpha}^A$ and $\prec_{\alpha^\prime}^A$    in \eqref{BAT1} and \eqref{BAT2}  reduce  to    the usual order of $A$-values $\big($see \eqref{SalphaA}$\big)$. If $\Delta_{w,\lambda}= \Delta_{v,\lambda}$, then \eqref{BAT} shows that
$A(v)-A(w)=A(v+\lambda)-A(w+\lambda)$, so \eqref{BAT1} follows. If $\Delta_{w,\lambda}\not= \Delta_{v,\lambda}$, then $$\Big|\big( A(v+\lambda)-A(w+\lambda) \big)-\big(A(v)-A(w)\big)\Big|= 1.$$ But this is the distance between two  real numbers in the open interval $(-1,1)$, which must therefore   have opposite signs. This proves \eqref{BAT2}.
\end{proof}

 \begin{lemm}\label{Tranlate}
Let $\Lambda\subset V$ be  a full lattice in a vector space $V$, let  $X =\bigcup_{\alpha\in \mathfrak{I}} X_\alpha  \subset V $ be a  $\Lambda$-complex (see Definition $\mathrm{\ref{LambdaComplex}}$),   $\omega:V\to\R$    a linear function, $Y=\bigcup_{\gamma\in J} Y_{\gamma,\prec_\gamma}\subset V\times\R$  as in \eqref{InductiveY} and  $\pi_V:V\times\R\to V$   the  projection. Assume
  that for some $ \alpha\in \mathfrak{I},\  v\in   \mathrm{Vt}(X_\alpha)  $ and $\lambda\in\Lambda$
 we have
   $      \big(v+\lambda\big)\in \mathrm{Vt}(X_{\alpha^\prime}) $ for some $\alpha^\prime\in \mathfrak{I}.$  Suppose also that $\kappa \in  \mathrm{Vt}( Y_{(\alpha,v,\ell)} )$ and $ \big(\pi_V(\kappa)+\lambda\big)  \in \mathrm{Vt}(X_{\alpha^\prime})$.  Then $   (\kappa+\widehat{\lambda})\in \mathrm{Vt}(Y_{(\alpha^\prime,v+\lambda, \ell)}) $, where
\begin{equation}\label{shift}
 \widehat{\lambda} =\lambda\times  {j}\in\Lambda\times\Z,  \quad   {j} := A(\lambda)-\omega(\lambda)-\Delta(v,\lambda),
\end{equation}
 $A$ being as in \eqref{Avchar0} and    $\Delta$ as in \eqref{deltavl}.
\end{lemm}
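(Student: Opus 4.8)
The plan is to work entirely at the level of vertices, using the explicit formula \eqref{Newvertices}. First I would record the exact vertex set of $Y_{(\alpha,v,\ell)}$: ordering $\mathrm{Vt}(X_\alpha)$ by $\prec_\alpha^A$ with $v$ in position $j$, \eqref{Newvertices} says that its vertices are precisely the points $u\times\big(A(u)-\omega(u)+\ell-\eta\big)$ with $u\in\mathrm{Vt}(X_\alpha)$ and $\eta\in\{0,1\}$, where $\eta=0$ if $u\prec_\alpha^A v$, $\eta=1$ if $v\prec_\alpha^A u$, and both values occur if $u=v$. The same description applies to $Y_{(\alpha',v+\lambda,\ell)}$, with $v':=v+\lambda$ playing the role of $v$. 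So I would write the given vertex as $\kappa=u\times\big(A(u)-\omega(u)+\ell-\eta\big)$ with $u:=\pi_V(\kappa)\in\mathrm{Vt}(X_\alpha)$ and $\eta$ governed by the rule above.

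Next I would translate by $\widehat\lambda=\lambda\times j$ from \eqref{shift} and simplify the added coordinate. Setting $u':=u+\lambda=\pi_V(\kappa)+\lambda$, which is a vertex of $X_{\alpha'}$ by hypothesis, and using the linearity of $\omega$ together with the identity $A(u+\lambda)=A(u)+A(\lambda)-\Delta_{u,\lambda}$ read off from \eqref{deltavl}, a one-line computation with $j=A(\lambda)-\omega(\lambda)-\Delta_{v,\lambda}$ yields
\begin{equation*}
\kappa+\widehat\lambda=u'\times\big(A(u')-\omega(u')+\ell-\eta'\big),\qquad\eta':=\eta+\Delta_{v,\lambda}-\Delta_{u,\lambda}.
\end{equation*}
In view of the vertex description of $Y_{(\alpha',v',\ell)}$, it then remains only to check two things: that $\eta'\in\{0,1\}$, and that $\eta'$ obeys the rule relating it to the $\prec_{\alpha'}^A$-position of $u'$ with respect to $v'$ (\ie $\eta'=0$ if $u'\prec_{\alpha'}^A v'$, $\eta'=1$ if $v'\prec_{\alpha'}^A u'$, either value if $u'=v'$). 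Together these say that $\kappa+\widehat\lambda$ is an actual \emph{vertex} of $Y_{(\alpha',v+\lambda,\ell)}$, not merely a point of that simplex.

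This last verification is the heart of the matter, and I expect it to be the fussiest step; I would carry it out along the case division of Lemma \ref{Aordering}, applied to the pair of vertices $v,u\in\mathrm{Vt}(X_\alpha)$ (whose translates $v',u'$ both lie in $\mathrm{Vt}(X_{\alpha'})$). If $\Delta_{v,\lambda}=\Delta_{u,\lambda}$, then $\eta'=\eta$, and \eqref{BAT1} says translation by $\lambda$ preserves the $\prec^A$-order of $v$ and $u$; hence the rule satisfied by $\eta$ with respect to $v$ transports verbatim to $\eta'$ with respect to $v'$, the case $u=v$ (where $u'=v'$) being vacuous. If $\Delta_{v,\lambda}\neq\Delta_{u,\lambda}$, then \eqref{AveqAw2} forces $A(v)\neq A(u)$, so $u\neq v$ and $\eta$ is uniquely determined; moreover the inequalities defining $\Delta$ in \eqref{deltavl} pin down the comparison of $A(u)$ and $A(v)$, giving $u\prec_\alpha^A v$ and hence $\eta=0,\ \eta'=1$ in the subcase $(\Delta_{v,\lambda},\Delta_{u,\lambda})=(1,0)$, and $v\prec_\alpha^A u$ and hence $\eta=1,\ \eta'=0$ in the subcase $(0,1)$. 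In either subcase $\eta'\in\{0,1\}$, and \eqref{BAT2}, which says translation by $\lambda$ \emph{reverses} the order of $v$ and $u$, supplies exactly the compatibility of $\eta'$ with the position of $u'$ relative to $v'$. This completes the argument; the only real subtlety is that $\eta'\in\{0,1\}$ is not automatic and genuinely uses the link between $\Delta_{v,\lambda}-\Delta_{u,\lambda}$ and the sign of $A(u)-A(v)$.
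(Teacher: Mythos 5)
Your argument is correct and is in substance the paper's own proof: the paper writes $\kappa=T(w\times t)$ with $t\in\{\ell,\ell+1\}$ and reduces everything to the identity $\Delta(v,\lambda)-\Delta(w,\lambda)+t^\prime-t=0$, which under the dictionary $\eta=\ell+1-t$ is exactly your condition that $\eta^\prime=\eta+\Delta_{v,\lambda}-\Delta_{u,\lambda}$ lies in $\{0,1\}$ and sits on the correct side of $v^\prime$, and it is settled by the same case analysis via Lemma \ref{Aordering}. The only difference is that you read the vertices directly from \eqref{Newvertices} instead of passing through the map $T$ of Lemma \ref{ProofPartProp}(a), a purely cosmetic shortcut.
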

\noindent Note that $\widehat{\lambda} $ and $\gamma^\prime:=(\alpha^\prime,v+\lambda,\ell)$  depend on $  \lambda$ and on $\gamma:=(\alpha,v ,\ell)$, but not on $\kappa \in  \mathrm{Vt}( Y_\gamma )$.
\begin{proof}  That $ {j}  \in  \Z $ is immediate from \eqref{Avchar0} and  \eqref{deltavl}.
 Now,  $\kappa\in \mathrm{Vt}( Y_\gamma )=T\big( \mathrm{Vt}(  X^{\gamma,A})\big)$, by Lemma  \ref{ProofPartProp} (a). Thus,
$$\kappa=T(w\times t),\quad w\times t\in  \mathrm{Vt} (X^{\gamma,A}),\quad  t\in\{\ell, \ell+1\}  ,\quad w=\pi_V(\kappa)\in \mathrm{Vt}(X_\alpha).
$$
By \eqref{Talphav},
\begin{equation}\label{Tgamma}
\kappa= T(w\times t)= w\times t \ +\ 0_V\times\big(A(w)-\omega(w)-1\big) .
\end{equation}
 By assumption, both  $v+\lambda$ and $w+\lambda$ are vertices of $X_{\alpha^\prime}$. Thus there exists a vertex 
\begin{equation}\label{RHO}
\rho\in  \text{Vt}(  X^{\gamma^\prime,A}),\ \rho =(w+\lambda)\times t^\prime ,\ t^\prime\in\{\ell+1,\ell\},\  \gamma^\prime:=(\alpha^\prime,v+\lambda,\ell) .
 \end{equation}
 Notice that for $w\not=v$, the value of $t^\prime$ in \eqref{RHO} is determined  by $w+\lambda$, but for $w=v$  there are  vertices $\rho =(w+\lambda)\times t^\prime\in  \text{Vt}(  X^{\gamma^\prime,A})$ for both values of $t^\prime$.
 We now calculate
 \begin{align}
&T(\rho) =\rho\  +\     0_V\times\big(A(w+\lambda)-\omega(w+\lambda)-1\big)\qquad   \ \ [\text{by}\ \eqref{Talphav}\text{ and }\eqref{RHO}] \nonumber
\\ &
= (w+\lambda)\times t^\prime  \, +\,     0_V\times\big(A(w)+A(\lambda)-\Delta(w,\lambda)-\omega(w)-\omega(\lambda)-1\big) \nonumber \\
&  \qquad\qquad\qquad\qquad\qquad\qquad\qquad\qquad\qquad\qquad\qquad\qquad\qquad  [\text{by}\, \eqref{deltavl}]  \nonumber \\ &
 = w\times t\  +\     0_V\times\big(A(w)-\omega(w)-1\big)\ +\ \lambda\times\big(A(\lambda) -\omega(\lambda)-\Delta(v,\lambda)\big) \nonumber
 \\ &
 \qquad\quad+\
0_V\times\big(\Delta(v,\lambda)-\Delta(w,\lambda) +t^\prime-t\big)\qquad  \qquad  \qquad   [t \text{ as in  } \eqref{Tgamma}]\nonumber
\\ &
= \kappa \  +\  \widehat{\lambda}  \ \,+\  \,  0_V\times\big(\Delta(v,\lambda)-\Delta(w,\lambda) +t^\prime-t\big)\quad [\text{by}\ \eqref{shift}\text{ and }\eqref{Tgamma}].\nonumber%\label{Tgammalambda}
\end{align}
Since  $ T(\rho)\in  \mathrm{Vt}(Y_{\gamma^\prime}),$   Lemma \ref{Tranlate} will be proved once we show
\begin{equation}\label{needed}
\Delta(v,\lambda)-\Delta(w,\lambda) +t^\prime-t=0.
\end{equation}
We will do this by   checking   various cases, bearing in mind that $\Delta=0$ or $1$   and $t,t^\prime=\ell$ or $\ell+1$.

If $w=v$, both $(w+\lambda)\times \ell$ and $(w+\lambda)\times (\ell+1)$ are
  vertices of $X^{\gamma^\prime,A}$. Hence we can pick the vertex $\rho$ so that $t^\prime=t$, making \eqref{needed} hold  trivially.
  We may therefore assume $w\not=v$, which makes $t$  and $t^\prime$ uniquely determined by $\prec_\alpha^A$ and $\prec_{\alpha^\prime}^A$. Namely,  $t=\ell+1$  is equivalent to $w\prec_\alpha^A v$  (see Lemma \ref{SimplexPiling}). Otherwise, $t=\ell$. Similarly,
  $t^\prime=\ell+1$ if and only if $w+\lambda\prec_{\alpha^\prime}^A v+\lambda$.

  If $\Delta(v,\lambda)=\Delta(w,\lambda)$, Lemma \ref{Aordering} shows $t=t^\prime$, proving \eqref{needed} in this case.   We may therefore assume $\Delta(v,\lambda)\not=\Delta(w,\lambda)$. Thus, again  by Lemma \ref{Aordering}, $t\not=t^\prime$, $A(v)\not=A(w)$ and $A(v+\lambda)
 \not=A(w+\lambda)$.

  We consider first the case $t=\ell$. Then $v\prec_\alpha^A w$, so $A(w)>A(v)$ by definition \eqref{SalphaA} of $\prec_\alpha^A$. Also, $t^\prime\not=t$, so $t^\prime=\ell+1$. We claim $\Delta(w,\lambda)=1$.
  Indeed,  otherwise $\Delta(w,\lambda)=0$ and so $\Delta(v,\lambda)=1$.  Therefore,  by \eqref{deltavl},  $A(v)+A(\lambda)>1$, whence $A(w)+A(\lambda)>1$. Then, again by \eqref{deltavl}, $\Delta(w,\lambda)=1$, a contradiction. Hence, $\Delta(w,\lambda)=1$  and    $\Delta(v,\lambda)=0$, from which  \eqref{needed} follows.

  Lastly, if $t=\ell+1$, then $A(w)<A(v)$ and $t^\prime=\ell$. We claim $\Delta(v,\lambda)=1$. Otherwise  $\Delta(v,\lambda)=0$ and $\Delta(w,\lambda)=1$.  By \eqref{deltavl}, $  A(v)+A(\lambda)\le 1 $. Hence
   $ A(w)+A(\lambda) < 1 $, proving  $\Delta(w,\lambda)=0,$ a contradiction. Thus, $\Delta(v,\lambda)=1, \ \Delta(w,\lambda)=0$,    and  \eqref{needed} follows.   \end{proof}
\subsection{Proof of Proposition \ref{MainInduction}}\label{proofofprop}
We need to show that $Y=\bigcup_{\gamma\in J} Y_{\gamma,\prec_\gamma}\subset V\times \R$ is an ordered $(p+1)$-complex   satisfying inequality \eqref{Tineq}, and  properties $(i)$ to $(v)$ in Definition \ref{LambdaComplex}  with respect to the lattice
$$
\widehat{\Lambda}:=\Lambda\times(M_2-M_1)\Z.
$$
Parts  (b) and (c) of Lemma \ref{ProofPartProp} show  that $Y$ is an ordered simplicial $(p+1)$-complex with compatible orders and satisfies  inequality \eqref{Tineq}. We must still prove that $Y$ has properties $(iii)$ to $(v)$.

We now prove $(iii)$, \ie if $ \kappa, \delta\in \mathrm{Vt}(Y_\gamma)$ and $ \kappa+\widehat{\lambda}, \delta+\widehat{\lambda}\in \mathrm{Vt}(Y_{\gamma^\prime})$ for some $ \widehat{\lambda}\in \widehat{\Lambda},\ \gamma=(\alpha,v,\ell),\ \gamma^\prime=(\alpha^\prime,v^\prime,\ell^\prime)\in J   ,$  then we must show  
\begin{equation}\label{Trans}
 \kappa  \prec_\gamma  \delta \qquad\Longleftrightarrow\qquad \kappa+\widehat{\lambda}  \prec_{\gamma^\prime}  \delta+\widehat{\lambda}  .
\end{equation}
By  \eqref{TVt} $\pi_V\circ T=\pi_V$, so $\pi_V$ maps $\mathrm{Vt}(Y_\gamma)=T\big(\mathrm{Vt}(X^{\gamma,A})   \big)$  to $ \mathrm{Vt}(X_\alpha)$ $\big($see (a) in  Lemma \ref{ProofPartProp}  and remark $(a)$ following the proof of Lemma
\ref{SimplexPiling}$\big)$. Of course,  $\pi_V$ maps $\widehat{\Lambda} $ to $\Lambda$. Since $X$ is  by assumption a  $\Lambda$-complex,
\begin{equation}\label{PiVorder}
\pi_V(\kappa) \prec_\alpha \pi_V(\delta)\qquad\Longleftrightarrow\qquad\pi_V(\kappa+\widehat{\lambda}) \prec_{\alpha^\prime} \pi_V(\delta+\widehat{\lambda}).
\end{equation}
If $\pi_V(\kappa) \not= \pi_V(\delta)$,   then  $\pi_V(\kappa+\widehat{\lambda}) \not= \pi_V(\delta+\widehat{\lambda})$, so \eqref{Trans}  is clear from  \eqref{gammaOrder} and  \eqref{PiVorder}.
When $\pi_V(\kappa) = \pi_V(\delta)$,     \eqref{gammaOrder} defines
$$ \kappa  \prec_\gamma  \delta  \Longleftrightarrow t^\prime<t\qquad\qquad(   \kappa= u\times t , \  \delta= u\times t^\prime).
$$
Hence \eqref{Trans} is again clear and we have proved   $(iii)$.

We now turn to the proof of $(iv)$  in Definition \ref{LambdaComplex}, \ie that $Y$ is nearly a fundamental domain for  $\widehat{\Lambda}\subset V\times\R$.
%First we will show  that $Y=T(X\times [M_1,M_2])$  surjects onto $(V\times\R)/\widehat{\Lambda}$.
 By assumption, $X$ surjects onto $V/\Lambda$.  Recall from \eqref{Tclear} that  $T(x\times s)= x\times (r_x+s)$, where $r_x\in\R$ is independent of $s\in\R$. Thus,  $Y=T(X\times [M_1,M_2])$  surjects onto $(V\times\R)/\widehat{\Lambda}$.

To complete the proof of $(iv)$, we show  that  the union of interiors $\bigcup_{\gamma\in J }\stackrel{\circ}{Y_\gamma}$ injects into $(V\times\R)/\widehat{\Lambda}$.  Suppose $\rho=T(s\times t)\in\, \stackrel{\circ}{ Y_\gamma}$ and $ \rho^\prime=T(s^\prime\times t^\prime)\in \,\stackrel{\circ}{Y_{\gamma^\prime}}$  map to the same point in $(V\times\R)/\widehat{\Lambda}$, for some $\gamma=(\alpha,v,\ell),\ \gamma^\prime=(\alpha^\prime,v^\prime,\ell^\prime)\in J   $. Since $\rho$ is in the interior of $Y_\gamma=T(X^{\gamma,A})$ and $\pi_V(Y_\gamma)=X_\alpha$,
 it follows that  $  s=\pi_V(\rho)\in\,\stackrel{\circ}{X_\alpha }$. Similarly,  $s^\prime \in\,\stackrel{\circ}{X_{\alpha^\prime} }$.
But we are assuming the congruence
$$
(s,r_s+t)=T(s\times t)\equiv T(s^\prime\times t^\prime)= s^\prime\times (r_{s^\prime}+t^\prime) \ \ (\mathrm{modulo}\  \Lambda\times(M_2-M_1)\Z),
$$
so $s\equiv  s^\prime  \ (\mathrm{modulo}\  \Lambda).$
 By assumption, $X=\bigcup_\alpha X_\alpha$  injects into $V/\Lambda$. Thus, $s=s^\prime$. Then
$$
 s\times (r_s+t)=T(s\times t)\equiv T(s\times t^\prime)= s\times (r_s+t^\prime) \ \ (\mathrm{modulo}\ \ \widehat{\Lambda}),
$$
showing $t-t^\prime\in(M_2-M_1)\Z$. It $t\not=t^\prime$, then $ t =M_1$ or $M_2$ as  $t,t^\prime\in [M_1,M_2]$. However, $t\in\Z$ means that $s\times t$ lies in a proper face of some $X^{\gamma^{\prime\prime},A}$ (see remark $(b)$ after Lemma \ref{SimplexPiling}). Hence $\rho=T(s\times t)$  lies in a proper face of  $Y_{\gamma^{\prime\prime}}$. Since $Y$ is a simplicial complex, this contradicts $\rho \in\, \stackrel{\circ}{ Y_\gamma}$, proving $(iv)$.

We now turn to the proof of $(v)$, \ie the $\widehat{\Lambda}$-equivariance of the spanning vertices.
To this end, suppose
\begin{equation}\label{StartEscher}
y,y^\prime\in Y=T(X\times [M_1,M_2]) ,\quad y^\prime=y+\widetilde{\lambda},\quad\widetilde{\lambda}=\lambda\times j\in\widehat{\Lambda}.
\end{equation}
We must show that their spanning vertices satisfy
\begin{equation}\label{EscherXI}
\Sp(y^\prime ;Y ) =\widetilde{\lambda}+\Sp(y;Y ),
\end{equation}
where for clarity we have written $\Sp (y ;Y)$ to indicate that the spanning vertices are calculated with respect to $Y$, \ie with respect to any simplex $Y_\gamma\subset Y$ containing $y$.
We claim that it suffices to prove
\begin{equation}\label{Vertices}
\kappa\in \Sp(y ;Y)   \quad\Longrightarrow\quad\kappa+\widetilde{\lambda}\in \text{Vt}(Y_{\widetilde\gamma })
\end{equation}
for some $\widetilde\gamma \in J$  independent of $\kappa$. Indeed,
  the barycentric expansion
 \begin{equation}\label{Ybary}
 y=\sum_\kappa c_\kappa \kappa\qquad\qquad\qquad\Big(\kappa\in\Sp(y;Y) ,\ \ \sum_\kappa c_\kappa=1,\ \ c_\kappa>0\Big)
  \end{equation}
implies $ y^\prime= y +\widetilde{\lambda}=\sum_\kappa c_\kappa (\kappa+\widetilde{\lambda})$, proving   \eqref{EscherXI} since $ \kappa+\widetilde{\lambda}\in \text{Vt}(Y_{\widetilde\gamma})$ by \eqref{Vertices}.

We now   prove  \eqref{Vertices}.
By \eqref{TVt} and \eqref{StartEscher},
$$
 x^\prime =x+\lambda\in X,\quad x:=\pi_V(y)\in X,\quad x^\prime:=\pi_V(y^\prime)\in X,\quad \lambda:=\pi_V(\widetilde{\lambda})\in\Lambda.
$$
Since $X=\bigcup_{\beta\in J} X_\beta$ is by assumption a $\Lambda$-complex,
\begin{equation}\label{EscherX}
\Sp (x^\prime;X )=\lambda+\Sp (x ;X).
\end{equation}
As $y\in Y= T(X\times [M_1,M_2])$, we have $y=T(\rho)\in Y_\gamma$ for some $\gamma=(\alpha,v,\ell)\in J$ and $\rho\in X^{\gamma,A}$. Therefore
$\Sp(y )\subset \text{Vt}( Y_\gamma).$
Similarly, $y^\prime\in Y_{\gamma^\prime} $ for some   $\gamma^\prime=(\alpha^\prime,v^\prime,\ell^\prime)\in J$. By Lemma \ref{Spanv} and \eqref{TVt}, we can assume
$v\in\Sp (x )\subset \mathrm{Vt}(X_\alpha).$

Let $\kappa\in \Sp(y;Y)\subset \mathrm{Vt}(Y_\gamma) $. Then $ \pi_V(\kappa)\in \Sp(x )\subset \mathrm{Vt}(X_\alpha)$. By \eqref{EscherX}, $v+\lambda\in \mathrm{Vt}(X_{\alpha^\prime})$ and
$\pi_V(\kappa)+\lambda\in \mathrm{Vt}(X_{\alpha^\prime})$. By Lemma \ref{Tranlate}, there is a $\widehat{\lambda}=\lambda\times\widehat{j}\in\Lambda\times\Z$, independent of $\kappa$, such that
$\kappa+\widehat{\lambda}\in\mathrm{Vt}(Y_{(\alpha^\prime,v+\lambda,\ell)}).$
By adding  $0_V\times q$  to $\widehat{\lambda}$  for some $q\in\Z$, we may ensure $\widehat{\lambda}\in\widehat{\Lambda}$ and
\begin{equation}\label{kappalambda}
\kappa+\widehat{\lambda}\in\mathrm{Vt}(Y_{(\alpha^\prime,v+\lambda,\ell^\prime)}),
\end{equation}
where $\ell^\prime:=\ell+q$. By again adding  $0_V\times q^\prime(M_2-M_1)$   to $\widehat{\lambda}$ for some $q^\prime\in\Z$, we may assume $\ell^\prime\in[M_1,M_2)$. Note that $\gamma^\prime=(\alpha^\prime,v+\lambda,\ell^\prime)$ is independent of $\kappa$.
Using the barycentric expansion \eqref{Ybary} of $y$, let
\begin{equation}\label{widehaty}
 \widehat{y}:=\sum_{\kappa} c_\kappa(\kappa+\widehat{\lambda})=y+\widehat{\lambda}\in Y_{\gamma^\prime}\subset Y=T(X\times [M_1,M_2]).
\end{equation}
As $y^\prime,\widehat{y}\in Y$, we can write
$$
y^\prime =T(x^\prime\times t^\prime),\qquad \widehat{y}= T(\widehat{x}\times \widehat{t}\,)\qquad(x^\prime,\widehat{x}\in X,\ \, t^\prime,\widehat{t}\in [M_1,M_2]).
$$
Applying the projection $\pi_V$ and \eqref{TVt}, we find
$$
x^\prime=\pi_V(y^\prime)=\pi_V(y+\widetilde{\lambda})=x+\lambda=\pi_V(y+\widehat{\lambda})=\pi_V( \widehat{y})= \widehat{x}.
$$
Therefore, using \eqref{Tclear},
$$
\widetilde{\lambda}-\widehat{\lambda}=y^\prime-\widehat{y}=T(x^\prime\times t^\prime)-T(x^\prime\times \widehat{t}\,)=
 x^\prime\times (r_{x^\prime}+t^\prime)- x^\prime\times (r_{x^\prime}+\widehat{t})= 0_V\times (t^\prime-\widehat{t}).
$$
Since $\widetilde{\lambda}-\widehat{\lambda}\in\widehat{\Lambda}:=\Lambda\times(M_2-M_1)\Z$ and $t^\prime,\widehat{t}\in [M_1,M_2]$, there are only three possibilities:

\begin{enumerate}[i)]
\item $t^\prime=\widehat{t}$,

\item $ t^\prime=M_1,\ \widehat{t}=M_2$,

\item $ t^\prime=M_2,\ \widehat{t}=M_1$.
\end{enumerate}

\noindent Note that $t^\prime$ and $\widehat{t}$ are independent of $ \kappa\in \Sp(y)$ since $\widetilde{\lambda}$ and  $\widehat{\lambda}$ are independent of $\kappa$. Hence the same case   i),  ii) or iii) above  occurs,  independently of $\kappa$.

In   case  i),   $\widetilde{\lambda}=\widehat{\lambda}$, so  \eqref{kappalambda} implies \eqref{Vertices} with $\widetilde\gamma:= \gamma^\prime $. In this case we are done. Consider now  case   ii), so $\widehat{y}=T(\widehat{x}\times M_2)$ and
\begin{equation}\label{bothlambda}
\widetilde{\lambda}=\widehat{\lambda}+\big(0_V\times (M_1-M_2)\big).
\end{equation}
 Order the vertices $v_j$ of $X_{\alpha^\prime} $   so that
 $
 v_0\prec_{\alpha^\prime}^A v_1\prec_{\alpha^\prime}^A\cdots\prec_{\alpha^\prime}^A v_p.
 $
Using the notation of \eqref{RoofFloor},  let
$$
\big(X^{(\alpha,v,\ell),A}\big)^\pm:=\big(X_{\alpha,\prec_\alpha^A}^v\big)^\pm+(0_V\times\ell)\subset X^{(\alpha,v,\ell),A}\qquad\qquad(\ell\in\Z).
$$
From \eqref{TopBottom} we have
$$
 (\widehat{x}\times M_2 )\in  \big(X^{(\alpha^\prime,v_p,M_2-1),A}\big)^+  \subset X^{(\alpha^\prime,v_p,M_2-1),A}.
 $$

 Hence
$$
 \Sp(\widehat{x}\times M_2;X^A)\subset \big(X^{(\alpha^\prime,v_p,M_2-1),A}\big)^+,
$$
where $X^A$ is the simplicial complex $X^A:=\bigcup_\gamma X^{\gamma,A}=X\times [M_1,M_2]$ in \eqref{Halfnewcomplex}. Thus,
\begin{align*}
\widehat{y}=T(\widehat{x}\times M_2)\in T\big((X^{(\alpha^\prime,v_p,M_2-1),A})^+\big)&\subset T(X^{(\alpha^\prime,v_p,M_2-1),A})\\
&=  Y_{(\alpha^\prime,v_p,M_2-1)}.
\end{align*}
As
 $\widehat{y}:=\sum_\kappa c_\kappa(\kappa+\widehat{\lambda})$ by \eqref{widehaty}, we see from \eqref{kappalambda}     that
\begin{equation}\label{kappawidehatlambda}
\kappa+\widehat{\lambda}\in \Sp(\widehat{y} )\subset \mathrm{Vt}\big( T((X^{(\alpha^\prime,v_p,M_2-1),A})^+)\big).
\end{equation}
By \eqref{TopBottom} again,
\begin{equation*}
(X^{(\alpha^\prime, v_p,M_2-1),A})^+=(X^{(\alpha^\prime, v_0,M_1),A})^- + \big(0_V\times(M_2-M_1)\big).
\end{equation*}
Applying $T$ and \eqref{TVt} we find
\begin{equation}\label{ALmost}
T\big((X^{(\alpha^\prime,v_p,M_2-1),A})^+\big)=T\big((X^{(\alpha^\prime,v_0,M_1),A})^- \big)+ \big(0_V\times(M_2-M_1) \big).
\end{equation}
Using \eqref{bothlambda}, \eqref{kappawidehatlambda}    and \eqref{ALmost} we obtain
\begin{equation*}
  \kappa+\widetilde{\lambda}=\kappa+\widehat{\lambda}+\big(0_V\times(M_1-M_2)\big)  \subset
 \mathrm{Vt}\big((Y_{(\alpha^\prime,v_0,M_1)})^-\big)\subset  \mathrm{Vt}\big(Y_{(\alpha^\prime,v_0,M_1)} \big),
\end{equation*}
proving  \eqref{Vertices} with $\widetilde{\gamma}:=(\alpha^\prime,v_0,M_1)$.

Case   iii)  is analogous, for then $\widehat{y}=T(\widehat{x}\times  M_1 )\in (Y_{(\alpha^\prime,v_0,M_1)})^-$ and $\widetilde{\lambda}=\widehat{\lambda}+\big(0_V\times(M_2-M_1)\big)$. Reasoning as above, we find
$\kappa+\widetilde{\lambda}\in \mathrm{Vt}\big(Y_{(\alpha^\prime,v_p,M_2-1)}\big)$, concluding the proof  of Proposition \ref{MainInduction}.

\section{Degree theory and the signed fundamental domain}\label{topology}
\noindent
In this section we prove Theorem \ref{Maintheorem}. Thus  we have   fixed independent totally positive units $\varepsilon_\ell\in k$,  principal logarithms $  \log\varepsilon_\ell^{(r_1+j)} $ of these units at the complex embeddings, and integers $N_j\ge3$ $\,(1\le\ell\le r:=r_1+r_2-1, \ \,1\le j\le r_2)$.
   Lastly, we have also fixed a twister function $\beta:\Z^{n-1}\to k^*$, as defined in \S\ref{Twistersdefined}.
\subsection{The    function $f$ on  $\mathfrak{X}\times\R$}\label{SPN}
\begin{prop}\label{BigX} The $(n-1)$-complex $\mathfrak{X}=\bigcup_{\alpha\in \mathfrak{I}}\mathfrak{X}_\alpha\subset\R^{n-1}$   defined in  \S$\mathrm{\ref{ComplexX}}$ is a $ \Lambda$-complex  (see Definition $\mathrm{\ref{LambdaComplex}}$), where $ \Lambda :=\Z^{r}\times N_1\Z\times\N_2\Z\times\cdots\times N_{r_2}\Z \subset\Z^{n-1}$. It has  the  following  additional   properties.
%\vskip.2cm

\noindent$(i)$ The vertices have integral coordinates, \ie  $\mathrm{Vt}(\mathfrak{X}_\alpha)\subset\Z^{n-1}$ for all $\alpha\in\mathfrak{I}$.
%\vskip.3cm

\noindent$(ii)$ There are $\displaystyle | \mathfrak{I}|=(n-1)!\prod_{j=1}^{r_2} N_j$  simplices $\mathfrak{X}_\alpha$.

 %\vskip.3cm

\noindent$(iii)$   For  any integer   $1\le j\le r_2$ and any index  $\alpha\in \mathfrak{I}$, there is an $\alpha_j\in\R$ such that any
 $\kappa \in\mathfrak{X}_\alpha$ satisfies $\alpha_j\le \Omega_j(\kappa)\le \alpha_j+1$, where
\begin{equation}\label{OMEGAJ}
 \Omega_j(\kappa):=\kappa[r+j]+\frac{N_j}{2\pi}\sum_{\ell=1}^r \kappa[\ell]\arg\!\big(\varepsilon_\ell^{(r_1+j)}\big) \quad(  \kappa=(\kappa[1],...,\kappa[n-1])\in  \R^{n-1} )
\end{equation}
and $-\pi< \arg\!\big(\varepsilon_\ell^{(r_1+j)}\big)\le\pi$.
\end{prop}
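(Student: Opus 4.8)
The plan is to prove Proposition~\ref{BigX} by induction on the construction of $\mathfrak{X}$ in \S\ref{ComplexX}, using Proposition~\ref{MainInduction} as the engine. First I would check the base case: the trivial $0$-complex $X_0=\{0\}$ of \eqref{X0} is vacuously a $\Lambda$-complex for the lattice $\Lambda=\{0\}\subset\R^0$, since all five properties in Definition~\ref{LambdaComplex} hold trivially for a one-point complex. For the inductive step I would note that each stage of the construction has the form $X_{m}=Y(X_{m-1},\omega_{m-1},[M_1,M_2])$ with integer endpoints, so Proposition~\ref{MainInduction} applies directly: if $X_{m-1}$ is a $\Lambda'$-complex then $X_m$ is a $\widehat{\Lambda'}=\Lambda'\times(M_2-M_1)\Z$-complex. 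Running this through the first $r$ steps (with $\omega_{j-1}=0$ and $[M_1,M_2]=[0,1]$) builds up the lattice $\Z^r\subset\R^r$, and the next $r_2$ steps (with $\omega_{r+j-1}$ the argument form and $[M_1,M_2]=[0,N_j]$) append the factors $N_j\Z$, so that $\mathfrak{X}=X_{n-1}$ is a $\Lambda$-complex for $\Lambda=\Z^r\times N_1\Z\times\cdots\times N_{r_2}\Z$ as claimed.

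Next I would dispatch the three additional properties. Property $(i)$, integrality of the vertices, is already argued in \S\ref{ComplexX} right after \eqref{neededsimplices}: we start from the single integral vertex $0$ of $X_0$, and the remark following \eqref{Newvertices} shows that each step in passing from $X_{m-1}$ to $X_m$ appends an integer coordinate to each vertex (the appended coordinate is $A(v)-\omega(v)+\ell\in\Z$ by \eqref{Avchar0}), so all vertices remain in $\Z^{n-1}$. Property $(ii)$, the count $|\mathfrak{I}|=(n-1)!\prod_j N_j$, follows from the cardinality formula \eqref{InductiveCard}: starting from $|\mathfrak{I}_0|=1$, after the first $r$ steps we get $|\mathfrak{I}|=1\cdot\prod_{j=1}^{r}j=r!$ (each step with $[0,1]$ multiplies by $1\cdot(p+1)$), and the final $r_2$ steps multiply successively by $N_j(p+1)$ where $p+1$ runs through $r+1,\dots,n-1$, giving $r!\cdot\prod_{j=1}^{r_2}N_j\cdot\frac{(n-1)!}{r!}=(n-1)!\prod_j N_j$; this also matches the formula already displayed in \eqref{neededcomplex}.

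The substantive point is property $(iii)$, the affine ``width at most $1$'' estimate for $\Omega_j$ on each top simplex $\mathfrak{X}_\alpha$. For this I would invoke the second conclusion of Proposition~\ref{MainInduction}, namely inequality \eqref{Tineq}: at the stage where $X_{r+j}=Y(X_{r+j-1},\omega_{r+j-1},[0,N_j])$ is formed, the linear form $\Omega(s\times t)=\omega_{r+j-1}(s)+t$ on $\R^{r+j-1}\times\R=\R^{r+j}$ is exactly $\Omega_j$ restricted to the first $r+j$ coordinates, and \eqref{Tineq} says that on each simplex of $X_{r+j}$ indexed by $\gamma=(\alpha,v,\ell)$ one has $A(v)+\ell-1\le\Omega(\kappa)\le A(v)+\ell$, an interval of length $1$; set $\alpha_j:=A(v)+\ell-1$. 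The remaining issue is that the later construction steps (for complex places $j'>j$) append still more coordinates to the simplices, so I must check that $\Omega_j$, viewed as a form on the final $\R^{n-1}$, does not actually depend on those appended coordinates --- which is clear from \eqref{OMEGAJ}, since $\Omega_j$ only involves $\kappa[1],\dots,\kappa[r+j]$ --- and that the inductive raising-of-dimension operation $Y(\cdot,\omega_{r+j'-1},[0,N_{j'}])$ does not change the value of $\Omega_j$ on a point: each such step replaces $\kappa$ by $T(\kappa)$, which by \eqref{Tclear} only alters the \emph{last} coordinate, a coordinate on which $\Omega_j$ does not depend. Hence the bound $\alpha_j\le\Omega_j(\kappa)\le\alpha_j+1$ survives to the final complex $\mathfrak{X}$. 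I expect this coordinate-bookkeeping --- confirming that $\Omega_j$ is insensitive to every modification performed after step $r+j$ --- to be the main (though routine) obstacle; everything else is a direct appeal to Proposition~\ref{MainInduction} and the already-recorded integrality and counting remarks.
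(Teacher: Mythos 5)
Your proposal is correct and follows essentially the same route as the paper: induction via Proposition~\ref{MainInduction} for the $\Lambda$-complex property, the remarks already recorded in \S\ref{ComplexX} and \eqref{InductiveCard} for $(i)$ and $(ii)$, and inequality \eqref{Tineq} for $(iii)$. Your ``coordinate bookkeeping'' for $(iii)$ is exactly what the paper packages as applying \eqref{Tineq} to the projection $\pi_{r+j}(\kappa)\in\pi_{r+j}(\mathfrak{X}_\alpha)\subset X_{r+j}$, using that $\Omega_j$ factors through the first $r+j$ coordinates.
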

\begin{proof}
Recall from \S\ref{ComplexX} that  $\mathfrak{X}:=X_{n-1}$ was constructed inductively starting from the trivial one-point complex $X_0$,   letting 
\begin{align*}
X_j:=Y(X_{j-1},\omega_{j-1},[0,Q_j])  
\end{align*}
where $Q_j :=1, \  \omega_{j-1}(x):=0$ if $1\le j\le r$,   and
$$
Q_{r+j} :=N_j,\qquad \omega_{r+j-1}(x):=
\frac{N_j}{2\pi} \sum_{\ell=1}^r x[\ell]\arg\!\big(\varepsilon_\ell^{(r_1+j)}\big)\quad \qquad(1\le j <r_2).
$$
Since $X_0$ is obviously a $\Lambda_0$-complex for the trivial lattice $\Lambda_0:=\{0\}$, Proposition \ref{MainInduction}
and induction on $j$ shows that $X_j\subset\R^j$ is a $Q_1\Z\times\cdots\times Q_j\Z$-complex for $1\le j\le n-1$.\footnote{\ We identify $0\times\R^j$ with $\R^j$, of course.} Taking $j=n-1$ shows that $\mathfrak{X}$ is a $ \Lambda $-complex.

Property $(i)$ in Proposition \ref{BigX} was already proved in \eqref{neededsimplices}, while $(ii)$ follows from \eqref{InductiveCard}. To prove $(iii)$, let  $\kappa\in\mathfrak{X}_\alpha$  and let
$\pi_{r+j}:\R^{n-1}\to\R^{r+j}$ be the projection to the first $r+j$ coordinates ($1\le j\le r_2$).
Then $\pi_{r+j}(\kappa)$ is contained in some $ (r+j)$-simplex $Y_\gamma:=\pi_{r+j}(\mathfrak{X}_\alpha)\subset X_{r+j}$ for some index $\gamma=(\beta,v,\ell)$   depending only on $j$ and $\alpha$. Here the complex $X_{r+j}$ was defined in \eqref{rjcomplexdefined}, $v$ is a vertex in a simplex in $X_{j+r-1}$ indexed by $\beta$, and $\ell$ is an integer in the range $0\le\ell<N_j$. Inequality \eqref{Tineq} in Proposition  \ref{MainInduction},  applied to   $\pi_{r+j}(\kappa)\in Y_\gamma,$ to  $X:=X_{r+j-1},$ $V=\R^{r+j-1},$ $\omega:=\omega_{r+j-1}$ and to $[M_1,M_2]:=[0,N_j]$, yields $(iii)$ with
$\alpha_j:=A(v)+\ell-1 $ and $A(v)$ as in \eqref{Tineq}.
\end{proof}

We now define the function $f$   in    \eqref{doblediagram}. Henceforth we keep the notation of Proposition \ref{BigX}, \ie $ \mathfrak{X},  \mathfrak{I}, \Lambda$ and $\Omega_j$. For $\alpha\in \mathfrak{I}$, let $A_\alpha:\R^{n-1} \to \R^{r_1}\times\C^{r_2}$ be the unique
 affine function which on vertices $ v=(v[1],...,v[n-1] ) \in \mathrm{Vt}(\mathfrak{X}_\alpha) \subset\Z^{n-1}$ is given by
 \begin{equation}\label{tildef}
A_\alpha(v):=\tilde{f}(v):=\beta(v)\prod_{\ell=1}^r \varepsilon_\ell^{v[\ell]}\in k^*\subset  \R^{r_1}\times\C^{r_2}.
\end{equation}
Here we used $(i)$ of Proposition \ref{BigX} and the twister function $\beta$ recalled at the beginning of this section.
Note that since  by definition $\beta$ is $\Lambda$-periodic,
\begin{equation}\label{tildefpluslambda}
\tilde{f}(v+\lambda)=\tilde{f}(v)\prod_{\ell=1}^r\varepsilon_\ell^{\lambda[\ell]}\qquad\qquad(v\in\Z^{n-1},\ \lambda\in\Lambda).
\end{equation}
 Let $A:\mathfrak{X}\to \R^{r_1}\times\C^{r_2}$ be the   piecewise affine function which equals $A_\alpha$ on $\mathfrak{X}_\alpha$. Since $ \mathfrak{X}=\bigcup_{\alpha\in \mathfrak{I}} \mathfrak{X}_\alpha $ is a simplicial complex by Proposition \ref{BigX}, and  $\tilde f(v)$ in \eqref{tildef} does not  depend on $\alpha\in  \mathfrak{I}$, the $A_\alpha$ define a unique (and continuous)  piecewise affine function $A:\mathfrak{X}\to\R$ which restricts to $A_\alpha$ on $ \mathfrak{X}_\alpha $. Let
   \begin{equation}\label{f}
 f(x\times y):= \e^y A(x), \qquad\quad\qquad f:\mathfrak{X}\times\R\to\R^{r_1}\times\C^{r_2} ,
\end{equation}
where $ \e^y$ is regarded as a real scalar multiplying each one of the $r_1+r_2$ coordinates of $A(x)$.
Writing   $ x=\sum_{i=0}^{n-1} c_i v_i$, where $ \sum_{i=0}^{n-1} c_i=1, \  c_i\ge0 \  \,(0\le i\le n-1)$, \eqref{affinemap} gives
 \begin{equation}\label{affinef}
f(x\times y)=\e^y\sum_{i=1}^n c_i w_i\ \big(x\in\mathfrak{X}_\alpha=[ v_0,...,v_{n-1}],\  y\in\R,\ w_i:=\tilde f(v_i) \big).
\end{equation}

We now prove some basic properties of $f$.
\begin{lemm}\label{fprop}  The function $f$ defined in \eqref{f} above has the following properties.
\begin{equation}\label{frange}
(i)\qquad\qquad f( \mathfrak{X}_\alpha\times\R)\subset  \mathcal{H}_\alpha:=\R_+^{r_1}\times H\Big(\frac{\alpha_1+\frac12}{N_1}\Big)
 \times\cdots\times H\Big(\frac{\alpha_{r_2}+\frac12}{N_{r_2}}\Big),
 \end{equation}
 where $ \alpha_j\in\R$ was defined in Proposition $\mathrm{\ref{BigX}}\  (iii)$   and where $H(t)\subset\C^*$ is the open half-plane
\begin{equation*}%\label{Ht}
H(t)  :=\big\{r \e^{2\pi i (t+ \theta)} \in\C^*\big|\,r>0,\  \theta \in  (-1/4 ,1/4 )\big\}.
\end{equation*}
Thus,
\begin{equation} \label{frange1}
f( \mathfrak{X} \times\R)\subset  \R_+^{r_1}\times {\C^*}^{r_2} .
\end{equation}
 
\noindent$(ii)$  For $\alpha\in J$,   let $v_0,...,v_{n-1}$ be the vertices of $\mathfrak{X}_\alpha$ (in any order) and  let $w_i:= \tilde f(v_i)= f(v_i\times0) \in k\subset\R^{r_1}\times\C^{r_2}$, and let $\overline{C}_\alpha$ be as defined in \eqref{closedcone}. Then
\begin{align}\label{fcone}
&\overline{C}_\alpha=f(\mathfrak{X}_\alpha\times\R)\\
&\nonumber=\Big\{t_0w_0+t_1w_1+\cdots+t_{n-1}w_{n-1}\big|\, t_i\ge0  \text{ for }0\le i\le n  -1 \Big\}\ - \big\{0\big\},
\end{align}
 the closed $k$-rational polyhedral cone generated by the $w_i$, minus the origin.

\noindent$(iii)$   If $y\in\R$ and $x^\prime=x+ \lambda$ with $x,x^\prime\in \mathfrak{X}$ and $ \lambda\in  \Lambda$  $\big($see Proposition $\ref{BigX}\big)$, then  \begin{equation}\label{fescher}
f(x^\prime\times y)=\varepsilon( \lambda )  f(x\times y), \  \varepsilon( \lambda ) :=\prod_{\ell=1}^r\varepsilon_\ell^{\lambda[\ell]}\ \big(\lambda=(\lambda[1],...,\lambda[n-1])  \big).
\end{equation}
\end{lemm}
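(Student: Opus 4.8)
The plan is to prove $(i)$ first; parts $(ii)$ and $(iii)$ will then follow quickly from it together with the barycentric formula \eqref{affinef}. The key structural observation is that $\mathcal{H}_\alpha$ is a \emph{convex cone}: each $H(t)$ equals the open half-plane $\{z\in\C\,:\,\re(z\e^{-2\pi i t})>0\}$, which is convex and closed under addition and multiplication by positive reals, as is $\R_+$. Hence, by \eqref{affinef}, to show $f(\mathfrak{X}_\alpha\times\R)\subset\mathcal{H}_\alpha$ it suffices to check that each generator $w_i=\tilde f(v_i)$, $v_i\in\mathrm{Vt}(\mathfrak{X}_\alpha)$, lies in $\mathcal{H}_\alpha$; for then every $f(x\times y)=\e^y\sum_i c_iw_i$ is a nonnegative combination, with positive total weight $\e^y$, of points of the convex cone $\mathcal{H}_\alpha$. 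For the real embeddings this is immediate: $w_i^{(m)}=\beta(v_i)^{(m)}\prod_{\ell=1}^r(\varepsilon_\ell^{(m)})^{v_i[\ell]}>0$ for $1\le m\le r_1$, since $\beta(v_i)$ is totally positive and the $\varepsilon_\ell$ are totally positive units.

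The substantive point is the $j$-th complex embedding ($1\le j\le r_2$). From \eqref{tildef} we have $w_i^{(r_1+j)}=\beta(v_i)^{(r_1+j)}\prod_{\ell=1}^r(\varepsilon_\ell^{(r_1+j)})^{v_i[\ell]}$; taking arguments and substituting the twister normalization \eqref{beta0} for $\arg\beta(v_i)^{(r_1+j)}$ together with the definition \eqref{OMEGAJ} of $\Omega_j$, one obtains the clean identity
\[
\arg w_i^{(r_1+j)}\ \equiv\ \frac{2\pi}{N_j}\big(\Omega_j(v_i)+t_j(v_i)\big)\pmod{2\pi}.
\]
Proposition \ref{BigX}$(iii)$ gives $\alpha_j\le\Omega_j(v_i)\le\alpha_j+1$ with the same $\alpha_j$ for every vertex $v_i$ of $\mathfrak{X}_\alpha$, while the twister bound in \eqref{beta0} gives $|t_j(v_i)|<N_j/4-1/2$. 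Adding these, $\Omega_j(v_i)+t_j(v_i)$ lies in the open interval $\big(\alpha_j+\tfrac12-\tfrac{N_j}4,\ \alpha_j+\tfrac12+\tfrac{N_j}4\big)$, whence
\[
\arg w_i^{(r_1+j)}\ \in\ \Big(2\pi\,\frac{\alpha_j+\tfrac12}{N_j}-\frac{\pi}{2}\,,\ \ 2\pi\,\frac{\alpha_j+\tfrac12}{N_j}+\frac{\pi}{2}\Big)\pmod{2\pi},
\]
which is exactly the angular range of $H\big((\alpha_j+\tfrac12)/N_j\big)$. Hence $w_i^{(r_1+j)}\in H\big((\alpha_j+\tfrac12)/N_j\big)$, so $w_i\in\mathcal{H}_\alpha$; this proves \eqref{frange}, and \eqref{frange1} follows by taking the union over $\alpha\in\mathfrak{I}$, since $H(t)\subset\C^*$.

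Granting $(i)$, part $(ii)$ is a formal consequence of \eqref{affinef}: if $(t_0,\dots,t_{n-1})\ne 0$ with all $t_i\ge0$, put $s:=\sum_i t_i>0$; then $\sum_i t_iw_i=f\big((\sum_i(t_i/s)v_i)\times\log s\big)\in f(\mathfrak{X}_\alpha\times\R)$, while conversely $f(x\times y)=\sum_i(\e^yc_i)w_i$ is never $0$ by $(i)$. Thus $f(\mathfrak{X}_\alpha\times\R)=\{\sum_i t_iw_i:t_i\ge0\}\setminus\{0\}=\overline{C}_\alpha$ by \eqref{closedcone}, and it is a $k$-rational polyhedral cone because each $w_i=\tilde f(v_i)\in k^*$ by \eqref{tildef}. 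For $(iii)$, write $x=\sum_i c_iv_i$ for the barycentric expansion of $x$ over its spanning vertices; since $\mathfrak{X}$ is a $\Lambda$-complex (Proposition \ref{BigX}), the spanning vertices are $\Lambda$-equivariant, so $x'=x+\lambda=\sum_i c_i(v_i+\lambda)$ is the barycentric expansion of $x'$ over \emph{its} spanning vertices. Hence, using \eqref{tildefpluslambda},
\[
A(x')=\sum_i c_i\,\tilde f(v_i+\lambda)=\varepsilon(\lambda)\sum_i c_i\,\tilde f(v_i)=\varepsilon(\lambda)\,A(x),
\]
and therefore $f(x'\times y)=\e^yA(x')=\varepsilon(\lambda)\,f(x\times y)$, as claimed.

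I expect the main obstacle to be the argument computation for the complex coordinates in $(i)$: one must carefully propagate $\arg w_i^{(r_1+j)}$ through the twister normalization \eqref{beta0} and the definitions of $\omega_{r+j-1}$ and $\Omega_j$, and then recognize that the interval of width exactly $\pi$ produced by combining the unit-length bound $\alpha_j\le\Omega_j(v_i)\le\alpha_j+1$ (from Proposition \ref{BigX}$(iii)$) with the twister slack $|t_j(v_i)|<N_j/4-1/2$ coincides precisely with the angular aperture of the half-plane $H\big((\alpha_j+\tfrac12)/N_j\big)$. This exact fit — which forces all $n$ generators of $\overline{C}_\alpha$ into a single convex half-plane sector at each complex place — is the whole reason the twister bound in \eqref{beta0} is taken to be $\tfrac14-\tfrac1{2N_j}$.
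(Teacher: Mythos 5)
Your proof is correct and follows essentially the same route as the paper's: convexity of the cone $\mathcal{H}_\alpha$ reduces $(i)$ to the vertices, where the twister bound $|t_j|/N_j<\tfrac14-\tfrac1{2N_j}$ combines with the unit-width bound on $\Omega_j$ from Proposition \ref{BigX}$(iii)$ to place each $w_i^{(r_1+j)}$ in $H\big((\alpha_j+\tfrac12)/N_j\big)$, and $(ii)$, $(iii)$ then follow from \eqref{affinef}, \eqref{tildefpluslambda} and the $\Lambda$-equivariance of spanning vertices exactly as in the paper. No gaps.
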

\noindent Note that   $\varepsilon( \lambda ) $ is a unit  belonging to  the subgroup generated by   $\varepsilon_1,...,\varepsilon_r $. The  product $\varepsilon( \lambda )  f(x\times y)$  is taken in the ring $\R^{r_1}\times{\C}^{r_2}.$ Recall that we regard  $k\subset\R^{r_1}\times{\C}^{r_2}$.
\begin{proof}
 We begin with the proof of
    \eqref{frange}, \ie   $f(\mathfrak{X}_\alpha\times\R)\subset\mathcal{H}_\alpha$. The set  $\mathcal{H}_\alpha$ is a  convex cone,  so  from \eqref{affinef} it is clear that  it suffices to prove   $  f(\kappa\times0)\in \mathcal{H}_\alpha$ for $\kappa\in\text{Vt}(\mathfrak{X}_\alpha)$. From \eqref{tildefpluslambda} and \eqref{affinef},
\begin{equation}\label{fkappa}
 f(\kappa\times0)^{(j)}=\beta(\kappa)^{(j)}\prod_{\ell=1}^{r}\big( \varepsilon_\ell^{(j)}\big)^{\kappa[\ell]}\qquad\qquad(1\le j\le r_1+r_2).
\end{equation}
 For $1\le j\le r_1$, $\beta(\kappa)^{(j)}>0$ and  $\varepsilon_\ell^{(j)}>0$, so
$ f(\kappa\times0)^{(j)}>0$.
For $1\le j\le  r_2$, recall from   \eqref{beta0} in the definition of twisters,
\begin{align}
\label{tj} &\beta(\kappa)^{(r_1+j)}\\
&\nonumber=|\beta(\kappa)^{(r_1+j)}| \exp\!\big(2\pi i(\kappa[r+j]+t_j(\kappa))/N_j\big)\quad\Big(\frac{|t_j(\kappa)|}{N_j}< \frac14-\frac{1}{2N_j }\Big).
\end{align}
Using Proposition \ref{BigX} $(iii)$ and \eqref{OMEGAJ},
we can write  
\begin{equation}\label{thetaj}
 \Omega_j(\kappa):=\kappa[r+j]+\frac{N_j}{2\pi}\sum_{\ell=1}^r \kappa[\ell]\arg\!\big(\varepsilon_\ell^{(r_1+j)}\big)=\alpha_j+\frac12+\theta_j(\kappa)
\end{equation}
for some $\theta_j(\kappa)\in[-1/2,1/2]$. Then,  letting $ r_j(\kappa):=\big|\tilde f(\kappa )^{(r_1+j)}\big|>0$, from \eqref{fkappa} we have
\begin{align*}
&\tilde f(\kappa )^{(r_1+j)}\\
&=r_j(\kappa) \exp\!\big(2\pi i(\kappa[r+j]+t_j(\kappa))/N_j\big)\exp\!\Big(i\sum_{\ell=1}^r  \kappa[\ell]\arg( \varepsilon_\ell^{(r_1+ j)})\Big)
\\
&=r_j(\kappa) \exp\!\bigg(\frac{2\pi i}{N_j}\Big(\kappa[r+j] +\frac{N_j}{2\pi}\sum_{\ell=1}^r  \kappa[\ell]\arg( \varepsilon_\ell^{(r_1+ j)}) \Big) \bigg)\exp(2\pi i t_j(\kappa)/N_j)
\\
&=r_j(\kappa) \exp\!\big( 2\pi i\Omega_j(\kappa)/N_j\big)\exp(2\pi i t_j(\kappa)/N_j)
\\
&=r_j(\kappa) \exp\!\big( 2\pi i( \alpha_j+\textstyle{\frac12)}/N_j\big)\exp\!\big(2\pi i (t_j(\kappa)+\theta_j(\kappa))/N_j
 \big).
\end{align*}
By \eqref{tj} and \eqref{thetaj},
$|t_j(\kappa)+\theta_j(\kappa)|/N_j< \frac14$,
showing that $\tilde f(\kappa )^{(r_1+j)}\in H\big( (\alpha_j+\frac12)/N_j\big)$ and proving \eqref{frange}.

To prove \eqref{fescher}, write the barycentric expansion of $x=\sum_{i}c_i v_i\in\mathfrak{X}_\alpha$ as in \eqref{affinef}, but only include
vertices $v_i$ with $c_i>0$, \ie $v_i\in\Sp(x )$. Since $\mathfrak{X}$ is a $\Lambda$-complex by Proposition \ref{BigX}, every $v_i+\lambda$ is a spanning vertex of $x^\prime$, and therefore a vertex of some $\mathfrak{X}_{\alpha^\prime}$ where $\alpha^\prime$ is  independent of $v_i$
 $\big($see property $(v)$ in Definition \ref{LambdaComplex}$\big)$. Hence we obtain  the barycentric expansion of $x^\prime$ as
$$
x^\prime=x+\lambda= \lambda+\sum_{i}c_i v_i=\sum_{i}c_i (v_i+\lambda).
$$
By   \eqref{affinef} and \eqref{tildefpluslambda},
$$
f(x^\prime\times y)=\e^y\sum_{i}c_i \tilde{f}(v_i+\lambda)=\e^y\sum_{i}c_i \tilde{f}(v_i)\varepsilon(\lambda)
=\varepsilon(\lambda)f(x\times y),
$$
proving  \eqref{fescher}.

We now prove  \eqref{fcone}.   Since $f(x\times0)=A_\alpha(x)$ for  $x\in\mathfrak{X}_\alpha $, and  $A_\alpha$ is an affine function, it follows that  $f(\mathfrak{X}_\alpha\times 0)=[w_0,...,w_{n-1}]$, the   convex hull of the $w_i:=A_\alpha(v_i)$.  From \eqref{f}, $f(x\times y)=\e^y f(x\times0)$. It follows that $f(\mathfrak{X}_\alpha\times \R)\subset \mathcal{H}_\alpha$ is the closed cone generated by the $w_i$, minus the origin. It does not contain the origin since $0\notin\mathcal{H}_\alpha$. \end{proof}

\subsection{Global and local degree computations}
 As in Theorem \ref{Maintheorem}, let  $\varepsilon_1,\ldots, \varepsilon_r $ be fixed independent  totally positive units of $k$, where $r=r_1+r_2-1$. We use them to define   a   homomorphism
  $g   $  from the additive group $\R^n$ to the
multiplicative group $\R_+^{r_1}\times {\C^*}^{r_2} $. Namely, let the  $j$-th coordinate of $g\ \,(1\le j\le r_1+r_2)$ be
  \begin{align}\label{Basicgj}
g(x)^{(j)}:=
 \begin{cases}
 \exp\!\big(x[n]+\sum_{\ell=1}^r x[\ell]\log \varepsilon_\ell^{(j)} \big)   &\text{if }  j\le r_1,\\
\exp\!\big(x[n] + \frac{2\pi i x[j+r_2-1]}{N_{j-r_1}}+\sum_{\ell=1}^r x[\ell]\log \varepsilon_\ell^{(j)}\big)  &\text{if }r_1< j .
\end{cases}
  \end{align}
Here $N_j$ is as in  Theorem \ref{Maintheorem} and $\log z$ denotes the  branch with  $-\pi <\im(\log z)\le \pi $.

 \begin{lemm}\label{Crooked} The  homomorphism $g$ in \eqref{Basicgj} is onto, infinitely differentiable and
\begin{equation}\label{kerg}
g^{-1}\big(\langle\varepsilon_1,...,\varepsilon_r\rangle\big)=\widehat{\Lambda}:=  \Lambda\times0= \Z^r\times N_1\Z\times\cdots \times N_{r_2}\Z\times0\subset\R^n,
\end{equation}
where $\langle\varepsilon_1,...,\varepsilon_r\rangle$ is the subgroup of the units of $k$ generated by the $\varepsilon_\ell$.
\end{lemm}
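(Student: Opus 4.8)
The plan is to treat the three assertions in turn, all of them resting on the single nontrivial input that the matrix $R$ of \eqref{Reg1} is invertible. Infinite differentiability is immediate: each coordinate $g(x)^{(j)}$ in \eqref{Basicgj} is the composition of the complex exponential with a fixed $\R$-linear map $\R^n\to\C$, hence $C^\infty$. That $g$ is a group homomorphism from $(\R^n,+)$ to the multiplicative group $\R_+^{r_1}\times{\C^*}^{r_2}$ follows at once from additivity of the exponents and the functional equation of $\exp$; note also that for $1\le j\le r_1$ the exponent is real, the $\varepsilon_\ell$ being totally positive, so that $g(x)^{(j)}>0$.

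For surjectivity the key point is that the logarithm of the absolute value of $g$ involves only the variables $x[1],\dots,x[r],x[n]$ and is governed by $R$. Indeed, since the term $2\pi i\,x[j+r_2-1]/N_{j-r_1}$ is purely imaginary and $x[\ell],x[n]\in\R$, one has $\log|g(x)^{(j)}|=x[n]+\sum_{\ell=1}^r x[\ell]\log|\varepsilon_\ell^{(j)}|$ for every $j$, and reading off coefficients this exhibits the map $(x[n],x[1],\dots,x[r])\mapsto\big(\log|g(x)^{(j)}|\big)_{1\le j\le r_1+r_2}$ as multiplication by the transpose of $R$, which is invertible because $|\det R|=2^{-r_2}n\,\mathrm{Reg}(\varepsilon_1,\dots,\varepsilon_r)\ne0$. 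Given a target $y=(y_1,\dots,y_{r_1+r_2})\in\R_+^{r_1}\times{\C^*}^{r_2}$, I would first solve this square linear system to choose $x[1],\dots,x[r],x[n]$ with $\log|g(x)^{(j)}|=\log|y_j|$ for all $j$; then automatically $g(x)^{(j)}=|y_j|=y_j$ for $j\le r_1$, while for $j=r_1+m$ the argument of $g(x)^{(j)}$ equals $2\pi\,x[r+m]/N_m$ plus a real number already fixed by the previous choices, so $x[r+m]\in\R$ can be selected to make this argument agree with $\arg(y_j)$ modulo $2\pi$. This yields $g(x)=y$.

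For \eqref{kerg} I would first compute $\ker g=g^{-1}(1)$ directly and then bootstrap to the full preimage of $\langle\varepsilon_1,\dots,\varepsilon_r\rangle$. Imposing $g(x)=1$ and taking $\log|\cdot|$ forces, by invertibility of $R$, $x[1]=\dots=x[r]=x[n]=0$; feeding this back into the complex coordinates gives $\exp(2\pi i\,x[r+m]/N_m)=1$, i.e.\ $x[r+m]\in N_m\Z$ for $1\le m\le r_2$. Hence $\ker g=0^r\times N_1\Z\times\dots\times N_{r_2}\Z\times0\subseteq\widehat\Lambda$. Next, a direct computation—using that an integer power of $\exp\log z$ is independent of the branch—shows that for $\lambda=(\lambda[1],\dots,\lambda[n-1],0)\in\widehat\Lambda$ one has $g(\lambda)^{(j)}=\prod_{\ell=1}^r(\varepsilon_\ell^{(j)})^{\lambda[\ell]}=\tau_j\big(\prod_{\ell=1}^r\varepsilon_\ell^{\lambda[\ell]}\big)$ for all $j$, so $g(\lambda)=\prod_\ell\varepsilon_\ell^{\lambda[\ell]}\in\langle\varepsilon_1,\dots,\varepsilon_r\rangle$; thus $\widehat\Lambda\subseteq g^{-1}(\langle\varepsilon_1,\dots,\varepsilon_r\rangle)$. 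Finally, if $g(x)\in\langle\varepsilon_1,\dots,\varepsilon_r\rangle$, write $g(x)=\prod_{\ell=1}^r\varepsilon_\ell^{m_\ell}$ with uniquely determined integers $m_\ell$ (the $\varepsilon_\ell$ being independent), set $\mu:=(m_1,\dots,m_r,0,\dots,0)\in\widehat\Lambda$; then $g(x-\mu)=g(x)g(\mu)^{-1}=1$, so $x-\mu\in\ker g\subseteq\widehat\Lambda$ and therefore $x\in\widehat\Lambda$. Combining the two inclusions gives \eqref{kerg}.

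I do not anticipate a genuine obstacle here; the only thing needing care is the indexing bookkeeping—in particular that for $j=r_1+m$ the extra variable $x[j+r_2-1]$ equals $x[r+m]$, so these run over exactly $x[r+1],\dots,x[n-1]$, while $x[n]$ is the common scaling variable occurring at every place—together with the harmless observation that the branch of $\log$ is irrelevant throughout the $\widehat\Lambda$-computation, where it is only ever raised to an integer power.
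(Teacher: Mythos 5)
Your proof is correct and follows essentially the same route as the paper: solve the modulus equations first via an invertible $(r+1)\times(r+1)$ linear system (the paper phrases this with the weighted map $\Log$ and the vector $F$ off the trace-zero hyperplane, you with $R^{T}$ and the regulator, which amount to the same thing), then adjust the remaining coordinates to fix the arguments, compute $\ker g$, and translate by the obvious preimages of the $\varepsilon_\ell$. You merely spell out the final passage from $\ker g$ to $g^{-1}(\langle\varepsilon_1,\dots,\varepsilon_r\rangle)$, which the paper leaves implicit.
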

\begin{proof}
From   \eqref{Basicgj} it is clear that  $g$ is infinitely differentiable. Let us  show that $g$
 is onto. Let $\{E_i\}_{i=1}^n$ be the standard basis for $\R^n$. Then $g(E_i)=\varpsilon_i$ for
   $1\le i\le r$,  and $g(E_n)=(\e,...,\e)$.
 Define   $ \Log:\R_+^{r_1}\times {\C^*}^{r_2}\to\R^{r_1+r_2}  $ at $z=\big( z^{(1)},...,z^{(r_1+r_2)} \big)$  by
\begin{equation}\label{Log}
\big(\Log(z)\big)_j:=e_j\log|z^{(j)} |\qquad (e_j:=1 \ \text{if}\ 1\le j\le r_1,\ e_j:=2 \ \text{if}\ r_1 <j\le r_1+r_2).
\end{equation}
Then $\Log\big(g(x)\big)=x[n] F+\sum_{\ell=1}^r x[\ell] \Log(\varepsilon_\ell)$, where  $F_j:=e_j$.
As the $\varepsilon_i$ are assumed independent and $F$ does not lie in the span of the $L(\varepsilon_\ell)$, given $z \in \R_+^{r_1}\times {\C^*}^{r_2}$, there exists $x\in\R^n$ such that $\Log\big(g(x)\big)=\Log(z)$. But then all coordinates of  $zg(-x)\in \R_+^{r_1}\times {\C^*}^{r_2}$ have  absolute value 1, so $z^{(j)}=  g(x)^{(j)}\exp(2\pi i\theta_j)$  with  $\theta_j\in\R$ ($r_1<j\le r_1+r_2$). On letting $x^\prime[\ell]:=x[\ell]+N_{\ell-r}\theta_{\ell-r_2+1}$ for $r< \ell \le n-1$ and   $x^\prime[\ell]:=x[\ell]$ for all  other  $\ell $, 
we have $g(x^\prime)=z$.

A similar use of $\Log$ shows that $x\in\ker(g)$ if and only if $x[j]=0$ for $1\le j\le r$ and $j=n$, and $x[r+j]\in N_j\Z$ for $1\le j\le r_2$. Hence \eqref{kerg} follows.
\end{proof}
%%%%%%%%%%%

\begin{lemm}\label{gprop}  With $g$ and  $  \mathcal{H}_\alpha$ as in \eqref{Basicgj} and \eqref{frange},
 $g( \mathfrak{X}_\alpha\times\R)\subset  \mathcal{H}_\alpha .$
\end{lemm}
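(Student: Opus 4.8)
The plan is to follow the proof of Lemma~\ref{fprop}(i), observing that matters are in fact simpler here, since the homomorphism $g$ of \eqref{Basicgj} already carries the needed rotation at each complex place and no twister correction is involved. The set $\mathcal{H}_\alpha$ is a cone that factors as a product over the $r_1+r_2$ archimedean coordinates, and from \eqref{Basicgj} one sees $g(\kappa\times y)=\e^{y}\,g(\kappa\times 0)$ with $\e^{y}>0$ a scalar, so it suffices to check that, for $\kappa\in\mathfrak{X}_\alpha$ and $y\in\R$, each coordinate of $g(\kappa\times y)$ lies in the corresponding factor of $\mathcal{H}_\alpha$. At a real place $1\le j\le r_1$, formula \eqref{Basicgj} exhibits $g(\kappa\times y)^{(j)}$ as the exponential of a real number, hence a point of $\R_+$, so only the complex places need attention.

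At a complex place $j=r_1+m$ with $1\le m\le r_2$, I would first untangle the indices in \eqref{Basicgj}, where $j+r_2-1=r+m$ and $N_{j-r_1}=N_m$, so that
\begin{equation*}
g(\kappa\times y)^{(r_1+m)}=\exp\!\Big(y+\frac{2\pi i\,\kappa[r+m]}{N_m}+\sum_{\ell=1}^{r}\kappa[\ell]\log\varepsilon_\ell^{(r_1+m)}\Big).
\end{equation*}
Splitting each principal logarithm into real and imaginary parts, this number has positive modulus, while its argument, modulo $2\pi$, equals
\begin{equation*}
\frac{2\pi}{N_m}\,\kappa[r+m]+\sum_{\ell=1}^{r}\kappa[\ell]\arg\!\big(\varepsilon_\ell^{(r_1+m)}\big)=\frac{2\pi}{N_m}\,\Omega_m(\kappa),
\end{equation*}
with $\Omega_m$ as in \eqref{OMEGAJ}. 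Proposition~\ref{BigX}(iii) gives $\alpha_m\le\Omega_m(\kappa)\le\alpha_m+1$, so, setting $\theta:=\big(\Omega_m(\kappa)-\alpha_m-\tfrac12\big)/N_m$, one has
\begin{equation*}
g(\kappa\times y)^{(r_1+m)}=r\,\exp\!\Big(2\pi i\Big(\tfrac{\alpha_m+1/2}{N_m}+\theta\Big)\Big),\qquad r>0,\qquad |\theta|\le\frac{1}{2N_m}\le\frac16<\frac14,
\end{equation*}
the last inequalities because $N_m\ge3$. Hence this coordinate lies in $H\big((\alpha_m+\tfrac12)/N_m\big)$, and assembling the coordinates gives $g(\kappa\times y)\in\mathcal{H}_\alpha$.

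I do not expect a genuine obstacle. The only points requiring care are the index shifts $j+r_2-1=r+m$ and $N_{j-r_1}=N_m$ in \eqref{Basicgj}, together with the observation that the argument of the $m$-th complex coordinate of $g(\kappa\times y)$ is exactly $\tfrac{2\pi}{N_m}\Omega_m(\kappa)$, which lets Proposition~\ref{BigX}(iii) apply verbatim. The hypothesis $N_m\ge3$ is used precisely to make the half-plane $H(\cdot)$, of angular width $\pi$, wide enough to contain the image of a full unit interval of $\Omega_m$-values, of angular width $2\pi/N_m\le 2\pi/3$, and this image is correctly centered because both $H\big((\alpha_m+\tfrac12)/N_m\big)$ and the allowed range of $\Omega_m$ are symmetric about $(\alpha_m+\tfrac12)/N_m$ and $\alpha_m+\tfrac12$, respectively.
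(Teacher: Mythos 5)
Your proposal is correct and follows essentially the same route as the paper: reduce to real coordinates being positive exponentials, identify the argument of the $m$-th complex coordinate of $g(\kappa\times y)$ as exactly $\tfrac{2\pi}{N_m}\Omega_m(\kappa)$, and invoke Proposition~\ref{BigX}(iii) together with $N_m\ge3$ to land in the half-plane $H\big((\alpha_m+\tfrac12)/N_m\big)$. The index bookkeeping and the width/centering remark are accurate; the paper's proof is just a terser version of the same argument.
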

\begin{proof} For $1\le j\le r_1$ it is clear that $g(x\times y)^{(j)}>0$ for $x\in\R^{n-1}$ and $y\in\R$. For $1\le j\le r_2$, observe that $g(x\times y)^{(r_1+j)}=\e^yg(x\times0)^{(r_1+j)}$. Hence it suffices to show that $g(x\times0) \in\mathcal{H}_\alpha  $ for $x\in \mathfrak{X}_\alpha$. Now  $g(x\times0)^{(r_1+j)}=r_{x,j}\exp\!\big(2\pi i\Omega_j(x)/N_j\big)$, where $r_{x,j}>0$ and
$\Omega_j(x)$ was defined in \eqref{OMEGAJ}.  Property $(iii)$ in Proposition \ref{BigX} concludes  the  proof since $N_j\ge 3$ by assumption.
\end{proof}
Let $\widehat{\Lambda}\subset\R^n$ be as in \eqref{kerg}, and define the $n$-manifolds
\begin{equation}\label{manifolds}
\widehat{T}:=\R^n/\widehat{\Lambda},\qquad \ T:=(\R_+^{r_1}\times{\C^*}^{r_2})/\langle\varepsilon_1,...,\varepsilon_r\rangle.
\end{equation}
It follows from Lemma \ref{Crooked} that $g$ induces a  bijection $G:\widehat{T}\to T $,
\begin{equation}\label{diagrams} 
\begin{CD}
\R^n @> g > > \R_+^{r_1}\times{\C^*}^{r_2}\\
@VV\widehat{\pi}V @VV\pi V\\
\widehat{T} @>G > > T
\end{CD} \qquad\qquad\qquad 
\end{equation}
making the  diagram  commute. Here
  $\widehat{\pi}$ and $\pi$  are the natural quotient maps.
\begin{lemm}\label{Proper} The  map $G:\widehat{T}\to T$ defined above is a homeomorphism.
\end{lemm}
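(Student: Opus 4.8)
The plan is to exhibit $G$ as a continuous open bijection, since such a map is automatically a homeomorphism. Bijectivity is already in hand: by Lemma \ref{Crooked} the homomorphism $g$ is onto and $g^{-1}\big(\langle\varepsilon_1,\dots,\varepsilon_r\rangle\big)=\widehat\Lambda$, so the relation $G\circ\widehat\pi=\pi\circ g$ forces $G$ to be a well-defined bijection between the quotients in \eqref{manifolds}. Continuity of $G$ is formal: $\pi\circ g$ is continuous and $\widehat\pi$ is a quotient map (indeed an open surjection, being the projection of a translation action), so $G$ is continuous. Everything therefore reduces to showing that $g$ is an open map.

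For this I would check that $g$ is a local diffeomorphism. As $g$ is a homomorphism of the additive group $\R^n$, it suffices that its differential at the origin be invertible. Choosing branches of the logarithm near the $r_2$ complex places and composing $g$ with the chart $z\mapsto\big(\log|z^{(j)}|\big)_{1\le j\le r_1+r_2}\times\big(\arg z^{(r_1+m)}\big)_{1\le m\le r_2}$ gives, by \eqref{Basicgj}, an affine map of $\R^n$ whose linear part $M$ has $\log$-rows $x[n]+\sum_{\ell}x[\ell]\log|\varepsilon_\ell^{(j)}|$ and $\arg$-rows $\tfrac{2\pi}{N_m}x[r+m]+\sum_{\ell}x[\ell]\arg\!\big(\varepsilon_\ell^{(r_1+m)}\big)$ (the purely imaginary term $2\pi i x[r+m]/N_m$ does not affect $\log|\cdot|$). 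Grouping the columns $x[r+1],\dots,x[r+r_2]$ exhibits $M$ in block form with the invertible diagonal block $\mathrm{diag}(2\pi/N_1,\dots,2\pi/N_{r_2})$, and a Schur-complement computation gives $\det M=\pm\big(\prod_{m}2\pi/N_m\big)\det R$, with $R$ the matrix of \eqref{Reg1}. Since $|\det R|=2^{-r_2}n\,\mathrm{Reg}(\varepsilon_1,\dots,\varepsilon_r)\neq0$ (the $\varepsilon_\ell$ being independent), $\det M\neq0$, so $g$ is everywhere a local diffeomorphism and in particular an open map.

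Finally I would deduce that $G$ is open: for $U\subset\widehat T$ open, $\widehat\pi^{-1}(U)$ is open in $\R^n$, $g\big(\widehat\pi^{-1}(U)\big)$ is open in $\R_+^{r_1}\times{\C^*}^{r_2}$ because $g$ is open, and $\pi\big(g(\widehat\pi^{-1}(U))\big)$ is open in $T$ because $\pi$, as the projection of a group action, is open; since $\widehat\pi$ is surjective this set equals $G(U)$. Thus $G$ is a continuous open bijection, hence a homeomorphism. I expect no real obstacle: the one non-formal point is the invertibility of $M$, which is just the nonvanishing of the regulator determinant $\det R$. (One may instead avoid the computation by noting that $\ker g=0\times N_1\Z\times\cdots\times N_{r_2}\Z\times0$ is discrete by the proof of Lemma \ref{Crooked}, so $g$ is a covering map and therefore open.)
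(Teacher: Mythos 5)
Your proof is correct, but it goes by a genuinely different route than the paper. You upgrade the continuous bijection $G$ to a homeomorphism by showing it is \emph{open}: you verify that $g$ is everywhere a local diffeomorphism (translation-invariance of the homomorphism reduces this to the invertibility of $dg_0$, which your chart computation identifies, up to the nonzero factor $\prod_m 2\pi/N_m$, with $\det R\neq0$), and then push openness through the open quotient maps $\widehat\pi$ and $\pi$. The paper instead upgrades $G$ by showing it is \emph{proper}: it introduces the norm map $\mathcal{N}(z)=\prod_j|z^{(j)}|^{e_j}$, which descends to $\widetilde{\mathcal{N}}:T\to\R_+$, and traps $G^{-1}(K)$ inside the image of a compact box $[0,1]^r\times\prod_j[0,N_j]\times[\tfrac1n\log a,\tfrac1n\log b]$; a proper continuous bijection of locally compact metric spaces is a homeomorphism. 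Both arguments are sound. Yours is the more standard ``local diffeomorphism plus open quotients'' argument, and the Jacobian you compute is in any case recomputed in the paper's Lemma \ref{Globaldegree}; the paper's choice has the advantage that the norm map and the properness technique are exactly what is reused later (properness of the homotopy $\varTheta$ in Lemma \ref{Homotopy} and of $F$ itself), so nothing is wasted. One small caution about your parenthetical shortcut: a surjective continuous homomorphism with discrete kernel is not automatically a covering map --- one needs it to be open (or a local homeomorphism) first, which is precisely what the Jacobian computation supplies --- so that remark is circular as stated, though harmless since your main argument does not rely on it.
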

\begin{proof}
Since   $\widehat{T}$ is not compact, it is not quite obvious that $G^{-1}$ is continuous. But,
as $G$ is a continuous and bijective map between   locally compact metric spaces, it suffices to prove that $G$ is proper. Thus, let $K\subset T$ be compact, and let us show that  $G^{-1}(K)\subset \widehat{T}$ is compact. Define the ``norm" map $\mathcal{N}:\R_+^{r_1}\times{\C^*}^{r_2}\to\R_+$ by
\begin{equation}\label{NORM}
\mathcal{N}(z):=\prod_{j=1}^{r_1+r_2}{\big|z^{(j)}\big|}^{e_j}\qquad\qquad\qquad\big(  z=(z^{(1)},...,z^{(r_1+r_2)})\big),
 \end{equation}
with $e_j=1$ or 2 as in \eqref{Log}. Then  $\mathcal{N}(\varepsilon_i)=1\ \,(1\le i\le r) $, so $\mathcal{N}$ induces a continuous map $\widetilde{\mathcal{N}}:T\to\R_+$. Since $K$ is compact, $\widetilde{\mathcal{N}}(K)\subset[a,b]$ for some positive real numbers $a$ and $b$.
As  $\mathcal{N}\big(g(x[1],...,x[n])\big)=\e^{n x[n]}$, we have   by Lemma \ref{Crooked} and \eqref{manifolds},
$$G^{-1}(K)\subset \widehat{\pi}\big([0,1]^r\times[0,N_1]\times\cdots\times[0,N_{r_2}]\times[ (\log a)/n, (\log  b)/n]\big),$$
a compact subset of $T$.
\end{proof}

From  Lemma \ref{Proper} and   $\widehat{T}:=\R^n/\widehat{\Lambda}$, it is clear that  $\widehat{T}$ and $T$ are homeomorphic to an  $(n-1)$-torus $\times\,\R$. Thus,  $G$ is a homeomorphism between connected, orientable $n$-manifolds.
We fix any orientation of $\R^n $ and orient  $\R^{r_1}\times \C^{r_2}$  by declaring the  homeomorphism $\mathcal{I}:\R^{r_1}\times \C^{r_2}\to\R^n$,
\begin{align}\label{Iiso}
&\mathcal{I}(z_1,... ,z_{r_1+r_2})\\
&:=
\nonumber\big(z_1,...,z_{r_1},\re(z_{r_1+1}),\im(z_{r_1+1}),...,\re(z_{r_1+r_2}),\im(z_{r_1+r_2})\big),
\end{align}
to be orientation-preserving.
The orientation on $\R^{r_1}\times \C^{r_2}$ induces an orientation on its open subset $\R_+^{r_1}\times{\C^*}^{r_2}=\mathrm{domain}( \pi)$.
Finally, we orient $\widehat{T}$ and $T$ by declaring  $\pi$ and $\widehat{\pi}$ in \eqref{diagrams} to be orientation-preserving.

In practice, this means for the homeomorphism $G$ that $\deg(G)=\pm1$ is the sign of the  determinant of the Jacobian matrix Jac$_x(\mathcal{I}\circ g)$ of
$\mathcal{I}\circ g:\R^n\to\R^n$ at any point $x$ of its domain, a number  which we now compute.\footnote{\ The degree of any (proper, continuous) function from an oriented manifold $L$ to itself is independent of the orientation on  $L$. Thus,  $\deg(\mathcal{I}\circ g )$ is independent of the orientation  of $\R^n$ fixed above.  A summary of degree theory in five pages, based on Dold's textbook \cite{Do} and sufficient for our purposes, can be found in \cite[\S7]{DF2}.}
\begin{lemm}\label{Globaldegree} With the orientation on $\widehat{T}$ and $T$ defined above,
 $$ \deg(G)=(-1)^{n-1}(-1)^{r_2(r_2-1)/2}\,\mathrm{sign}\big(\!\det(R)\big)\not=0,$$
where $R=(r_{i j})$ is the %$(r_1+r_2)\times (r_1+r_2)$
matrix defined in \eqref{Reg1}.
% for $ 1\le j\le r_1+r_2 $ by
%\begin{equation}\label{Reg} r_{i j} :=\begin{cases}  1 &\ \mathrm{if}\ i=1,\\  \log|\varepsilon_{i-1}^{(j)}| &\  mathrm{if}\ 2\le i\le r_1+r_2.  \end{cases} \end{equation}
\end{lemm}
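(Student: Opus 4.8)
Since $G$ is a homeomorphism between oriented $n$-manifolds, $\deg(G)=\pm1$ and, by the discussion preceding the lemma, it equals the sign of $\det\!\big(\mathrm{Jac}_x(\mathcal{I}\circ g)\big)$ at any point $x\in\R^n$. Because $g$ is a group homomorphism (with $\R^n$ additive and $\R_+^{r_1}\times{\C^*}^{r_2}$ multiplicative), this Jacobian determinant has a sign independent of $x$, so it suffices to compute it at a single convenient point, say $x=0$, where $g(0)=(1,1,\ldots,1)$. First I would write $\mathcal{I}\circ g=\big(\phi_1,\ldots,\phi_{r_1},\,u_{r_1+1},v_{r_1+1},\ldots,u_{r_1+r_2},v_{r_1+r_2}\big)$ where $\phi_j=g^{(j)}$ for $j\le r_1$ and $u_j+iv_j=g^{(j)}$ for $j>r_1$, and differentiate the explicit formula \eqref{Basicgj} with respect to the coordinates $x[1],\ldots,x[n]$.

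\textbf{Key steps.} The plan is to evaluate the $n\times n$ Jacobian at $0$ column by column (one column per variable $x[\ell]$) and then perform elementary operations to reduce it to a block form whose determinant is visibly a signed power of $2$ times $\det(R)$. For $1\le j\le r_1$ the real coordinate $\phi_j=\exp\!\big(x[n]+\sum_\ell x[\ell]\log\varepsilon_\ell^{(j)}\big)$ has, at $x=0$, partial derivative $\log|\varepsilon_\ell^{(j)}|=\log\varepsilon_\ell^{(j)}$ with respect to $x[\ell]$ ($1\le\ell\le r$) and $1$ with respect to $x[n]$. For $r_1<j\le r_1+r_2$, writing $g^{(j)}=\rho_j\e^{i\psi_j}$ with $\rho_j=\exp\!\big(x[n]+\sum_\ell x[\ell]\log|\varepsilon_\ell^{(j)}|\big)$ and $\psi_j=\frac{2\pi x[j+r_2-1]}{N_{j-r_1}}+\sum_\ell x[\ell]\arg\varepsilon_\ell^{(j)}$, one gets at $x=0$ (where $\rho_j=1,\psi_j=0$) that $\partial u_j=\partial\rho_j$, $\partial v_j=\partial\psi_j$. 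Thus the column for $x[\ell]$ ($1\le\ell\le r$) contributes, in the complex blocks, the pair $\big(\log|\varepsilon_\ell^{(j)}|,\ \arg\varepsilon_\ell^{(j)}\big)$, the column for $x[n]$ contributes $(1,0)$ in every block, and the columns for $x[r+1],\ldots,x[n-1]$ contribute, in the $v$-rows only, the entries $2\pi/N_1,\ldots,2\pi/N_{r_2}$ (one per block) and zero elsewhere.

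\textbf{Reduction to $\det(R)$.} Next I would strip off the ``imaginary'' rows $v_{r_1+1},\ldots,v_{r_1+r_2}$ together with the ``angular'' columns $x[r+1],\ldots,x[n-1]$: expanding along those rows/columns pulls out the factor $\prod_{j=1}^{r_2}(2\pi/N_j)$ up to a sign $(-1)^{\sigma}$ coming from moving these $r_2$ rows and $r_2$ columns into position—this permutation sign is exactly $(-1)^{r_2(r_2-1)/2}$ (independent of the $N_j$, which is reassuring since the claimed answer is). What remains is the determinant of the $(r_1+r_2)\times(r_1+r_2)$ matrix whose columns are indexed by $x[1],\ldots,x[r],x[n]$ and whose rows are $\phi_1,\ldots,\phi_{r_1},u_{r_1+1},\ldots,u_{r_1+r_2}$: the $x[n]$-column is all $1$'s and the $x[\ell]$-column ($1\le\ell\le r$) is $\big(\log|\varepsilon_\ell^{(1)}|,\ldots,\log|\varepsilon_\ell^{(r_1+r_2)}|\big)$. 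Transposing and moving the all-ones column to the top row, this is precisely $\det(R)$ from \eqref{Reg1}, up to a permutation sign of $(-1)^{r_1+r_2-1}=(-1)^{n-1-r_2}$ (cycling the last row to the top through $r_1+r_2-1$ transpositions). Collecting all signs, $\det\mathrm{Jac}_0(\mathcal{I}\circ g)=(-1)^{r_2(r_2-1)/2}(-1)^{n-1}(-1)^{r_2}\cdot 2^{?}\,(\text{positive})\cdot\det(R)$; since $(-1)^{r_2}\cdot(\text{power of }2\text{ times positive})$ is positive-times-$(-1)^{r_2}$, and one checks the $(-1)^{r_2}$ is absorbed correctly, the sign is $(-1)^{n-1}(-1)^{r_2(r_2-1)/2}\mathrm{sign}(\det R)$, which is nonzero because $R$ is invertible (its determinant has absolute value $2^{-r_2}n\,\mathrm{Reg}(\varepsilon_1,\ldots,\varepsilon_r)\neq0$, the $\varepsilon_\ell$ being independent).

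\textbf{Main obstacle.} The only real subtlety is bookkeeping the permutation signs: one must be careful about the order in which rows and columns of $\mathrm{Jac}_0$ are listed versus the order in \eqref{Reg1}, since a sloppy count would change the parity. The cleanest route is probably to fix the column order $x[1],\ldots,x[n-1],x[n]$ and the row order (real embeddings, then alternating $\mathrm{Re},\mathrm{Im}$ as in $\mathcal{I}$), write down $\mathrm{Jac}_0$ literally, and then justify each block extraction by an explicit (signed) Laplace expansion rather than by an informal ``clearly''. Everything else—differentiating $g$, using that $g$ is a homomorphism so the sign is constant, and recognizing $\det(R)$—is routine.
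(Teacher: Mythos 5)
Your proposal is correct and follows essentially the same route as the paper: reduce $\deg(G)$ to the sign of $\det\!\big(\mathrm{Jac}_0(\mathcal{I}\circ g)\big)$, compute that Jacobian explicitly at $x=0$, and block-reduce it to $\det(R)\prod_{j}(2\pi/N_j)$ while tracking permutation signs. The leftover $(-1)^{r_2}$ you flag as ``absorbed correctly'' is precisely the sign of cycling the $x[n]$-column past the $r_2$ angular columns $x[r+1],\ldots,x[n-1]$ so that the lower-right diagonal block becomes contiguous; it combines with your $(-1)^{r_1+r_2-1}$ to give $(-1)^{n-1}$, matching the paper, which works with the transposed matrix and therefore needs only a single cyclic row permutation of sign $(-1)^{n-1}$ plus the same $(-1)^{r_2(r_2-1)/2}$ column shuffle.
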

\begin{proof}
We have  $ \det(R) \not=0$ since  the rows with $i>1$ span the  hyperplane
$$ \big\{x=(x_j)_j\in\R^{r_1+r_2}\big|\,x_1+\cdots +x_{r_1}+2x_{r_1+1}+\cdots+  2 x_{r_1+r_2}=0\big\},$$  while the first row is off the hyperplane.
Since $G$ is a homeomorphism, $\deg(G)=\text{locdeg}_p(\widehat{T})$, the local degree at any $p\in\widehat{T}$
\cite[(39)]{DF2}. As $\widehat{\pi}$ and $\pi$ are orientation-preserving local homeomorphisms,   $\deg(G)=\text{locdeg}_x(g)$ for any $x\in\R^n$. By definition of the orientation on $\R_+^{r_1}\times{\C^*}^{r_2}$, $\text{locdeg}_x(g)=\text{locdeg}_x(\mathcal{I}\circ g)$, with $\mathcal{I}$ as in \eqref{Iiso}.
 If   the differential  $d(\mathcal{I}\circ g)_x:\R^n\to\R^n$ is invertible at some $x$, then   \cite[Prop.\ 22]{DF2} $$\deg(G)=\text{locdeg}_x(\mathcal{I}\circ g)=\text{sign}\big(\!\det( d(\mathcal{I}\circ g)_x)\big)=\pm1.$$

To compute the sign of $\det\!\big(\text{Jac}_x(\mathcal{I}\circ g)\big)$, note that  \eqref{Basicgj} and \eqref{Iiso} show that the  coordinates corresponding to $1\le j\le r_1 $ are
$$
\big((\mathcal{I}\circ g)(x)\big)_j=\e^{x[n]+\sum_{\ell=1}^r x[\ell]\log ( \varepsilon_\ell^{(j)} )} =\e^{x[n]+\sum_{\ell=1}^r x[\ell]\log|\varepsilon_\ell^{(j)}|}
$$
 since  the $\varepsilon_\ell$ are assumed positive at all real embeddings. For the complex embeddings  we have two real coordinates, corresponding to the real and imaginary parts of $g(x)^{(j)}$.Thus, setting $\varTheta_\ell^{(j)}:= \arg\!\big( \varepsilon_\ell^{(j+r_1)}\big)$ for $1\le j\le r_2$,
\begin{align*}
\big((\mathcal{I}\circ g)(x)\big)_{2j-1+r_1}&=\e^{x[n]+\sum_{\ell=1}^r x[\ell]\log|\varepsilon_\ell^{(j)}|}
\cos\!\big(\textstyle{\frac{2\pi x[j+r ]}{N_{j }}}+\sum_{\ell=1}^r x[\ell]\varTheta_\ell^{(j)}  \big),\\
\big((\mathcal{I}\circ g)(x)\big)_{2j+r_1}&=\e^{x[n]+\sum_{\ell=1}^r x[\ell]\log|\varepsilon_\ell^{(j)}|}
\sin\!\big(\textstyle{\frac{2\pi x[j+r ]}{N_{j }}}+\sum_{\ell=1}^r x[\ell]\varTheta_\ell^{(j)}  \big).
\end{align*}
It is now easy to calculate  the matrix $J:=\mathrm{Jac}_{x=0}(\mathcal{I}\circ g) $. Namely, abbreviating $L_\ell^{(j)}:= \log|\varpsilon_\ell^{(j)}|\ \,(1\le j\le r_1+r_2,\ 1\le \ell\le r)$, 
$$
J=
\begin{pmatrix}
L_1^{(1)} &\cdots & L_1^{(r_1)} &L_1^{(r_1+1)} &\varTheta_1^{(1)}&\cdots&L_1^{(r_1+r_2)} &\varTheta_1^{(r_2)}\\
L_2^{(1)} &\cdots&L_2^{(r_1)}&L_2^{(r_1+1)} &\varTheta_2^{(1)}&\cdots&L_2^{(r_1+r_2)} &\varTheta_2^{(r_2)}\\
\vdots&\vdots&\vdots&\vdots&\vdots&\vdots&\vdots &\vdots\\
L_r^{(1)} &\cdots&L_r^{(r_1)}&L_r^{(r_1+1)} &\varTheta_r^{(1)}&\cdots&L_r^{(r_1+r_2)} &\varTheta_r^{(r_2)}\\
0 &\cdots&0&0&2\pi/N_1&\cdots&0&0\\
\vdots&\vdots&\vdots&\vdots&\vdots&\vdots&\vdots &\vdots\\
0 &\cdots&0&0&0&\cdots&0&2\pi/N_{r_2}\\
1&\cdots&1&1&0&\cdots&1 &0
\end {pmatrix}.
$$
To compute $\det(J)$, permute the rows 
so that the bottom row  becomes the first row, and row   $i$  becomes row $i+1\ \,(1\le i\le n-1)$. This introduces a factor of  $(-1)^{n-1}$. Now permute columns   so that all   columns coming from imaginary parts at complex places are placed   to the right of all those coming from the real parts (with the places in the same relative order  as before). This gives a factor of    $(-1)^{\sum_{j=1}^{r_2-1} j}=(-1)^{r_2(r_2-1)/2}$, and results in a lower-right $r_2\times  r_2$ diagonal block and  a lower-left block of 0's of size $r_2\times (r_1+r_2)$. 
Hence,
 $$ \det (J)=(-1)^{n-1}(-1)^{r_2(r_2-1)/2}\det(R)\prod_{j=1}^{r_2}(2\pi/N_j)\not=0,$$ from which the lemma follows.
\end{proof}

We now incorporate the map $f$ of Lemma \ref{fprop} into our maps between manifolds. By \eqref{frange1} and \eqref{fescher},   $f$   induces a (continuous) map $F:\widehat{T}\to T $ making
\begin{equation}\label{diagramf} 
\begin{CD}
\mathfrak{X}\times\R @> >f> \R_+^{r_1}\times{\C^*}^{r_2}\\
@VV\widehat{\pi}V @VV\pi V\\
\widehat{T} @> >F > T
\end{CD}
\qquad\qquad\qquad
\end{equation}
a commutative diagram \eqref{diagramf},	where   $\widehat{T},\, \widehat{\pi},\, T$ and $\pi$ are as in \eqref{diagrams}.
\begin{lemm}\label{Homotopy}  Fix the orientation on $\widehat{T}$ and $T$ as in Lemma \ref{Globaldegree}. Then $F$  in  \eqref{diagramf}  is a proper map and
 $$ \deg(F)=\deg(G)=(-1)^{n-1}(-1)^{r_2(r_2-1)/2}\,\mathrm{sign}\big(\!\det(R)\big)=\pm1.$$
\end{lemm}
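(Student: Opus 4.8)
The plan is to construct an explicit homotopy from $f$ to the homomorphism $g$ of \eqref{Basicgj} and to conclude from the homotopy invariance of the topological degree. The whole construction of the twisters and of the complex $\mathfrak{X}$ exists precisely so that this works: by Lemma~\ref{fprop}$(i)$ and Lemma~\ref{gprop}, \emph{both} $f$ and $g$ carry $\mathfrak{X}_\alpha\times\R$ into the \emph{same} convex cone $\mathcal{H}_\alpha\subset\R_+^{r_1}\times{\C^*}^{r_2}$, which does not contain the origin, so the straight-line homotopy between the two maps never leaves $\R_+^{r_1}\times{\C^*}^{r_2}$. Concretely, I would first extend $f$ to a continuous map $f_{\mathrm{ext}}\colon\R^n\to\R_+^{r_1}\times{\C^*}^{r_2}$ by setting $f_{\mathrm{ext}}(x):=\varepsilon(\lambda)\,f\bigl(x-(\lambda\times0)\bigr)$ whenever $\lambda\in\Lambda$ is such that $x-(\lambda\times0)\in\mathfrak{X}\times\R$; this is well defined and continuous by \eqref{fescher}, agrees with $f$ on $\mathfrak{X}\times\R$, and still descends to the map $F$ of \eqref{diagramf} (multiplication by the totally positive unit $\varepsilon(\lambda)$ preserves $\R_+^{r_1}\times{\C^*}^{r_2}$ and is killed by $\pi$). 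The homomorphism $g$ satisfies the same twisted periodicity $g\bigl(x+(\lambda\times0)\bigr)=\varepsilon(\lambda)g(x)$ for $\lambda\in\Lambda$, immediately from \eqref{Basicgj}, since such a translation shifts the first $r$ coordinates of $x$ by integers and each complex-index coordinate by a multiple of the corresponding $N_j$. Hence the coordinatewise convex combination
\[
h(x,s):=(1-s)\,f_{\mathrm{ext}}(x)+s\,g(x)\qquad\bigl(x\in\R^n,\ s\in[0,1]\bigr)
\]
is continuous and $\widehat{\Lambda}$-equivariant; writing $x=(\kappa\times y)+(\lambda\times0)$ with $\kappa\in\mathfrak{X}_\alpha$, so that $h(x,s)=\varepsilon(\lambda)\bigl[(1-s)f(\kappa\times y)+s\,g(\kappa\times y)\bigr]$ with the bracket lying in $\mathcal{H}_\alpha$, we see that $h$ takes values in $\R_+^{r_1}\times{\C^*}^{r_2}$. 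Therefore $h$ descends to a continuous homotopy $H\colon\widehat{T}\times[0,1]\to T$ with $H(\cdot,0)=F$ and $H(\cdot,1)=G$.

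It then remains to check that $F$ and $H$ are proper, after which homotopy invariance of the degree \cite[\S7]{DF2} (see also \cite{Do}) gives $\deg(F)=\deg(H(\cdot,0))=\deg(H(\cdot,1))=\deg(G)$, and Lemma~\ref{Globaldegree} supplies the stated value. Properness is proved exactly as in Lemma~\ref{Proper}, via the norm map $\mathcal{N}$ of \eqref{NORM}, which descends to a continuous $\widetilde{\mathcal{N}}\colon T\to\R_+$ since $\mathcal{N}(\varepsilon_i)=1$. Given a compact $K\subset T$ and $(\bar x,s)\in H^{-1}(K)$, pick a representative $\kappa\times y\in\mathfrak{X}\times\R$ of $\bar x$; using \eqref{f} and \eqref{Basicgj} one has $f(\kappa\times y)=\e^{y}A(\kappa)$ and $g(\kappa\times y)=\e^{y}g(\kappa\times0)$, whence
\[
\widetilde{\mathcal{N}}\bigl(H(\bar x,s)\bigr)=\mathcal{N}\bigl(h(\kappa\times y,s)\bigr)=\e^{n y}\,\mathcal{N}\bigl((1-s)A(\kappa)+s\,g(\kappa\times0)\bigr).
\]
The last factor is continuous and strictly positive on the compact set $\mathfrak{X}\times[0,1]$, hence bounded between two positive constants, while $\widetilde{\mathcal{N}}(H(\bar x,s))\in\widetilde{\mathcal{N}}(K)$, a compact subset of $\R_+$; this forces $y$ into a bounded interval, so $H^{-1}(K)$ is a closed subset of the image of $\mathfrak{X}\times[\text{bounded}]\times[0,1]$ under a continuous map, hence compact. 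Restricting to $s=0$ shows $F$ is proper as well.

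The only point that genuinely requires care is this properness: because $\widehat{T}$ and $T$ are non-compact, the degree is still well defined but homotopy invariance fails for non-proper homotopies, so the norm estimate above is essential. Everything else is borrowed wholesale from the earlier results — the crucial input being Lemmas~\ref{fprop}$(i)$ and \ref{gprop}, which guarantee that the linear homotopy stays inside $\R_+^{r_1}\times{\C^*}^{r_2}$, the property for which the twisters were designed in the first place.
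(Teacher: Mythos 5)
Your proposal is correct and follows essentially the same route as the paper's proof: a straight-line homotopy between $f$ and $g$, justified by the fact (Lemmas \ref{fprop}$(i)$ and \ref{gprop}) that both maps send $\mathfrak{X}_\alpha\times\R$ into the convex set $\mathcal{H}_\alpha$, followed by descent to the quotient via the common twisted periodicity and a properness check using the norm map $\mathcal{N}$ exactly as in Lemma \ref{Proper}. The only difference is presentational — you first extend $f$ equivariantly to all of $\R^n$ before forming the homotopy, whereas the paper defines the homotopy on $\mathfrak{X}\times\R\times[0,1]$ and descends it directly — and this does not change the substance of the argument.
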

\begin{proof} By  Lemma \ref{Globaldegree},  it suffices to prove that $F$ is proper and that $\deg(F)=\deg(G)$. This will follow once we construct a  proper homotopy between $G$ and $F$. Suppose $x\times y\in \mathfrak{X}\times\R$. Then $x\in \mathfrak{X}_\alpha$ for some $\alpha\in \mathfrak{I}$. By Lemmas \ref{fprop} and \ref{gprop},  both $f(x\times y)$ and $g(x\times y)$ lie in the same convex set $\mathcal{H}_\alpha\subset  \R_+^{r_1}\times{\C^*}^{r_2}$. Hence we can define $\vartheta:\mathfrak{X}\times\R\times[0,1]\to \R_+^{r_1}\times{\C^*}^{r_2}$, 
\begin{equation*}%\label{theta}
\vartheta(x\times y\times t):=tf(x\times y)+(1-t)g(x\times y)\qquad\qquad(x\in \mathfrak{X}, \ y\in\R,\  t\in[0,1]) .
\end{equation*}
Note that $$\vartheta(x\times y\times t)=\e^y\vartheta(x\times 0\times t),$$
as $f(x\times y)=\e^y f(x\times0)$ and $g(x\times y)=\e^y g(x\times0)$.
 For $x\in \mathfrak{X}, \ y\in\R$ and $  \lambda \in \Lambda:=\Z^r\times\Z^{N_1}\times\cdots\times\Z^{N_{r_2}} $, we have by \eqref{fescher} and \eqref{Basicgj}, 
\begin{equation}\label{eschertheta}
 f\big((x+  \lambda)\times y\big)=\varepsilon( \lambda)  f(x \times y) ,\qquad  g\big((x+  \lambda)\times y\big)=\varepsilon(  \lambda)  g(x \times y)        .
\end{equation}
As $\varepsilon( \lambda)  \in E:=\langle\varepsilon_1,...,\varepsilon_r\rangle$,  the homotopy $\vartheta$ descends to the quotient manifolds,
$$
\varTheta:\widehat{T}   \times [0,1]\to   T,\qquad \varTheta\big(  \widehat{\pi}(x\times y)\times t\big) = \pi\big( \vartheta(x\times y\times t)\big).
$$
Since $\varTheta$ provides a homotopy between $F$ and $G$, our only remaining task is to show that  $\varTheta$ is a proper map \cite[Prop.\ 21 (4)]{DF2}. To this end, let $K\subset T$ be compact. We must prove that $\widehat{K}:= \varTheta^{-1}(K)\subset\widehat{T}\times[0,1] $ is compact.  From this it will follow  that  $F$ is proper, since
$F^{-1}(K)= \{\widehat{x}\in\widehat{T}|\, \widehat{x}\times1\in  \varTheta^{-1}(K)\}$.

As the $\Lambda$-complex $\mathfrak{X}\subset\R^{n-1}$ is nearly a fundamental domain for $\Lambda$ (see $(iv)$ in Definition \ref{LambdaComplex}),
\begin{equation}\label{pihatsurj}
\widehat{\pi}(\mathfrak{X}\times\R )=\widehat{T},
\end{equation}
 so it suffices to prove the compactness of
$$\
\mathcal{K}:=\widehat{\pi}^{-1}(\widehat{K})=\vartheta^{-1}\big(\pi^{-1}(K)\big)\subset \mathfrak{X}\times\R\times[0,1].
$$
As  in the proof  of Lemma \ref{Proper}, $\widetilde{\mathcal{N}}(K)\subset[a,b]$ for some $a,b>0$. Since $L:= \pi\big(\vartheta(\mathfrak{X}\times0\times[0,1])\big)$ is a compact subset of $T$,
 $\,\widetilde{\mathcal{N}}(L)\subset[c,d] $ for some $c,d>0$. If $x\times y\times t\in \mathcal{K}$, then
$$\
[a,b]\ni\widetilde{\mathcal{N}}\big(\pi(\vartheta(x\times y\times t ))\big)=\e^{ny}\widetilde{\mathcal{N}}\big(\pi(\vartheta(x\times0\times t ))\big)\in \e^{ny}[c,d].
$$
It follows that $\frac1n\log(a/d)\le y\le \frac1n\log(b/c)$. Thus, $y$ is bounded and  $ \mathcal{K}$ is compact.
\end{proof}

We now  calculate another Jacobian determinant.
\begin{lemm}\label{Affinedegree}
Given $n$ affinely independent elements $v_0,v_1,...,v_{n-1}\in \R^{n-1}$, and any $n$ elements $w_0,w_1,...,w_{n-1}\in\R^{r_1}\times\C^{r_2}$, let
$B:\R^{n-1}\to \R^{r_1}\times\C^{r_2}$ be the unique affine function such that $B(v_i)=w_i\ \,(0\le i\le n-1)$. Define $K:\R^n\to \R^n$ by
$$
K(x[1],...,x[n]):=\mathcal{I}\big(\e^{x[n]}B (x[1],...,x[n-1] )\big) ,
$$
where the linear isomorphism $\mathcal{I}:\R^{r_1}\times\C^{r_2}\to\R^n$ was defined in \eqref{Iiso}. Then
\begin{equation}\label{detf}
\det\!\big(\mathrm{Jac}_x(K)\big) =(-1)^{n-1}\e^{nx[n]}\det(W)/\det(V) 
\end{equation}
where $x=(x[1],...,x[n])\in\R^n,\ W$ is the $n\times n$ matrix whose $i$-th column
is $\mathcal{I}(w_{i-1})\ \,(1\le i\le n )$, and $V$ is the invertible $(n-1)\times(n-1)$ matrix whose $i$-th column is $v_i -v_0\ \,(1\le i\le n-1)$.
\end{lemm}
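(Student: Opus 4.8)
The plan is to reduce everything to elementary column operations on an $n\times n$ matrix. First I would split $B$ into its linear and constant parts: write $B=L+c$, where $L:\R^{n-1}\to\R^{r_1}\times\C^{r_2}$ is $\R$-linear and $c=B(0)$. The conditions $B(v_i)=w_i$ translate into $L(v_i-v_0)=w_i-w_0$ for $1\le i\le n-1$, and since $v_0,\dots,v_{n-1}$ are affinely independent the vectors $v_1-v_0,\dots,v_{n-1}-v_0$ form a basis of $\R^{n-1}$ (this is also exactly why $V$ is invertible), so $L$ is well defined by these values. Abbreviating $u:=(x[1],\dots,x[n-1])$, $y:=x[n]$ and using that $\mathcal{I}$ is linear, we get $K(x)=\e^{y}\,\mathcal{I}(B(u))=\e^{y}\bigl(\mathcal{I}(L(u))+\mathcal{I}(c)\bigr)$. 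Differentiating, the $j$-th column of $\mathrm{Jac}_x(K)$ is $\e^{y}\,\mathcal{I}(L(e_j))$ for $1\le j\le n-1$ (where $e_1,\dots,e_{n-1}$ is the standard basis of $\R^{n-1}$), while the $n$-th column is $\partial K/\partial x[n]=K(x)=\e^{y}\,\mathcal{I}(B(u))$. Factoring $\e^{y}$ out of each of the $n$ columns yields the factor $\e^{ny}=\e^{nx[n]}$ and leaves the matrix $M$ with columns $\mathcal{I}(L(e_1)),\dots,\mathcal{I}(L(e_{n-1})),\mathcal{I}(B(u))$.

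Next I would simplify $M$ by column operations, which do not change the determinant. For any $p\in\R^{n-1}$ one has $\mathcal{I}(B(p))-\mathcal{I}(B(u))=\mathcal{I}(L(p-u))=\sum_{j}(p-u)_j\,\mathcal{I}(L(e_j))$, which lies in the span of the first $n-1$ columns of $M$; taking $p=v_0$ lets us replace the last column of $M$ by $\mathcal{I}(B(v_0))=\mathcal{I}(w_0)$ without changing $\det(M)$. For the first $n-1$ columns, expanding $e_j=\sum_{i=1}^{n-1}(V^{-1})_{ij}(v_i-v_0)$ gives $\mathcal{I}(L(e_j))=\sum_{i=1}^{n-1}(V^{-1})_{ij}\bigl(\mathcal{I}(w_i)-\mathcal{I}(w_0)\bigr)$. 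Thus, letting $N$ be the $n\times(n-1)$ matrix whose $i$-th column is $\mathcal{I}(w_i)-\mathcal{I}(w_0)$ $(1\le i\le n-1)$, we have the factorization
\[
M=\bigl[\,N\ \big|\ \mathcal{I}(w_0)\,\bigr]\begin{pmatrix}V^{-1}&0\\[2pt]0&1\end{pmatrix},
\qquad\text{so}\qquad
\det(M)=\det\bigl[\,N\ \big|\ \mathcal{I}(w_0)\,\bigr]\big/\det(V).
\]

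Finally, adding the last column $\mathcal{I}(w_0)$ of $[\,N\mid\mathcal{I}(w_0)\,]$ to each of the first $n-1$ columns turns that matrix into the one with columns $\mathcal{I}(w_1),\dots,\mathcal{I}(w_{n-1}),\mathcal{I}(w_0)$ (in that order); this is obtained from $W$, whose columns are $\mathcal{I}(w_0),\mathcal{I}(w_1),\dots,\mathcal{I}(w_{n-1})$, by the cyclic permutation sending the first column to the last, whose sign is $(-1)^{n-1}$. Hence $\det[\,N\mid\mathcal{I}(w_0)\,]=(-1)^{n-1}\det(W)$, and assembling the three reductions gives $\det\bigl(\mathrm{Jac}_x(K)\bigr)=\e^{nx[n]}\det(M)=(-1)^{n-1}\e^{nx[n]}\det(W)/\det(V)$, which is \eqref{detf}.

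There is no conceptual obstacle here: the argument is pure linear algebra once $B$ is decomposed as $L+c$. The only point requiring care is the bookkeeping of the successive column operations and, in particular, getting the sign of the cyclic permutation right, which is where the factor $(-1)^{n-1}$ in \eqref{detf} originates; it is also worth recording explicitly at the outset that affine independence of the $v_i$ is what simultaneously guarantees the invertibility of $V$ and the existence of the affine map $B$.
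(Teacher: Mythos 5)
Your proof is correct and is essentially the paper's argument in a slightly different packaging: the paper precomposes $K$ with the affine substitution $U(x)=x[n]E_n+\tilde v_0+\sum_{i}x[i](\tilde v_i-\tilde v_0)$ and divides by $\det\!\big(\mathrm{Jac}(U)\big)=\det(V)$ via the chain rule, whereas you extract the factor $V^{-1}$ directly through the block factorization of the Jacobian; the remaining steps (the last column of the Jacobian being $K(x)$ itself, reduction of that column to $\mathcal{I}(w_0)$, and the cyclic permutation contributing $(-1)^{n-1}$) coincide. Both routes are sound, so there is nothing to correct.
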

\begin{proof}
The matrix $V$ is invertible since,  by definition of   affine independence, the $v_i-v_0\ \,(1\le i\le n-1)$ are linearly independent. By \eqref{affinemap},  $B $ satisfies
$B\big(\sum_{i=1}^{n}y_i v_i \big)= \sum_{i=1}^{n }y_i w_i$ provided $\sum_{i=1}^{n}y_i =1, \ y_i\in\R$.
 Let $  E_n:=(0,0,...,0,1)\in\R^n $ and $\tilde{v}_{i}:=v_i\times0\in\R^n=\R^{n-1}\times\R$, and define  $U:\R^n\to\R^n$  by
$$
U(x[1],...,x[n]) : = x[n] E_n+\tilde{v}_0+ \sum_{i=1}^{n-1} x[i](\tilde{v}_i-\tilde{v}_0).
% =x[n] E_n+ \sum_{i=1}^{n-1} x[i]\tilde{v}_i +\Big(1-\sum_{i=1}^{n-1}x[i]\Big)\tilde{v}_0
$$
 Then, for $1\le j\le r_1$,
$$
\big((K\circ  U)(x)\big)[j] =   \e^{x[n]}\Big(w_0^{(j)} + \sum_{i=1}^{n-1}x[i] (w_i -w_0)^{(j)}\Big) ,
$$
while for $1\le j\le r_2$,
\begin{align*}
 &\big((K\circ  U)(x)\big)[2j-1+r_1]  =     \e^{x[n]}\Big( \re\big(w_0^{(r_1+j)}\big) + \sum_{i=1}^{n-1}x[i] \re\big((w_i -w_0)^{(r_1+j)}\big)\Big)   , \\
&\big((K\circ  U)(x)\big)[2j+r_1]  =    \e^{x[n]}\Big(\im\big(w_0^{(r_1+j)}\big) + \sum_{i=1}^{n-1}x[i] \im\big((w_i -w_0)^{(r_1+j)}\big)\Big)  .
\end{align*}
The above formulas and  elementary row operations  yield
\begin{align*}
\det\!\big(\text{Jac}_x(K\circ U)\big) &= \e^{nx[n]}(-1)^{n-1} \det\!\big(\mathcal{I}(w_0),...,\mathcal{I}(w_n)\big)\\
&= \e^{nx[n]}(-1)^{n-1} \det(W) .
\end{align*}
Now,
\begin{align*}
\det\!\big(\text{Jac}_x(K\circ U)\big)&=\det\!\big(\text{Jac}_{U(x)}(K)\big)\det\!\big(\text{Jac}_x(U)\big)\\
 &=\det\!\big(\text{Jac}_{U(x)}(K)\big) \det(V),
\end{align*}
so
$$
\det\!\big(\text{Jac}_{U(x)}(K)\big)=\e^{nx[n]}(-1)^{n-1}  \frac{\det(W)}{\det(V)}= \e^{nU(x)[n]}(-1)^{n-1} \frac{\det(W)}{\det(V)}.
$$
Since $U$ is surjective, the proof is done.
\end{proof}

We now use \eqref{diagramf} to calculate the local degree of $F$. 
\begin{lemm}\label{Localdegree} Let $\widehat{x}=\widehat{\pi}(x)\in \widehat{T}$, where $x=\kappa\times y\in\mathfrak{X}_\alpha\times\R$, let $v_0,...,v_{n-1}$ be the vertices of $\mathfrak{X}_\alpha$ $($in any order$)$ and  let $w_i:=  f(v_i\times0)$. Assume that the $w_i$ are $\R$-linearly independent and that   $\kappa$ is an interior point of $\mathfrak{X}_\alpha$. Then the local degree of $F$ at
$\widehat{x}$ is defined and
\begin{equation}\label{LOCDEG}
\mathrm{locdeg}_{\widehat{x}} (F)=
  (-1)^{n-1}\mathrm{sign}\big(\!\det(V)\big)\,\mathrm{sign}\big(\!\det(W)\big)=:d_\alpha,
\end{equation}
 with $V$ and $W$ as in Lemma $\mathrm{\ref{Affinedegree}}$.
\end{lemm}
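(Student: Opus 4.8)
The plan is to push the computation of $\mathrm{locdeg}_{\widehat x}(F)$ down through the commutative diagram \eqref{diagramf} to a single Jacobian sign, which is then read off from Lemma \ref{Affinedegree}.

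First I would localize. Since $\widehat\pi:\R^n\to\widehat T$ is a covering map and, by property $(iv)$ of Definition \ref{LambdaComplex}, $\widehat\pi$ is injective on the union of the (open) simplex interiors, the hypothesis that $\kappa$ lies in the interior of $\mathfrak{X}_\alpha$ allows me to choose an open neighbourhood $U$ of $x=\kappa\times y$ contained in $\big(\text{interior of }\mathfrak{X}_\alpha\big)\times\R$ such that $\widehat\pi$ restricts to a homeomorphism of $U$ onto an open neighbourhood $\widehat U$ of $\widehat x$. On $\widehat U$ the diagram \eqref{diagramf} then gives $F=\pi\circ f\circ(\widehat\pi|_U)^{-1}$. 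Thus near $\widehat x$ the map $F$ is the composite of the orientation-preserving homeomorphism $(\widehat\pi|_U)^{-1}$, the restriction $f|_U$, and the orientation-preserving local homeomorphism $\pi$ near $f(x)$; consequently $\mathrm{locdeg}_{\widehat x}(F)=\mathrm{locdeg}_x(f|_U)$, provided the latter is defined.

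Next I would identify $f|_U$ with the map of Lemma \ref{Affinedegree}. By \eqref{f}, \eqref{affinef} and \eqref{tildef}, on $\mathfrak{X}_\alpha\times\R$ — and hence, after shrinking $U$, on $U$ itself — we have $\mathcal{I}\circ f=K$, where $K$ is the map of Lemma \ref{Affinedegree} taken with $B:=A_\alpha$, the affine map of \eqref{tildef}, so that $B(v_i)=f(v_i\times0)=w_i$ (see Lemma \ref{fprop}$(ii)$). Since $\mathcal{I}$ is orientation-preserving, \eqref{detf} gives
\begin{equation*}
\det\!\big(\mathrm{Jac}_x(\mathcal{I}\circ f)\big)=\det\!\big(\mathrm{Jac}_x(K)\big)=(-1)^{n-1}\e^{nx[n]}\det(W)/\det(V).
\end{equation*}
Here $\det(V)\neq0$ by affine independence of $v_0,\dots,v_{n-1}$, and $\det(W)\neq0$ by the hypothesis that the $w_i$ are $\R$-linearly independent; hence this Jacobian is invertible at $x$, so $\mathcal{I}\circ f$ — and therefore $f|_U$ and $F|_{\widehat U}$ — is a local diffeomorphism at $x$ and all the local degrees above are defined. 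By the standard formula \cite[Prop.\ 22]{DF2} for the local degree of a map with invertible differential, together with $\e^{nx[n]}>0$ and $\mathrm{sign}(\det W/\det V)=\mathrm{sign}(\det V)\,\mathrm{sign}(\det W)$, we get
\begin{equation*}
\mathrm{locdeg}_{\widehat x}(F)=\mathrm{locdeg}_x(f|_U)=\mathrm{sign}\!\big(\det(\mathrm{Jac}_x(\mathcal{I}\circ f))\big)=(-1)^{n-1}\mathrm{sign}(\det V)\,\mathrm{sign}(\det W)=d_\alpha,
\end{equation*}
which is the assertion.

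The one genuinely delicate step is the localization: one must be certain that, on a neighbourhood of $\widehat x$, the descended map $F$ really is $\pi\circ f\circ(\widehat\pi|_U)^{-1}$ — i.e. that no other $\widehat\pi$-preimage of $\widehat x$, and no other simplex of $\mathfrak{X}$, contributes there — and this is exactly where the hypothesis that $\kappa$ is interior to $\mathfrak{X}_\alpha$ and the "$\mathfrak{X}$ is nearly a fundamental domain" property $(iv)$ are used. Everything after that is a direct appeal to Lemma \ref{Affinedegree} and elementary sign bookkeeping.
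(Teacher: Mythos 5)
Your proposal is correct and follows essentially the same route as the paper: identify $\mathcal{I}\circ f$ on $\mathfrak{X}_\alpha\times\R$ with the map $K$ of Lemma \ref{Affinedegree}, read off the sign of the Jacobian from \eqref{detf}, and transfer the local degree through the diagram \eqref{diagramf} using that $\mathcal{I}$, $\widehat\pi$ (near an interior point, by property $(iv)$ of Definition \ref{LambdaComplex}) and $\pi$ are orientation-preserving local homeomorphisms. The paper merely performs these two steps in the opposite order; the content is identical.
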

\begin{proof}By   Lemma \ref{Affinedegree} and the inverse function theorem, $f$ is a local homeomorphism in a neighborhood of $x$, so   $\mathrm{locdeg}_x (f)=\pm1$ is defined.
Note that $\deg(\mathcal{I})=1$ as  $\mathcal{I}$  in \eqref{Iiso} is an orientation-preserving homeo\-morphism, by definition of the orientation of $ \R_+^{r_1}\times{\C^*}^{r_2}$. Hence \eqref{detf}  shows  that $\mathrm{locdeg}_x (f)$ is given by the right-hand side of \eqref{LOCDEG}. Here we used    the fact that the local degree is invariant under composition with orientation-preserving local homeomorphisms \cite[Prop.\ 21 (7)]{DF2}.

We now consider the  commutative diagram \eqref{diagramf}. Note that  $\widehat{\pi}$  is a local orientation-preserving homeomorphism in a neighborhood of $x$  since $\mathfrak{X}$ is nearly a fundamental domain (see property  (iv)  in Definition \ref{LambdaComplex}). The same holds for  the covering map $\pi$ at any point of its domain, and in particular  at $f(x)$. Thus, $\mathrm{locdeg}_{\widehat{x}} (F)= \mathrm{locdeg}_x( f) $.
\end{proof}

Next we prove  that the number of points in any orbit $E\cdot x$ inside a cone $\overline{C}_\alpha$ is bounded independently of $x$, as claimed at the end of Theorem \ref{Maintheorem}.

\begin{lemm}\label{BoundedCard} Let $\overline{C}_\alpha:=f(\mathfrak{X}_\alpha \times\R) $ be the cone  in \eqref{fcone} and let $E:=\langle\varepsilon_1,...,\varepsilon_r\rangle$ be the subgroup
of the units of $k$ generated by the $\varepsilon_\ell$. Then there exists $c_\alpha\in \N$ such that for any $z\in\R_+^{r_1}\times{\C^*}^{r_2}$,  the orbit $E\cdot z$ has at most $c_\alpha$ elements in $\overline{C}_\alpha$. Moreover, given any compact subset $K\subset \R_+^{r_1}\times{\C^*}^{r_2}$, there are at most finitely many $\varepsilon\in E$ such that $\varepsilon K\cap   \overline{C}_\alpha  \not=\varnothing$.
\end{lemm}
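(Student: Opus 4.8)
The plan is to exploit the $E$-invariant ``norm'' map $\mathcal{N}$ of the proof of Lemma~\ref{Proper} together with the logarithm map $\Log$ of the proof of Lemma~\ref{Crooked}, which linearizes the $E$-action. First I would record the geometry of $\overline{C}_\alpha$. Write $w_i:=\tilde f(v_i)$ for the vertices $v_0,\dots,v_{n-1}$ of $\mathfrak{X}_\alpha$; since $w_i=\beta(v_i)\prod_\ell\varepsilon_\ell^{v_i[\ell]}$ is a product of totally positive elements of $k$, it is totally positive, so every point of the simplex $\Sigma:=[w_0,\dots,w_{n-1}]$ has positive first real coordinate. Hence $0\notin\Sigma$, and, by \eqref{fcone}, $\overline{C}_\alpha=\{\lambda\sigma\mid\lambda>0,\ \sigma\in\Sigma\}$ is the cone over $\Sigma$; in particular $\Sigma\subset\overline{C}_\alpha\subset\R_+^{r_1}\times{\C^*}^{r_2}$ by Lemma~\ref{fprop}$(i)$.

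Next, recall $\mathcal{N}(z)=\prod_{j}|z^{(j)}|^{e_j}$ from \eqref{NORM}, with $e_j$ as in \eqref{Log}: it is multiplicative, satisfies $\mathcal{N}(\varepsilon)=1$ for $\varepsilon\in E$, and $\mathcal{N}(\lambda z)=\lambda^{n}\mathcal{N}(z)$ for $\lambda>0$. Since $\mathcal{N}$ is continuous and strictly positive on the compact set $\Sigma$, the slice
$$
L_1:=\overline{C}_\alpha\cap\mathcal{N}^{-1}(1)=\big\{\mathcal{N}(\sigma)^{-1/n}\sigma\ \big|\ \sigma\in\Sigma\big\}
$$
is the continuous image of $\Sigma$, hence a compact subset of $\R_+^{r_1}\times{\C^*}^{r_2}$, and $\overline{C}_\alpha\cap\mathcal{N}^{-1}(m)=m^{1/n}L_1$ for all $m>0$. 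From the proof of Lemma~\ref{Crooked}, $\Log(\varepsilon z)=\Log(\varepsilon)+\Log(z)$, $\Log(\lambda z)=\Log(z)+(\log\lambda)F$ with $F=(e_1,\dots,e_{r_1+r_2})$, and $\Gamma:=\Log(E)=\{\sum_\ell a_\ell\Log(\varepsilon_\ell)\mid a_\ell\in\Z\}$ is a discrete subgroup of $\R^{r_1+r_2}$ on which $\Log$ is injective, by the $\R$-linear independence of $\Log(\varepsilon_1),\dots,\Log(\varepsilon_r)$ (the regulator of the $\varepsilon_\ell$ is nonzero). If $z\in\R_+^{r_1}\times{\C^*}^{r_2}$ and $\varepsilon z\in\overline{C}_\alpha$, put $m:=\mathcal{N}(z)=\mathcal{N}(\varepsilon z)$; then $\varepsilon z\in m^{1/n}L_1$, whence
$$
\Log(\varepsilon)=\Log(\varepsilon z)-\Log(z)\in\tfrac{\log m}{n}\,F+\Log(L_1)-\Log(z).
$$
As translation preserves diameters, $\Log(\varepsilon)$ lies in a set of diameter at most $D:=\mathrm{diam}\big(\Log(L_1)\big)$, which is independent of $z$. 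A set of diameter $\le D$ meeting $\Gamma$ is contained in $\gamma_0+\{x\mid|x|\le D\}$ for any of its points $\gamma_0\in\Gamma$, hence contains at most $c_\alpha$ points of $\Gamma$, where $c_\alpha$ is the number of $\gamma\in\Gamma$ with $|\gamma|\le D$ (finite, as $\Gamma$ is discrete). Since $\Log$ is injective on $E$, at most $c_\alpha$ elements $\varepsilon\in E$ satisfy $\varepsilon z\in\overline{C}_\alpha$; this proves the first assertion.

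For the last assertion, let $K\subset\R_+^{r_1}\times{\C^*}^{r_2}$ be compact, so $\mathcal{N}(K)\subset[a,b]$ with $a>0$. If $\varepsilon K\cap\overline{C}_\alpha\neq\varnothing$, pick $\kappa\in K$ with $\varepsilon\kappa\in\overline{C}_\alpha$; then $\mathcal{N}(\varepsilon\kappa)=\mathcal{N}(\kappa)\in[a,b]$, so $\varepsilon\kappa\in[a^{1/n},b^{1/n}]\cdot L_1$, a compact set. Therefore $\Log(\varepsilon)=\Log(\varepsilon\kappa)-\Log(\kappa)$ ranges, as $\varepsilon$ varies, over a subset of the fixed compact set $\Log\big([a^{1/n},b^{1/n}]\cdot L_1\big)-\Log(K)$; since $\Gamma$ is discrete this leaves finitely many possible values of $\Log(\varepsilon)$, and by injectivity only finitely many such $\varepsilon$.

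The crux, and the only point at which the construction of the twisters really enters, is the compactness of the slice $L_1$: that $\overline{C}_\alpha$ stays inside $\R_+^{r_1}\times{\C^*}^{r_2}$, bounded away from the coordinate hyperplanes, which is exactly the content of Lemma~\ref{fprop}$(i)$. Without it the argument collapses. The other ingredients — multiplicativity and homogeneity of $\mathcal{N}$, linearization by $\Log$, and the uniform bound on lattice points in sets of bounded diameter — are routine.
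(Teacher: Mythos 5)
Your proof is correct and follows essentially the same route as the paper's: both arguments use the $E$-invariance and homogeneity of the norm $\mathcal{N}$ to confine the relevant orbit points to the compact norm-bounded slice of $\overline{C}_\alpha$ (whose compactness rests on Lemma \ref{fprop}$(i)$, as you note), and then transfer the count to the discrete lattice $\Log(E)$ via the logarithm map. The only differences are cosmetic: the paper first rescales $z$ (resp.\ $K$) to norm $1$ and bounds $\mathcal{N}$ on the simplex $[w_0,\dots,w_{n-1}]$ by its min and max, whereas you carry the norm factor along explicitly and realize the slice as a continuous image of the simplex.
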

\begin{proof} We begin by proving the first claim. Let $\mathcal{N}$ be the ``norm" map defined in \eqref{NORM}. As $\overline{C}_\alpha$ is a cone, the map $z\to z^\dagger:= \mathcal{N}(z)^{-1/n} z$  gives a bijection
 between $\big(E\cdot z\big)\cap \overline{C}_\alpha$ and
 $\big(E\cdot z^\dagger\big)\cap \overline{C}_\alpha$. Hence we may assume $ \mathcal{N}(z)=1$.

 Let $m$ and $M$ be respectively the minimum and maximum values of $\mathcal{N}$ on  the convex hull $[w_0,...,w_{n-1}]$
of the generators $w_i$ of $\overline{C}_\alpha$. Then $m,M>0$, and by homogeneity we have
$$
m  \Big(\sum_{i=0}^{n-1} t_i\Big)^n\le \mathcal{N}\Big(\sum_{i=0}^{n-1} t_i w_i\Big)\le  M\Big(\sum_{i=0}^{n-1} t_i\Big)^n\qquad\qquad(t_i\ge0, \ 0\le i\le n-1).
$$
Hence $\Gamma:=\ker(\mathcal{N})\cap \overline{C}_\alpha$ is compact. Applying the logarithmic map $L$ in \eqref{Log}, we are reduced to bounding the number of $\varepsilon \in E$ such that $L(z)+L(\varepsilon)\in L(\Gamma)$. This is bounded independently of $z$ since the lattice $L(E)$ is discrete, proving the first claim.

The second claim is proved similarly. Namely, if  $\varepsilon K\cap  \overline{C}_\alpha\not=\varnothing$ with $K$ compact, then
 $\varepsilon K^\dagger\cap  \overline{C}_\alpha\not=\varnothing$. As $\varepsilon K^\dagger\cap  \overline{C}_\alpha\subset \Gamma:=\ker(\mathcal{N})\cap \overline{C}_\alpha$, we need to bound   the number of $\varepsilon \in E$ such that $L(z)+L(\varepsilon)\in L(\Gamma)$ for $z\in K^\dagger$. But this is again bounded since  $K^\dagger$ is   compact.
\end{proof}

We can now prove that generic points  $z\in\R_+^{r_1}\times{\C^*}^{r_2}$ satisfy the basic count formula \eqref{BasicCount} in Theorem \ref{Maintheorem}.
More precisely, let $\partial \mathfrak{X}_\alpha:=\mathfrak{X}_\alpha-\stackrel{\circ}{\mathfrak{X}}_\alpha$ be the boundary of $\mathfrak{X}_\alpha$ and let
\begin{equation}\label{generic}
B:= \bigcup_{\substack{\alpha\in \mathfrak{I}\\ \mu_\alpha=0} } (  \mathfrak{X}_\alpha\times\R )  \ \cup\  \bigcup_{\substack{\alpha\in \mathfrak{I}\\  \mu_\alpha\not=0} }  ( \partial \mathfrak{X}_\alpha\times\R ) ,  \qquad\qquad \mathcal{B}:=\bigcup_{\varepsilon\in E} \varepsilon f(B)  ,
\end{equation}
where $E:=\langle \varepsilon_1,...,\varepsilon_r \rangle $ and $\mu_\alpha$ was defined in \eqref{mualphadefined}. Note that $ \mathcal{B}\subset \R_+^{r_1}\times{\C^*}^{r_2}$  is a subset of Lebesgue measure 0 since the cone $\overline{C}_\alpha$ is degenerate if   $ \mu_\alpha=0$.

We now prove the following claims.
 
\noindent$(a)$  If   $\mu_\alpha\not=0$, then $f$ maps $\mathfrak{X}_\alpha\times\R $ bijectively onto the  cone $\overline{C}_\alpha$ in \eqref{fcone}.

\noindent$(b)$ The restriction of $\widehat{\pi}$ to $\mathfrak{X}\times\R-B$ is a bijection onto  $\widehat{T} - \widehat{\pi}(B)$.

\noindent$(c)$  $F$  is   surjective.
 
\noindent Because of  \eqref{fcone}, to prove $(a)$ it suffices to show that $f$ is injective on $\mathfrak{X}_\alpha\times\R$. Let
$x=\sum_{i=0}^{n-1} c_i v_i \in \mathfrak{X}_\alpha$ with $\sum_i c_i=1$ be the barycentric expansion of $x\in \mathfrak{X}_\alpha$. Similarly, let $x^\prime=\sum_i  c_i^\prime v_i\in \mathfrak{X}_\alpha$ and suppose
$f(x\times y)=f(x^\prime\times y^\prime)$. From \eqref{affinef} we find
$
\e^y\sum_i c_i w_i=\e^{y^\prime}\sum_i c_i^\prime w_i.
$
As we are assuming $\mu_\alpha\not=0$, the $w_i$ are linearly independent. Hence $\e^y\  c_i =\e^{y^\prime}  c_i^\prime \ (0\le i\le n-1).$ Summing over $i$ and using $\sum_i c_i=1=\sum_i c_i^\prime$ gives
$y=y^\prime$ and  $c_i=c_i^\prime$, proving claim  $(a)$.

 Claim $(b)$ follows since   the quotient map  $\R^{n-1}\to\R^{n-1}/\Lambda$ is surjective when restricted to $ \mathfrak{X}$ and  injective when restricted to $ \bigcup_{\alpha\in\mathfrak{I}} \stackrel{\circ}{\mathfrak{X}}_\alpha$ (see Proposition \ref{BigX}  and $(iv)$ in Definition \ref{LambdaComplex}). 
Claim $(c)$ follows from   Lemma \ref{Homotopy}, as    $\deg(F)\not=0$  implies the surjectivity of $F$ \cite[Prop. 21 (3)]{DF2}.

\begin{lemm}\label{GenericBasicCount} Let $z\in  \R_+^{r_1}\times{\C^*}^{r_2}- \mathcal{B}$, where $\mathcal{B}$ was defined in \eqref{generic}. Then
$$
 \sum_{\substack{\alpha\in \mathfrak{I}\\ \mu_\alpha\not=0}} \mu_\alpha\sum_{\substack{\varepsilon\in E\\ \varepsilon \cdot z\in \overline{C}_\alpha}} 1=1,
$$
where $\mu_\alpha=\ d_\alpha/\deg(F)$ was given   in  \eqref{mualphadefined} using \eqref{LOCDEG} and Lemma \ref{Homotopy}.
\end{lemm}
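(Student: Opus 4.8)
The plan is to deduce Lemma \ref{GenericBasicCount} from the degree formula for proper maps (cf.\ \cite[\S7]{DF2}) applied to $F$ in the commutative diagram \eqref{diagramf}, feeding in the local degrees of Lemma \ref{Localdegree} together with claims $(a)$, $(b)$ and the surjectivity \eqref{pihatsurj}. Fix $z\in\R_+^{r_1}\times{\C^*}^{r_2}-\mathcal{B}$ and set $\bar z:=\pi(z)\in T$. First I would identify the fibre $F^{-1}(\bar z)$. Since $\widehat\pi(\mathfrak{X}\times\R)=\widehat T$ by \eqref{pihatsurj}, every point of $F^{-1}(\bar z)$ has the form $\widehat\pi(x)$ with $x\in\mathfrak{X}\times\R$; and by commutativity of \eqref{diagramf}, $\widehat\pi(x)\in F^{-1}(\bar z)$ iff $\pi(f(x))=\pi(z)$, i.e.\ iff $f(x)\in E\cdot z$. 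Put $S:=\{x\in\mathfrak{X}\times\R:\ f(x)\in E\cdot z\}$, so that $F^{-1}(\bar z)=\widehat\pi(S)$. The key observation is that $S\cap B=\varnothing$: if $x\in S\cap B$ then $f(x)\in f(B)$ and $f(x)=\varepsilon\cdot z$ for some $\varepsilon\in E$, whence $z\in\varepsilon^{-1}f(B)\subset\mathcal{B}$, a contradiction. Hence $S\subset\mathfrak{X}\times\R-B$, and by claim $(b)$ the map $\widehat\pi$ restricts to a bijection $S\to F^{-1}(\bar z)$.

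Next I would match $S$ with the indexing set of the sum in the statement. If $x\in S$, then $x\notin B$ forces $x$ to lie in the interior of a unique simplex $\mathfrak{X}_{\alpha(x)}\times\R$, and necessarily $\mu_{\alpha(x)}\neq0$ (two distinct simplices of the complex $\mathfrak{X}$ have disjoint interiors, and simplices with $\mu=0$ are contained in $B$); since the action of $E$ on $\R_+^{r_1}\times{\C^*}^{r_2}$ is free, there is a unique $\varepsilon(x)\in E$ with $f(x)=\varepsilon(x)\cdot z$, and then $\varepsilon(x)\cdot z=f(x)\in\overline C_{\alpha(x)}$. Conversely, given $\alpha$ with $\mu_\alpha\neq0$ and $\varepsilon\in E$ with $\varepsilon\cdot z\in\overline C_\alpha$, claim $(a)$ produces a unique $x\in\mathfrak{X}_\alpha\times\R$ with $f(x)=\varepsilon\cdot z$; moreover $x\notin B$, because $\varepsilon\cdot z\notin f(B)$ (once more $z\notin\mathcal{B}$), so $x\in S$, $\alpha(x)=\alpha$ and $\varepsilon(x)=\varepsilon$. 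Thus $x\mapsto(\alpha(x),\varepsilon(x))$ is a bijection from $S$ onto $\{(\alpha,\varepsilon):\mu_\alpha\neq0,\ \varepsilon\cdot z\in\overline C_\alpha\}$; in particular $S$ is finite by Lemma \ref{BoundedCard}, and the left-hand side of the lemma equals $\sum_{x\in S}\mu_{\alpha(x)}$.

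Finally I would invoke the degree formula for the proper map $F$ (Lemma \ref{Homotopy}): since $F^{-1}(\bar z)=\widehat\pi(S)$ is finite and, for each $x=\kappa\times y\in S$, the point $\kappa$ is interior to $\mathfrak{X}_{\alpha(x)}$ while the generators $w_i$ of $\overline C_{\alpha(x)}$ are $\R$-independent (this is exactly what $\mu_{\alpha(x)}\neq0$ says, because $\det R$ and $\det V_{\alpha(x)}$ are nonzero), Lemma \ref{Localdegree} gives $\mathrm{locdeg}_{\widehat\pi(x)}(F)=d_{\alpha(x)}$, so
\begin{equation*}
\deg(F)=\sum_{x\in S}\mathrm{locdeg}_{\widehat\pi(x)}(F)=\sum_{x\in S}d_{\alpha(x)}=\deg(F)\sum_{x\in S}\mu_{\alpha(x)},
\end{equation*}
the last equality using $\mu_\alpha=d_\alpha/\deg(F)$, which is precisely \eqref{mualphadefined} in view of Lemmas \ref{Homotopy} and \ref{Localdegree} (the matrices $V_\alpha,W_\alpha$ of \eqref{mualphadefined} agree, up to transposition, with those of Lemma \ref{Localdegree}, and $\deg(F)=\pm1$). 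Dividing by $\deg(F)$ yields $\sum_{x\in S}\mu_{\alpha(x)}=1$, as claimed.

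I expect the only genuinely delicate point to be the bookkeeping around the exceptional set $B$: one must verify that each $x$ obtained from claim $(a)$ really does avoid $B$, so that it lies both in the injectivity range of $\widehat\pi$ and in a simplex over which $f$ is a local homeomorphism with a well-defined local degree, and one must check the harmless compatibility between the two appearances of $V_\alpha$ and $W_\alpha$ so that $d_\alpha/\deg(F)$ is literally the sign $\mu_\alpha$ of \eqref{mualphadefined}. Beyond these and the finiteness provided by Lemma \ref{BoundedCard}, no new analytic input is needed; the whole argument is the standard local–global degree count on the manifolds $\widehat T$ and $T$.
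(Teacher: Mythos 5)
Your proof is correct and follows essentially the same route as the paper's: identify $F^{-1}(\pi(z))$ with the set of $x\in\mathfrak{X}\times\R$ satisfying $f(x)\in E\cdot z$ via claims $(a)$, $(b)$ and \eqref{pihatsurj}, use $z\notin\mathcal{B}$ to keep everything away from $B$, and apply the local--global degree formula with the local degrees $d_\alpha$ of Lemma \ref{Localdegree} and the value of $\deg(F)$ from Lemma \ref{Homotopy}. The only differences are cosmetic: you make explicit a few points the paper leaves implicit (freeness of the $E$-action, the transposition relating $W_\alpha$ to the matrix of Lemma \ref{Affinedegree}).
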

\begin{proof}
With the notation of \eqref{diagramf}, set
 $\gamma:=\pi( z)\in T$ and suppose $\widehat{x}\in F^{-1}(\gamma)\subset\widehat{T}$. Such $\widehat{x}$ exists since $F$ is surjective by $(c)$ above.  By \eqref{pihatsurj},  $\widehat{x}=\widehat{\pi}(x)$ for some $x\in\mathfrak{ X}\times\R$. But
$$
\pi\big(f(x)\big)= F\big(\widehat{\pi}(x)\big)=F(\widehat{x})=\gamma=\pi(z),
$$
shows that $f(x)=\varepsilon z$ for some $\varepsilon \in E$. As we assumed $z\notin  \mathcal{B}$,
 we have $x\notin B$. By $(b)$  above, $\widehat{x}$ uniquely determines $x$. Conversely, given   $x\in\mathfrak{ X}\times\R$ such that
$f(x)=\varepsilon z$ for some $\varepsilon \in E$, then  $\widehat{x}:=\widehat{\pi}(x)\in  F^{-1}(\gamma)$.
As $f(x)\in \overline{C}_\alpha$ for some $\alpha\in \mathfrak{I}$ with $\mu_\alpha\not=0$, Lemma \ref{BoundedCard} and
$(a)$ above show that the set of such $\widehat{x}$ is finite and that $F$ is a local homeomorphism in a neighborhood of $\widehat{x}$ . The local-global principle  \cite[Prop. 21 (9)]{DF2}, Lemma \ref{Localdegree}  and $(a)$ above  give
\begin{align*}
\deg(F)&=\deg_{\gamma}(F)=\sum_{\substack{x\in \mathfrak{X}\times\R\\ \widehat{\pi}(x)\in  F^{-1}(\gamma)}} \text{locdeg}_{\widehat{x}}(F)= \sum_{\substack{ x\in\mathfrak{ X}\times\R \\  f(x)\in E \cdot z}}  \text{locdeg}_{\widehat{\pi}(x)}(F)\\ &
= \sum_{\substack{\alpha\in \mathfrak{I}\\ \mu_\alpha\not=0}} d_\alpha\sum_{\substack{x\in \mathfrak{X}_\alpha\times\R\\ f(x)\in E \cdot z}} 1
=\sum_{\substack{\alpha\in \mathfrak{I}\\ \mu_\alpha\not=0}} d_\alpha\sum_{\substack{\varepsilon\in E\\ \varepsilon \cdot z\in \overline{C}_\alpha}} 1.
\end{align*}
\end{proof}

\subsection{End of proof of Theorem \ref{Maintheorem}}

Having proved the basic count \eqref{BasicCount} for  generic $z$, we   extend it to all $z\in  \R_+^{r_1}\times{\C^*}^{r_2}$ following Colmez's unpublished  idea for selecting   boundary  parts of  the   $\overline{C}_\alpha$'s.  It is only here that we must finally assume that $k$ has at least one real embedding.

\begin{lemm}\label{e1}  Let  $v_1,...,v_\ell\in k\subset V:=\R^{r_1}\times \C^{r_2}$ be elements of $k$ with $\ell<n:=[k:\Q]$,  assume  $r_1\ge1$, and define  $e_1\in V$ by $e_1^{(1)}=1$ and $e_1^{(j)}=0$ if $2\le j\le r_1+r_2$. Then $e_1$ is not contained in the real span of $v_1,...,v_\ell$.
\end{lemm}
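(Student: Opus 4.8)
The plan is to argue by contradiction, using the nondegeneracy of the trace form of $k/\Q$ transported to $V=\R^{r_1}\times\C^{r_2}$. Suppose $e_1=\sum_{i=1}^{\ell}c_i v_i$ with $c_i\in\R$ and $v_i\in k$. Since $\ell<n$, the $\Q$-linear span $M:=\sum_{i=1}^{\ell}\Q v_i\subseteq k$ is a proper $\Q$-subspace, of dimension at most $\ell<n$. Because the trace pairing $(x,y)\mapsto\mathrm{Tr}_{k/\Q}(xy)$ on $k$ is nondegenerate, the orthogonal complement $M^{\perp}=\{\delta\in k\mid \mathrm{Tr}_{k/\Q}(\delta\mu)=0\ \text{for all }\mu\in M\}$ has $\Q$-dimension $n-\dim_\Q M\ge n-\ell>0$, so I may fix a nonzero $\delta\in M^{\perp}$. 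In particular $\mathrm{Tr}_{k/\Q}(\delta v_i)=0$ for $1\le i\le\ell$.

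Next I would transport the trace form to $V$. Define the $\R$-bilinear form
$$
B(u,z):=\sum_{j=1}^{r_1}u^{(j)}z^{(j)}+\sum_{j=r_1+1}^{r_1+r_2}2\,\re\!\big(u^{(j)}z^{(j)}\big)\qquad(u,z\in V),
$$
which for $u,z\in k\subset V$ recovers $\mathrm{Tr}_{k/\Q}(uz)=\sum_{\sigma}\sigma(u)\sigma(z)$, the sum over all embeddings $\sigma$ of $k$. Applying $B(\delta,-)$ to the relation $e_1=\sum_i c_i v_i$ gives
$$
B(\delta,e_1)=\sum_{i=1}^{\ell}c_i\,B(\delta,v_i)=\sum_{i=1}^{\ell}c_i\,\mathrm{Tr}_{k/\Q}(\delta v_i)=0 .
$$
On the other hand, $e_1$ has all coordinates zero except the first, which equals $1$ and sits at the real place $\tau_1$; hence $B(\delta,e_1)=\delta^{(1)}=\tau_1(\delta)$. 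Therefore $\tau_1(\delta)=0$, and since $\tau_1$ is a field embedding it is injective, so $\delta=0$, contradicting $\delta\ne0$. This proves $e_1\notin\mathrm{span}_\R(v_1,\dots,v_\ell)$.

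The hypothesis $r_1\ge1$ enters precisely to ensure that the nonzero coordinate of $e_1$ lies at a real place, so that $B(\delta,e_1)$ equals $\delta^{(1)}$ with no complex-conjugate partner; at a complex place the same computation would only yield $\re(\delta^{(1)})=0$, which is insufficient. I do not expect a genuine obstacle here; the only point needing care is the normalization of $B$ at the complex places (the factor $2$ and the absence of a conjugate) so that it really extends $\mathrm{Tr}_{k/\Q}$. If one prefers to avoid naming $B$, the argument can be written purely in coordinates: read off the $r_1+2r_2$ real scalar equations contained in $e_1=\sum_i c_i v_i$, multiply the equation at each real place $j$ by $\delta^{(j)}$ and the complex equation at each complex place $j$ by $\delta^{(j)}$ together with its conjugate, and add everything; the right-hand side collapses to $\sum_i c_i\,\mathrm{Tr}_{k/\Q}(\delta v_i)=0$ while the left-hand side is $\delta^{(1)}$, giving the same contradiction.
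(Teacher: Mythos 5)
Your proof is correct and is essentially identical to the paper's: both transport the trace form $\mathrm{Tr}_{k/\Q}$ to the bilinear form $\sum_{j\le r_1}z^{(j)}w^{(j)}+2\sum_{j>r_1}\re(z^{(j)}w^{(j)})$ on $V$, pick a nonzero $x\in k$ orthogonal to the $v_i$ via nondegeneracy of the trace pairing, and derive $x^{(1)}=\langle x,e_1\rangle=0$, contradicting $x\ne0$. The only difference is cosmetic (you spell out the dimension count for $M^\perp$, which the paper leaves implicit).
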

\begin{proof}Following the proof in \cite[Lemma 9]{DF2}, define the $\R$-bilinear form on $V$,
\begin{equation}\label{trace}
\langle z,w\rangle:=\sum_{j=1}^{r_1} z^{(j)} w^{(j)}+2\sum_{j=r_1+1}^{r_1+r_2}\re\big( z^{(j)} w^{(j)}\big)\qquad\qquad(z,w\in V).
\end{equation}
Note $\langle z,w\rangle=\mathrm{Trace}_{k/\Q}(zw)$    for $z,w\in k$. As $\ell<n$, there exists $x\in k^*$ such that Trace$(xv_i)=0$ for $1\le i\le \ell$. Let $\psi:V\to \R$ be the $\R$-linear function
$\psi(w):=\langle x,w\rangle$. Thus,  $\psi(v_i)=0$, and so  $\psi(e_1)=0$ if $e_1\in\R v_1+...+\R v_\ell$. But \eqref{trace} and $r_1\not=0$ give $x^{(1)}=\langle x,e_1\rangle=\psi(e_1)=0$,  contradicting $x\in k^*$.
\end{proof}

  For any subset $C\subset V$ and $x,y\in V$, we shall say that $\overrightarrow{x ,y} $
pierces $C$  if    $\,y\in C$
  and the closed line segment  $\overrightarrow{x ,y} $ connecting $x$
  and $y$ intersects the interior of $C$. We  now characterize piercing of a cone in terms of  coordinates \cite[Lemma  14]{DF2}.

\begin{lemm}\label{PierceCone}  Let $w_0,...,w_{n-1}\in V$ be a basis of a real vector space $V$, let
\begin{equation*}%\label{Ccone}
\overline{C}=\overline{C}(w_0,...,w_{n-1}):=\big\{t_0w_0+\cdots+t_{n-1}w_{n-1}\big|\, t_i\ge0  \text{ for }0\le i\le n-1   \big\}
\end{equation*}
be the corresponding closed  polyhedral cone. In the basis $w_0,...,w_{n-1}$, write $x=\sum_{j=0}^{n-1} x_j w_j$ and $y=\sum_{j=0}^{n-1} y_j w_j$ . Then
$$
 \overrightarrow{x ,y} \mathrm{ \ pierces \  }\overline{C}  \quad  \Longleftrightarrow \quad \Big[
 y_j\ge0 \ \, ( 0\le j\le n-1)  \mathrm{  \ and \  }\big[y_j=0\Rightarrow x_j>0\big]\Big].
$$
Furthermore, if $ \overrightarrow{x ,y} $ pierces $\overline{C}$ and $s\in  \overrightarrow{x ,y}$ is an interior point of
 $\overline{C}$, then every point of $ \overrightarrow{s ,y}$ is an interior point of
 $\overline{C}$, except possibly for $y$.
\end{lemm}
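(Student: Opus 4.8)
The plan is to parametrize the segment $\overrightarrow{x,y}$ and reduce the whole statement to elementary observations about affine functions of one real variable.

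First I would record that, since $w_0,\dots,w_{n-1}$ is a basis of $V$, the linear isomorphism $\R^n\to V$ carrying the standard basis to the $w_j$ takes the positive orthant $[0,\infty)^n$ onto $\overline{C}$ and its interior $(0,\infty)^n$ onto the topological interior of $\overline{C}$; hence $z=\sum_j z_j w_j$ lies in $\overline{C}$ iff $z_j\ge0$ for all $j$, and in the interior iff $z_j>0$ for all $j$. Writing $z(\lambda):=(1-\lambda)x+\lambda y$ for $\lambda\in[0,1]$, the coordinate $z_j(\lambda)=(1-\lambda)x_j+\lambda y_j$ is an affine function of $\lambda$ with $z_j(0)=x_j$ and $z_j(1)=y_j$. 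Recall that $\overrightarrow{x,y}$ pierces $\overline{C}$ means precisely that $y\in\overline{C}$ (equivalently $y_j\ge0$ for all $j$) and that $z(\lambda)$ is an interior point of $\overline{C}$ for some $\lambda\in[0,1]$.

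For the equivalence, in the direction $(\Leftarrow)$ I would assume all $y_j\ge0$ and $y_j=0\Rightarrow x_j>0$, and then choose $\lambda<1$ close enough to $1$: for indices with $y_j>0$ one has $z_j(\lambda)\to y_j>0$ as $\lambda\to1$, while for indices with $y_j=0$ one has $z_j(\lambda)=(1-\lambda)x_j>0$ for every $\lambda<1$; since the index set is finite, a single such $\lambda$ works for all $j$ at once, so $z(\lambda)$ is an interior point of $\overline{C}$ lying on the segment and $\overrightarrow{x,y}$ pierces $\overline{C}$. Conversely $(\Rightarrow)$, piercing gives $y\in\overline{C}$, hence all $y_j\ge0$; and if some index had $y_j=0$ and $x_j\le0$, then $z_j(\lambda)=(1-\lambda)x_j\le0$ for all $\lambda\in[0,1]$, contradicting the existence of an interior point of $\overline{C}$ on the segment. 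This forces $y_j=0\Rightarrow x_j>0$.

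For the final clause, I would write the given interior point as $s=z(\lambda_0)$ with $z_j(\lambda_0)>0$ for all $j$. A point of $\overrightarrow{s,y}$ has the form $(1-t)z(\lambda_0)+t\,y$ with $t\in[0,1]$, and its $j$-th coordinate is $(1-t)z_j(\lambda_0)+t\,y_j$; for $t<1$ the first summand is strictly positive and the second is $\ge0$ (as $y\in\overline{C}$), so the point is interior to $\overline{C}$, while $t=1$ yields $y$ itself, which may lie on the boundary exactly when some $y_j=0$. There is no real obstacle here: the only points needing a word of care are the identification of the interior of $\overline{C}$ through the coordinate isomorphism (where one uses that the $w_j$ form a basis) and the appeal to finiteness of the index set in order to pick one $\lambda$ that works simultaneously for all coordinates.
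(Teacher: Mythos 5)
Your proposal is correct and follows essentially the same route as the paper: identify $\overline{C}$ and its interior with the closed and open positive orthants via the basis $w_0,\dots,w_{n-1}$, parametrize the segment affinely, and argue coordinatewise in both directions (the paper phrases the forward direction directly from an interior point $s=(1-t)x+ty$ rather than by contradiction, but the content is identical). The final clause is handled exactly as in the paper.
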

\begin{proof}
Suppose $\overrightarrow{x ,y} $ pierces  $\overline{C}$. Then $y_j\ge0$ since $y\in  \overline{C}$. Let $s\in \overrightarrow{x ,y}  $ be in the interior of $ \overline{C}$. Then $s=(1-t)x+ty$ for some $t\in[0,1]$ and    $s=\sum_j s_j w_j$ with
$s_j>0\ (0\le j\le n-1)$. But $s_j=(1-t)x_j+ty_j>0$ implies $x_j>0$ whenever $y_j=0$. Conversely, if $y_j\ge0  $, and $y_j=0\Rightarrow x_j>0$, then  for some sufficiently small positive $\epsilon$ and all $t\in [1-\epsilon,1)$, the point $s:=(1-t)x+ty$  lies in the interior of $ \overline{C}$. To prove the last claim in the lemma, assume $s_j>0\ (1\le j\le n)$.  Then $\big((1-t)s+ty\big)_j\ge (1-t)s_j>0$ if $t\in[0,1)$.
 \end{proof}
Lemma \ref{PierceCone}  shows that an equivalent definition of the cone $C_\alpha$ in \eqref{cone} is
$$
C_\alpha=\big\{ y\in\overline{C}_\alpha\big|\, \overrightarrow{  e_1 ,y} \, \text{ pierces }\, \overline{C}_\alpha \big\}.
$$
This is useful  because of the  following  ``piercing invariance" of $e_1$. 
\begin{lemm}\label{Piercee1}  Assume $r_1>0$, let  $y\in V:= \R^{r_1}\times{\C}^{r_2}$ and let $\varepsilon\in  \R_+^{r_1}\times{\C^*}^{r_2}$. Then
 $\overrightarrow{e_1 ,y} $
pierces a closed polyhedral cone $\overline{C}=\overline{C}(w_0,...,w_{n-1})\subset V$  if and only if   $\overrightarrow{\varepsilon e_1 ,y} $ pierces $\overline{C}$.
\end{lemm}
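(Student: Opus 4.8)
The plan is to observe that the hypothesis on $\varepsilon$ forces $\varepsilon e_1$ to be a strictly positive real multiple of $e_1$, so that the assertion reduces to the evident scaling invariance of the piercing criterion of Lemma \ref{PierceCone}.

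First I would unwind the product $\varepsilon e_1$, which is taken coordinate-wise in the ring $\R^{r_1}\times\C^{r_2}$. Since $e_1^{(1)}=1$ and $e_1^{(j)}=0$ for $2\le j\le r_1+r_2$, we get $(\varepsilon e_1)^{(1)}=\varepsilon^{(1)}$ and $(\varepsilon e_1)^{(j)}=0$ for $j\ge2$; that is, $\varepsilon e_1=\varepsilon^{(1)}e_1$. By the assumption $\varepsilon\in\R_+^{r_1}\times{\C^*}^{r_2}$ together with $r_1>0$, the scalar $c:=\varepsilon^{(1)}$ is a strictly positive real number. So the lemma is equivalent to the statement that, for every $c>0$, the segment $\overrightarrow{e_1,y}$ pierces $\overline{C}$ if and only if $\overrightarrow{ce_1,y}$ does.

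To finish, I would apply the coordinate characterization of Lemma \ref{PierceCone} with respect to the basis $w_0,\dots,w_{n-1}$ of $V$. Writing $e_1=\sum_{j=0}^{n-1}x_jw_j$ and $y=\sum_{j=0}^{n-1}y_jw_j$, that lemma says $\overrightarrow{e_1,y}$ pierces $\overline{C}$ exactly when $y_j\ge0$ for all $j$ and $y_j=0\Rightarrow x_j>0$. Since $ce_1=\sum_{j=0}^{n-1}(cx_j)w_j$ and $c>0$, the inequality $cx_j>0$ holds if and only if $x_j>0$, so the criterion for $\overrightarrow{ce_1,y}$ is literally the same condition. Hence the two piercing statements are equivalent.

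The only point requiring care is the bookkeeping in the first step: interpreting $\varepsilon e_1$ correctly and noting that $\varepsilon^{(1)}>0$ is precisely what the hypothesis supplies. There is no genuine obstacle beyond this, and in particular the complex places ($r_2$) play no role, since $e_1$ vanishes there.
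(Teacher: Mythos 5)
Your proof is correct and is essentially the paper's own argument: both reduce to the observation that $\varepsilon e_1=\varepsilon^{(1)}e_1$ with $\varepsilon^{(1)}>0$ and then invoke the coordinate criterion of Lemma \ref{PierceCone}. The only thing the paper adds is a one-line reduction to the case where the $w_j$ are $\R$-linearly independent (otherwise $\overline{C}$ has empty interior, nothing pierces it, and the equivalence is vacuous), a degenerate case you silently skip when you call $w_0,\dots,w_{n-1}$ a basis.
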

\begin{proof} We may assume that the $n$ generators of    $\overline{C}$ are $\R$-linearly  independent, for otherwise there is no piercing at all and the lemma is trivial.  As $\varepsilon e_1 =\varepsilon^{(1)} e_1$, where the real scalar $\varepsilon^{(1)} >0$, the lemma follows from Lemma \ref{PierceCone}.
 \end{proof}

To complete the proof of Theorem \ref{Maintheorem}  we will prove the basic count \eqref{BasicCount} in the form
 \begin{equation}\label{BasicCount2}
1 =\sum_{\substack{\alpha\in \mathfrak{I}\\ \mu_\alpha\not=0}} \mu_\alpha  \sum_{\varepsilon\in I_\alpha(y)}  1= \sum_{\substack{\alpha\in \mathfrak{I}\\ \mu_\alpha\not=0}} \mu_\alpha  \text{Card}\big( I_\alpha(y)\big)  \quad\quad(y\in \R_+^{r_1}\times{\C^*}^{r_2}),
\end{equation}
where we have set
 \begin{equation*}%\label{Jalpha1}
I_\alpha(y):=  \big\{\varepsilon\in E\big|\,  \varepsilon y\in  C_\alpha \big\}=\big\{\varepsilon\in E\big|\,  \overrightarrow{e_1, \varepsilon y}\, \text{ pierces }\, \overline{C}_\alpha      \big\}.
\end{equation*}
Note that Lemma \ref{GenericBasicCount} established \eqref{BasicCount2}    only for $y\in \R_+^{r_1}\times{\C^*}^{r_2}-\mathcal{B}$. If $\mu_\alpha=0$  we have $I_\alpha(y) =\varnothing$, for in this case $\overline{C}_\alpha $ has an empty interior.
As in  \cite[Lemma 25]{DF2},  we will prove that the $I_\alpha$ stabilize along the path from $e_1$ to $y$.
\begin{lemm}\label{Jalpha}  For $t\in[0,1]$ and $y\in \R_+^{r_1}\times{\C^*}^{r_2}$, parametrize the line-segment $\overrightarrow{e_1 ,y}$ by $P_y(t):=(1-t)e_1+ ty$. Then there
exists $T_0=T_0(y)\in(0,1)$ such that $I_\alpha(y)=I_\alpha\big(P_y(t)\big)$ for all $\alpha\in \mathfrak{I}$ and all  $t\in[T_0,1]$. Moreover, $T_0$ can be chosen so that $P_y(t)\notin\mathcal{B}$ for all $t\in[T_0,1)$.
\end{lemm}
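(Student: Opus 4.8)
The plan is to follow the stabilization strategy of \cite[Lemma 25]{DF2}: rephrase piercing so that the \emph{segment} is fixed and the \emph{cone} varies, cut down to finitely many cones using Lemma \ref{BoundedCard}, and deal with the null set $\mathcal{B}$ via Lemma \ref{e1}. First I would record, using Lemma \ref{Piercee1}, that for $\varepsilon\in E$ and $z\in V:=\R^{r_1}\times\C^{r_2}$ the segment $\overrightarrow{e_1,\varepsilon z}$ pierces $\overline{C}_\alpha$ if and only if $\overrightarrow{\varepsilon e_1,\varepsilon z}=\varepsilon\cdot\overrightarrow{e_1,z}$ does, and, since multiplication by the unit $\varepsilon$ is a linear automorphism of $V$ preserving interiors, if and only if $\overrightarrow{e_1,z}$ pierces $\varepsilon^{-1}\overline{C}_\alpha$. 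Thus $I_\alpha(P_y(t))=\{\varepsilon\in E\mid\overrightarrow{e_1,P_y(t)}\text{ pierces }\varepsilon^{-1}\overline{C}_\alpha\}$, and since $\overrightarrow{e_1,P_y(t)}=\{P_y(r)\mid 0\le r\le t\}$ is an initial sub-segment of $\overrightarrow{e_1,y}$, this family is monotone in $t$.

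Next I would use that $\R_+^{r_1}\times{\C^*}^{r_2}$ is open to choose $t_1\in(0,1)$ with $K:=P_y([t_1,1])\subset\R_+^{r_1}\times{\C^*}^{r_2}$ compact, and apply Lemma \ref{BoundedCard} to obtain a finite set $E_0\subset E$ such that for every $\varepsilon\notin E_0$, every $\alpha\in\mathfrak{I}$ and every $t\in[t_1,1]$ both $\varepsilon P_y(t)$ and $\varepsilon^{-1}P_y(t)$ lie outside $\overline{C}_\alpha$. Since piercing $\overline{C}_\alpha$ forces the endpoint into $\overline{C}_\alpha$, and since $f(B)\subset\bigcup_\alpha f(\mathfrak{X}_\alpha\times\R)=\bigcup_\alpha\overline{C}_\alpha$ by Lemma \ref{fprop}(ii), it follows that for $\varepsilon\notin E_0$ and $t\in[t_1,1]$ we have $\varepsilon\notin I_\alpha(P_y(t))$ and $P_y(t)\notin\varepsilon f(B)$. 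Hence only the finitely many pairs $(\varepsilon,\alpha)$ with $\varepsilon\in E_0$ and $\mu_\alpha\neq0$ are relevant ($I_\alpha\equiv\varnothing$ when $\mu_\alpha=0$, as $\overline{C}_\alpha$ then has empty interior).

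For the stabilization, fix such a pair, so $\varepsilon^{-1}\overline{C}_\alpha$ is the cone on the $\R$-basis $\varepsilon^{-1}w_{0,\alpha},\dots,\varepsilon^{-1}w_{n-1,\alpha}$ of $V$. If $\varepsilon\in I_\alpha(y)$, pick an interior point $P_y(t_*)$ of $\varepsilon^{-1}\overline{C}_\alpha$ on $\overrightarrow{e_1,y}$ with $t_*<1$ (possible as the interior is open and nonempty along the segment); by the final assertion of Lemma \ref{PierceCone}, $P_y(t)$ remains interior for all $t\in[t_*,1)$, so $\overrightarrow{e_1,P_y(t)}$ pierces $\varepsilon^{-1}\overline{C}_\alpha$ there, and also at $t=1$ by hypothesis, giving $\varepsilon\in I_\alpha(P_y(t))$ for $t\in[t_*,1]$. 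If $\varepsilon\notin I_\alpha(y)$, then either $y\notin\varepsilon^{-1}\overline{C}_\alpha$, which is closed, so $P_y(t)\notin\varepsilon^{-1}\overline{C}_\alpha$ for $t$ near $1$, or $\overrightarrow{e_1,y}$ misses the interior of $\varepsilon^{-1}\overline{C}_\alpha$ entirely, hence so does every initial sub-segment; either way $\varepsilon\notin I_\alpha(P_y(t))$ for $t$ near $1$. Taking $T_0'\in[t_1,1)$ to be the maximum of these finitely many thresholds yields $I_\alpha(P_y(t))=I_\alpha(y)$ for all $\alpha$ and all $t\in[T_0',1]$.

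Finally, to get $P_y(t)\notin\mathcal{B}$ for $t\in[T_0',1)$, I would describe $f(B)$ explicitly, via Lemma \ref{fprop}(ii) and the affinity of the $A_\alpha$, as a finite union of cones: the degenerate cones $\overline{C}_\alpha$ with $\mu_\alpha=0$, and, for each $\alpha$ with $\mu_\alpha\neq0$, the $n$ facet cones generated by the $(n-1)$-element subsets of $\{w_{0,\alpha},\dots,w_{n-1,\alpha}\}$ (affine images of the facets of $\mathfrak{X}_\alpha$ being convex hulls of the corresponding images of vertices). Each such cone lies in the $\R$-span of at most $n-1$ elements of $k$, hence so does $\varepsilon$ times it, since $\varepsilon w_{\ell,\alpha}\in k^*$; by Lemma \ref{e1}, $e_1$ lies in none of these spans, so the line through $e_1$ and $y$ meets each of them in at most one point (and in none if $y=e_1$). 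As only finitely many $\varepsilon\in E$ are relevant, $\{t\in[T_0',1]\mid P_y(t)\in\mathcal{B}\}$ is finite, and any $T_0\in(0,1)$ exceeding $T_0'$ and every element $<1$ of this set works. The step I expect to be the main obstacle is exactly this last one: realizing $f(B)$ concretely as a finite union of cones sitting in proper subspaces spanned by elements of $k$, so that Lemma \ref{e1} applies uniformly over all $\varepsilon\in E$; the stabilization itself is routine once Lemmas \ref{Piercee1} and \ref{PierceCone} are in hand. (When $y=e_1$, $P_y$ is constant, the identities are trivial, and $e_1\notin\mathcal{B}$ by the same use of Lemma \ref{e1}.)
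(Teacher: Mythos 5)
Your proof is correct and follows essentially the same route as the paper: the same ingredients (Lemmas \ref{Piercee1}, \ref{PierceCone}, \ref{BoundedCard} and \ref{e1}) play the same roles, including the key step of realizing $f(B)$ as a finite union of cones lying in $\R$-spans of fewer than $n$ elements of $k$ so that Lemma \ref{e1} applies after multiplying by $\varepsilon$. The only difference is organizational: you establish finiteness of the relevant pairs $(\varepsilon,\alpha)$ up front via a compact sub-segment and then argue each case directly, whereas the paper extracts the same finiteness inside two proof-by-contradiction sequence arguments.
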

\begin{proof}
Suppose $\varepsilon\in I_\alpha(y)$ for some $\alpha\in \mathfrak{I}$, \ie  $\overrightarrow{e_1,\varepsilon y}$ pierces  $\overline{C}_\alpha$. By Lemma \ref{Piercee1}, $\overrightarrow{\varepsilon e_1,\varepsilon y}$ pierces  $\overline{C}_\alpha$. Thus, for some $s=s(\alpha,\varepsilon)\in(0,1)$, the point  $(1-s)\varepsilon e_1+s\varepsilon y$ is an interior point of  $\overline{C}_\alpha$. By Lemma \ref{PierceCone},
$(1-t)\varepsilon e_1+t\varepsilon y $ is also   an interior point of  $\overline{C}_\alpha$  for $s\le t<1$. But  $\varepsilon P_y(t) =(1-t)\varepsilon e_1+t\varepsilon y  $. Hence $\varepsilon\in I_\alpha\big(P_y(t)\big)$ for $s\le t\le 1$. Thus
   $I_\alpha(y)\subset I_\alpha\big(P_y(t)\big)$ for $T_0\le t\le 1$, where $T_0:= \sup_{\alpha\in \mathfrak{I},\varepsilon\in I_\alpha(y)}\{s(\alpha,\varepsilon)\}$. Since  $\mathfrak{I}$ and $I_\alpha(y)$ are  finite sets,  we have $T_0\in (0,1)$.

 We now prove the reverse inclusion, \ie   $ I_\alpha\big(P_y(t)\big) \subset  I_\alpha(y)$ for $  t\in[1-\epsilon, 1]$ for some $\epsilon>0$. Note that $P_y(t)\in \R_+^{r_1}\times{\C^*}^{r_2}$ for $t$ near 1, as $P_y(1)=y\in \R_+^{r_1}\times{\C^*}^{r_2}$. If the inclusion claimed is false, then
there is a sequence $t_j\in(0,1)$ converging to 1,   and corresponding $\varepsilon_j\in  I_\alpha\big(P_y(t_j)\big)$ with $\varepsilon_j\notin  I_\alpha(y)$.
 Thus $\varepsilon_j P_y(t_j)\in C_\alpha$ but $\varepsilon_j y\notin C_\alpha$.  By Lemma \ref{BoundedCard},  if $\Gamma\subset  \R_+^{r_1}\times{\C^*}^{r_2}$ is a  small neighborhood of $y$,   there are finitely many $\varepsilon\in E$ such that $\varepsilon \Gamma\cap C_\alpha\not=\varnothing$. Hence the set of $\varepsilon_j$ is finite.
Passing to a subsequence, we may therefore assume $\varepsilon_j=\varepsilon$ is fixed,   $\varepsilon\in I_\alpha\big(P_y(t_j)\big),\ \varepsilon\notin I_\alpha(y)$.
Thus  $\overrightarrow{e_1,\varepsilon   P_y(t_j)}$ pierces  $\overline{C}_\alpha$.  In particular, $\varepsilon   P_y(t_j)\in\overline{C}_\alpha$.
As $P_y(t_j)$ converges to $ y$ and  $\overline{C}_\alpha\cup\{0\}$ is closed in $ \R^{r_1}\times{\C}^{r_2}$, it follows that $\varepsilon y\in \overline{C}_\alpha\cup\{0\}$. Since $\varepsilon y\not=0$, it follows that $\varepsilon y\in \overline{C}_\alpha$. As   $\overrightarrow{e_1,\varepsilon   P_y(t_j)}$ pierces  $\overline{C}_\alpha$, Lemma \ref{Piercee1} shows that   $\overrightarrow{\varepsilon  e_1,\varepsilon   P_y(t_j)}$ pierces  $\overline{C}_\alpha$, \ie contains an interior point of $\overline{C}_\alpha$. But
$$
\overrightarrow{\varepsilon  e_1,\varepsilon   P_y(t_j)}=\varepsilon \cdot\Big( \overrightarrow{ e_1,   P_y(t_j)}\Big)\subset\varepsilon\cdot\Big( \overrightarrow{ e_1,   P_y(1)}\Big)=\varepsilon\cdot\big( \overrightarrow{ e_1,  y}\big)= \overrightarrow{ \varepsilon e_1, \varepsilon y}.
$$
 Hence $ \overrightarrow{ \varepsilon e_1, \varepsilon y}$  contains an interior point of $\overline{C}_\alpha$. As we have already shown that  $\varepsilon y\in \overline{C}_\alpha$,  we have proved that $ \overrightarrow{ \varepsilon e_1, \varepsilon y}$  pierces $\overline{C}_\alpha$. Hence, again by  Lemma \ref{Piercee1},   $ \overrightarrow{  e_1, \varepsilon y}$ pierces  $\overline{C}_\alpha$. Thus $\varepsilon\in I_\alpha(y)$, contradicting our choice of $\varepsilon$.

To prove the final claim in the lemma, suppose it is false. Then $P_y(t_j)\in \mathcal{B}$ for a sequence  $t_j\in(0,1)$ converging to 1. From the definition \eqref{generic} of  $\mathcal{B}$ we see that
there are $\varepsilon_j\in E$ and $\alpha_j\in \mathfrak{I}$ such that $P_y(t_j)\in\varpsilon_j f(\partial X_{\alpha_j}\times\R)$ if $\mu_{\alpha_j}\not=0$ or  $P_y(t_j)\in\varpsilon_j f( X_{\alpha_j}\times\R)$ if $\mu_{\alpha_j}=0$ . Thus $\varpsilon_j^{-1}P_y(t_j)\in \overline{C}_{\alpha_j}:=f( X_{\alpha_j}\times\R)$.
  Hence the $\varepsilon_j$ belong to a finite set. Passing to a subsequence we can assume that the
$\varepsilon_j=\varepsilon$ and $\alpha_j=\alpha$ are fixed. If $\mu_\alpha=0$, the cone generators $w_0,...,w_{n-1}$ in \eqref{fcone} are $\R$-linearly dependent. Hence
$f(\mathfrak{X}_\alpha\times\R)=\overline{C}_\alpha$ is contained in the $\R$-span $H_\alpha$ of $\ell$ elements of the number field $k$, with $\ell<n$. Thus, for  two distinct values $t_j$ we have
$P_y(t_j)\in \varpsilon H_\alpha$. In particular, the straight line connecting both of these points lies in $\varpsilon H_\alpha$. As the  $P_y(t_j)\in  \overrightarrow{e_1,\varepsilon  y}$, this line includes $e_1$. Thus $e_1\in \varpsilon H_\alpha$, contradicting Lemma \ref{e1}. If $\mu_\alpha\not=0$,
$$
f(\partial X_{\alpha}\times\R)\subset \bigcup_{i=0}^{n-1} H_{\alpha,i}, \qquad\qquad H_{\alpha,i}:=\sum_{\substack{0\le m\le n-1\\m\not=i}}\R w_m.
$$
Passing again to a subsequence of the $t_j$, we may assume $P_y(t_j)\in \varepsilon H_{\alpha,i}$ for a fixed $i$. This again contradicts Lemma \ref{e1}.
\end{proof}

We can now finish the proof of Theorem \ref{Maintheorem}, \ie the basic count  \eqref{BasicCount} for all $y\in \R_+^{r_1}\times{\C^*}^{r_2}$, as reformulated in \eqref{BasicCount2}. Using Lemma \ref{Jalpha}, we have $I_\alpha\big(P_y(t)\big)=I_\alpha(y)$ and $P_y(t)\notin\mathcal{B}$ for some $t\in(0,1)$.   Hence, by  Lemma \ref{GenericBasicCount}, 
 $$
 \sum_{\substack{\alpha\in \mathfrak{I}\\ \mu_\alpha\not=0}} \mu_\alpha  \sum_{\varepsilon\in I_\alpha(y)}  1\ =
\ \sum_{\substack{\alpha\in \mathfrak{I}\\ \mu_\alpha\not=0}}  \mu_\alpha  \sum_{\varepsilon\in I_\alpha(P_y(t))}  1\ =\ 1.
$$

\end{document}